\documentclass{amsart}
\usepackage{amssymb,amsmath,amsfonts,amsthm,amsxtra,afterpage}
\usepackage[dvips,all]{xy}
\usepackage{xcolor}
\usepackage{mdframed}
\usepackage[normalem]{ulem}
\usepackage{enumerate}
\usepackage{enumerate}

\setlength{\marginparwidth}{2cm}


\usepackage{todonotes}

\newtheorem{thm}{Theorem}[section]
\newtheorem{prop}[thm]{Proposition}
\newtheorem{lem}[thm]{Lemma}

\newtheorem{cor}[thm]{Corollary}

\theoremstyle{definition}
\newtheorem{dfn}[thm]{Definition}
\newtheorem{eg}[thm]{Example}

\theoremstyle{remark}
\newtheorem{rmk}[thm]{Remark}

\numberwithin{equation}{section}

\renewenvironment{proof}[1][\proofname]{\begin{trivlist}\item[\hskip \labelsep \itshape \bfseries #1{}\hspace{2ex}]}
{\qed\end{trivlist}}


\begin{document}
\title[Invariant Theory of $ \mathbb{G}_{a} $-Actions.]{On the Invariant Theory of $ \mathbb{G}_{a} $-Actions from a Geometric Perspective.}
\author{Stephen Maguire}
\email{maguire2@illinois.edu}

\begin{abstract}
    In this paper we give a strict classification of $ \mathbb{G}_{a} $-representations.  This is done through the notion of a $ c(t) $-pair.  Namely if $ \operatorname{Spec}(A) $ is a $ \mathbb{G}_{a} $-variety with action $ \beta $, then a $ c(t) $-pair is a pair of elements $ (g,h) $ such that $ g(t_{0} \ast x) = g(x)+c(t_{0}) h(x) $.  This allows us to describe exactly when an affine, $ \mathbb{G}_{a} $-stable, sub-variety $ D(h) $ is a trivial bundle over $ D(h)//\mathbb{G}_{a} $.  If $ \operatorname{Spec}(A) $ is a $ \mathbb{G}_{a} $-variety, we define the large pedestal ideal $ \mathfrak{P}_{g}(A) $ and the pedestal ideal $ \mathfrak{P}(A) $.  If $ \beta: \mathbb{G}_{a} \to \operatorname{GL}(\mathbf{V}) $ is a $ \mathbb{G}_{a} $-representation, then we classify such a representation on whether
    \begin{itemize}
        \item[a)] the large pedestal ideal $ \mathfrak{P}_{g}(S_{k}(\mathbf{V}^{\ast})) $ is equal to zero,
        \item[b)] the large pedestal ideal is non-zero, but the pedestal ideal is equal to zero, or
        \item[c)] the pedestal ideal is non-zero.
    \end{itemize}
    In case a) the ring of invariants is simply $ S_{k}\left((\mathbf{V}^{\ast})^{\mathbb{G}_{a}}\right) $ and these representations are un-interesting from the perspective of classical invariant theory.  Case c) is the nicest case in which, after a suitable modification, there is an open affine sub-variety $ U $ such that $ U $ is a trivial $ \mathbb{G}_{a} $-bundle over its image in $ \operatorname{Spec}(S_{k}(\mathbf{V}^{\ast})^{\mathbb{G}_{a}}) $.  If $ \operatorname{Spec}(A) $ is a $ \mathbb{G}_{a} $-variety, and $ \mathfrak{P}(A) $ is non-zero, then $ \operatorname{Spec}(A) $ is a quasi-principle $ \mathbb{G}_{a} $-variety.  Case b) is in some sense ``a mixture'' of cases a) and c).
    
    In the process, we generalize van den Essen's algorithm so that it works for quasi-principle $ \mathbb{G}_{a} $-actions over fields $ k $ of positive characteristic.  We also show how we can compute the ring $ S_{k}(\mathbf{V}^{\ast})^{\mathbb{G}_{a}}_{h(X)} $ for sufficient $ h(X) \in \mathfrak{P}_{g}(S_{k}(\mathbf{V}^{\ast})) $ in case b).
\end{abstract}

\maketitle
\section{Introduction}
There are many reasons to be interested in $ \mathbb{G}_{a} $-actions.  One such reason is the Classical Zariski conjecture, which asks ``if a $ \mathbb{A}^{1}_{k} \times Y \cong \mathbb{A}^{n}_{k} $, then is $ Y $ isomorphic to $ \mathbb{A}^{n-1}_{k} $?''  Since $ \mathbb{G}_{a} $ acts on $ \mathbb{A}^{1}_{k} \times Y $ via the natural action on the first component and a trivial action on $ Y $, one may answer the question in the affirmative if one can show that whenever $ \mathbb{G}_{a} $-acts on $ \operatorname{Spec}(k[x_{1},\dots,x_{n}]) $ so that it is a trivial $ \mathbb{G}_{a} $-bundle over its image in $ \operatorname{Spec}(k[x_{1},\dots,x_{n}]^{\mathbb{G}_{a}}) $, then $ \operatorname{Spec}(k[x_{1},\dots,x_{n}]^{\mathbb{G}_{a}}) \cong \mathbb{A}^{n-1}_{k} $.  This question is closely related to the Jacobian conjecture, and provides one reason to be interested in $ \mathbb{G}_{a} $-actions.

Another application of $ \mathbb{G}_{a} $-actions is in determining whether a variety is separably ruled or separably uniruled.  An $ n $-dimensional variety $ Z $ is separably uniruled if there is an $ n-1 $-dimensional variety $ Y $ such that there exists a dominant, separable, generically finite, rational map $ \phi: \mathbb{P}^{1}_{k} \times Y \dashrightarrow Z $ and it is separably ruled if the underlying rational map is a birational map.  If $ Z $ is proper over $ k $ and $ Y $ is complete, then there is a non-trivial open sub-variety $ U \subseteq Y $ and $ V \subseteq Z $ such that $ \phi: \mathbb{A}^{1}_{k} \times U \to V $ is a dominant, \'{E}tale morphism.  In many circumstances this is equivalent to the statement that there is an \'{E}tale cover $ W $ of $ V $ such that $ W $ is a trivial $ \mathbb{G}_{a} $-bundle.

A final application of $ \mathbb{G}_{a} $-actions is ``the Weitzenb\"{o}ck conjecture''.  Weitzenb\"{o}ck's theorem says that if $ \beta: \mathbb{G}_{a} \to \operatorname{GL}(\mathbf{V}) $ is a linear representation of $ \mathbb{G}_{a} $ over a field $ L $ of characteristic zero, and $ \{x_{1},\dots,x_{n}\} $ is a basis of $ \mathbf{V}^{\ast} $, then $ L[x_{1},\dots,x_{n}]^{\mathbb{G}_{a}} $ is a finitely generated $ L $-algebra.  While Roland Weitzenb\"{o}ck may not have conjectured whether Hilbert's 14th problem has an affirmative answer when the characteristic of the base field is $ p>0 $, we dub it ``the Weitzenb\"{o}ck conjecture'' for simplicity.

In characteristic zero, the existence of a $ \mathbb{G}_{a} $-action $ \beta: \mathbb{G}_{a} \times \operatorname{Spec}(A) \to \operatorname{Spec}(A) $ is equivalent to the existence of a locally nilpotent derivation $ \delta \in T_{A/k} $.  In this case, the co-action $ \beta^{\sharp} $ sends an element $ a $ to $ \exp(t \delta)(a)= \sum_{j=0}^{\infty} \left(\delta^{j}(a)/j!\right)t^{j} $ and the ring of invariants is equal to the kernel of $ \delta $.  In \cite[Chapter 1, First Principles, pg. 10]{FreudenbergDerivBook} Gene Freudenberg defines the plinth ideal of $ A^{\mathbb{G}_{a}} $ to be the ideal generated by elements of $ A^{\mathbb{G}_{a}} \cap \delta(A) $.  Freudenberg also defines a slice to be an element $ a \in A $ such that $ \delta(a) = 1 $.  The reason the term ``slice'' is used is that if such an element exists, then $ \operatorname{Spec}(A) \cong \mathbb{G}_{a} \times \operatorname{Spec}(A^{\mathbb{G}_{a}}) $.  Freudenberg defines an element $ a \in A $ to be a ``local slice'' if $ \delta(a) \ne 0 $ and $ a \in \ker(\delta^{2}) $.  In this instance, $ a/\delta(a) $ is a slice on $ \operatorname{Spec}(A_{\delta(a)}) $.  So the plinth ideal yields valuable information about local triviality of $ \mathbb{G}_{a} $-varieties.

In positive characteristic, a $ \mathbb{G}_{a} $-action $ \beta: \mathbb{G}_{a} \times \operatorname{Spec}(A) \to \operatorname{Spec}(A) $ exists if and only if there exists a locally finite, iterative, higher derivation, $ \{\phi_{j}\}_{j \in \mathbb{N}_{0}} $.  In positive characteristic a locally finite, iterative, higher derivation $ \{\phi_{j}\}_{j \in \mathbb{N}_{0}} $ is not uniquely determined by a derivation $ \phi_{1} $.  Also, up to scalar multiplication, there is only one additive polynomial in $ L[t] $ if $ L $ is a field of characteristic zero.  However, if $ k $ is a field of positive characteristic, then the additive polynomials of the ring $ k[t] $ form a non-commutative, Euclidean ideal domain known as the Ore ring.  As a result, it becomes remarkably more difficult to formulate a local triviality criterion.  If $ \{\phi_{j}\}_{j \in \mathbb{N}_{0}} $ is a locally finite, iterative, higher derivation on $ A $, then there is a corresponding action $ \beta $ whose co-action $ \beta^{\sharp} $ sends $ a \in A $ to $ \sum_{j=0}^{\infty} \phi_{j}(a)t^{j} $.  In \cite{Kuroda} Shigeru Kuroda defines $ \deg_{\phi}(a) $ and $ \operatorname{lc}_{\phi}(a) $ to be the degree and leading coefficient of $ \beta^{\sharp}(a) $ as a polynomial in $ \operatorname{Frac}(A)[t] $.  Kuroda then defines a local slice to be an element $ a \in A \setminus A^{\mathbb{G}_{a}} $ such that $ \deg_{\phi}(a) $ is minimal for all such elements.

Let $ b(t),c(t) $ be two $ k $-linearly independent, additive polynomials such that $ \deg(b(t))<\deg(c(t)) $.  If $ \beta: \mathbb{G}_{a} \to \operatorname{GL}(\mathbf{V}) $ and $ \{x_{1},x_{2},x_{3}\} $ is a dual basis of the three dimensional, vector space $ \mathbf{V} $, such that $ \beta^{\sharp} $ is described below:
\begin{align}
    x_{1} & \mapsto x_{1}, \notag \\
    x_{2} & \mapsto x_{2}, \notag \\
    x_{3} & \mapsto x_{3}+b(t)x_{1}+c(t)x_{2} \label{E:1}
\end{align}
then even though $ x_{3} $ is a local slice under Kuroda's terminology, the sub-variety $ D(x_{2}) $ is not isomorphic to $ \mathbb{G}_{a} \times \operatorname{Spec}(k[x_{1},x_{2}]_{x_{2}}) $.  In particular, the variety $ D(x_{2}) $ cannot be endowed with the structure of a trivial $ \mathbb{G}_{a} $-bundle even though $ D(x_{2}) $ is isomorphic to $ \operatorname{Spec}(k[x_{3}/x_{2}]) \times \operatorname{Spec}(k[x_{1},x_{2}]_{x_{2}}) $.  The underlying problem is that the isomorphism is not $ \mathbb{G}_{a} $-equivariant.  As a result, the term ``local slice'' is a misnomer.

For this reason we create the notion of a $ c(t) $-pair.  If $ \beta: \mathbb{G}_{a} \times \operatorname{Spec}(A) \to \operatorname{Spec}(A) $ is an action of $ \mathbb{G}_{a} $ over an algebraically closed field $ k $ of positive characteristic $ p>0 $, and $ c(t) $ is an additive polynomial of $ k[t] $, then a $ c(t) $-pair is a pair of elements $ (g,h) $ such that $ g(t_{0} \ast x) = g(x)+c(t_{0})h(x) $ for any closed point $ (t_{0},x) \in \mathbb{G}_{a} \times \operatorname{Spec}(A) $.  A $ b(t) $-pair is called a quasi-principle pair if $ \mathbf{ker}(b(t)) $ stabilizes all of $ \operatorname{Spec}(A) $ and is a principle pair if $ b(t) $ is equal to $ t $.  The existence of a principle pair is integrally related to local triviality.  If a $ b(t) $-pair is a quasi-principle pair, then one may often assume that $ b(t) $ is equal to $ t $ by replacing $ \mathbb{G}_{a} $ by $ \mathbb{G}_{a}//\mathbf{ker}(b(t)) \cong \mathbb{G}_{a} $.  We define the large pedestal ideal $ \mathfrak{P}_{g}(A) $ to be the ideal of $ A $ generated by $ \{ h $ such that there exists a non-zero, additive polynomial $ c(t) $ and a $ g \in A $ such that $ (g,h) $ is a $ c(t) $-pair $ \} $.  We define the pedestal ideal $ \mathfrak{P}(A) $ to be the ideal generated by $ \{ 0 $ and all $ h $ such that there exists a non-zero, additive polynomial $ b(t) $ and a $ g \in A $ such that $ (g,h) $ is a quasi-principle $ b(t) $-pair $ \} $.

We describe the theory of the pedestal ideal in intricate detail in this paper and explicitly categorize the types of pathologies which occur in representations like the one described in ~\eqref{E:1}.  Namely we show that the only time such pathologies may occur for an action $ \beta: \mathbb{G}_{a} \times \operatorname{Spec}(A) \to \operatorname{Spec}(A) $ is when $ \mathfrak{P}_{g}(A) $ is equal to zero.  Let $ k $ be an algebraically closed field of characteristic greater than two and let $ c_{1}(t),c_{2}(t),c_{3}(t) $ be $ k $-linearly independent, additive polynomials such that $ c_{1}(t) $ is not equal to $ t $.  If $ \{x_{1},\dots,x_{5}\} $ is a dual basis of $ \mathbf{V} $ and $ \beta: \mathbb{G}_{a} \to \operatorname{GL}(\mathbf{V}) $ is the five dimensional, representation with co-action below:
\begin{align}
    x_{1} & \mapsto x_{1}, \notag \\
    x_{2} & \mapsto x_{2}, \notag \\
    x_{3} & \mapsto x_{3}+c_{1}(t)x_{1}, \notag \\
    x_{4} & \mapsto x_{4}+c_{1}(t)x_{2}, \notag \\
    x_{5} & \mapsto x_{5}+c_{1}(t)(x_{3}+x_{4})+(c_{1}(t)^{2}/2+c_{2}(t))(x_{1})+(c_{1}(t)^{2}/2+c_{3}(t))x_{2}, \label{E:89}
\end{align}
then there is no open sub-variety $ U \subseteq \mathbf{V} $ such that $ U $ is a trivial bundle over $ U//\mathbb{G}_{a} $, but $ \mathfrak{P}_{g}(k[X]) $ is non-zero.  In ~\eqref{E:89} and representations with similar pathology, the large pedestal ideal is non-zero while the pedestal ideal is equal to zero.  We describe in Theorem ~\ref{T:trivialPedForm} (see page \pageref{T:trivialPedForm}) when this happens for representations, and we show how to locally compute the ring of invariants.  For example, it is possible to compute $ k[X]^{\mathbb{G}_{a}}_{x_{1}} $ and $ k[X]^{\mathbb{G}_{a}}_{x_{2}} $ provided adequate knowledge of the invariant theory of finite groups whose order is a power of $ p $.

If a $ \mathbb{G}_{a} $ action $ \beta: \mathbb{G}_{a} \times \operatorname{Spec}(A) \to \operatorname{Spec}(A) $ is locally principle, then there is always a principle pair $ (g,h) $.  In \cite[Chapter 1, Section 2, pg. 15]{FreudenbergDerivBook} Freudenberg defines a homomorphism from $ A_{h} \to A^{\mathbb{G}_{a}}_{h} $ as follows.  If $ \operatorname{Spec}(A) $ is a $ \mathbb{G}_{a} $-variety over a field $ L $ of characteristic zero with action $ \beta $ and $ (g,h) $ is a principle pair, then this map sends $ a $ to $ \beta^{\sharp}(a) \mid_{t=-g/h} $.  We show that if $ \operatorname{Spec}(A) $ is a $ \mathbb{G}_{a} $-variety over a field $ k $ of positive characteristic, and $ (g,h) $ is a principle pair, then the map which sends $ a $ to $ \beta^{\sharp}(a) \mid_{t=-g/h} $ is also a ring homomorphism from $ A_{h} $ to $ A_{h}^{\mathbb{G}_{a}} $.
\section{Basic Definitions}
\begin{dfn} \label{D:ore}
    Let $ k $ be a field of positive characteristic $ p > 0 $.  The \emph{Ore ring} is the ring whose underlying set is the set of additive polynomials of $ k[t] $.  Addition in the Ore ring is point-wise addition while multiplication is composition.
\end{dfn}\
There is another way to view the Ore ring.  If $ F^{\ell} $ corresponds to the $ \ell $-th iterate of the Frobenius morphism, then $ t^{p^{\ell}} = F^{\ell}(t) $.  The underlying set of the Ore ring is $ k[F] $, and addition is the same as in the polynomial ring $ k[F] $.  However, $ aF^{j} \cdot (b F^{i}) = a b^{p^{j}} F^{i+j} $.  If $ b(F) \in k[F] $, then the map which sends $ b(F) $ to $ b(F) \circ t $ is an isomorphism between our two conceptualizations of the Ore ring.  We shall denote the Ore ring by $ \mathfrak{O} $.

By a proof analogous to the proof used to show that $ k[t] $ is a Euclidean Ideal domain, it is possible to show that there is a right division algorithm for the Ore ring.  If $ k $ is a perfect field, then there is also a left division algorithm (see \cite[Proposition 1.6.2, Proposition 1.6.5]{Goss} and \cite[Chapter I, Theorem 1, pg. 562]{Ore}).  As a result, the Ore ring is a non-commutative Euclidean ideal domain.

If $ k $ is algebraically closed, then $ k $ is perfect.  So for $ k $ algebraically closed, the Ore ring is a non-commutative Euclidean Ring, and so for any two additive polynomials $ b_{1}(t) $ and $ b_{2}(t) $ a greatest common divisor exists and is the same regardless of whether the ideal generated by $ b_{1}(t) $ and $ b_{2}(t) $ is a left or right ideal.  
    
The greatest common divisor of two additive polynomials $ b_{1}(t) $ and $ b_{2}(t) $ in $ \mathfrak{O} $ is not the same as their greatest common divisor in $ k[t] $.  We will denote the greatest common divisor of $ b_{1}(t) $ and $ b_{2}(t) $ in $ \mathfrak{O} $ by $ \mathfrak{O}(b_{1}(t),b_{2}(t)) $.  For two additive polynomials $ c_{1}(t) $ and $ c_{2}(t) $, if $ b(t) $ is equal to $ \mathfrak{O}(c_{1}(t),c_{2}(t)) $, then there are additive polynomials $ d_{1}(t) $ and $ d_{2}(t) $ such that:
\begin{align*}
    d_{1}(b(t)) &= c_{1}(t) \\
    d_{2}(b(t)) &= c_{2}(t).
\end{align*}
This is not necessarily the case for the greatest common divisor of $ c_{1}(t) $ and $ c_{2}(t) $ in $ k[t] $.
\begin{eg} \label{EG:boft}
    Let $ k $ be an algebraically closed field, and $ b(t) $ an additive polynomial.  As a variety, $ \mathbf{ker}(b(t)) $ is equal to $ \operatorname{Spec}(k[t]/\langle b(t) \rangle) $.  It inherits the Hopf algebra structure from $ \mathbb{G}_{a} $.  One should note that if $ b(t) $ contains no linear term then $ \mathbf{ker}(b(t)) $ is a non-reduced, affine group scheme.  Note that one also obtains an endomorphism of $ \mathbb{G}_{a} $ from the ring homomorphism which sends $ t $ to $ b(t) $.  From this we obtain the following exact sequence:
    \begin{equation*}
    \xymatrix{
        0 \ar[r] & \mathbf{ker}(b(t)) \ar[r] & \mathbb{G}_{a} \ar[r]^{b(t)} & \mathbb{G}_{a} \ar[r] & 0
        }.
    \end{equation*}
\end{eg}
\begin{lem} \label{L:gaPolys}
    Let $ \beta: \mathbb{G}_{a} \to \operatorname{GL}(\mathbf{V}) $ be a linear representation of dimension $ n $ over an algebraically closed field $ k $, and let $ \{x_{1},\dots,x_{n}\} $ be an upper triangular basis of $ \mathbf{V}^{\ast} $, i.e, a basis such that $ \beta^{\sharp}(x_{i}) \in k[x_{1},\dots,x_{i}][t] $.  Let $ q_{i,j}(t) \in k[t] $ be polynomials such that $ \beta^{\sharp}(x_{i}) = x_{i}+\sum_{j=1}^{i-1} q_{i,j}(t)x_{j} $ for $ i>1 $.  If we denote $ t \otimes 1 $ by $ t_{1} $ and $ 1 \otimes t $ by $ t_{2} $, then $ q_{i,j}(t_{1}+t_{2})-q_{i,j}(t_{1})-q_{i,j}(t_{2}) =\sum_{s=j+1}^{i-1} q_{i,s}(t_{1})q_{s,j}(t_{2}) $ for $ j<i-1 $ and $ q_{i,i-1}(t) $ is an additive polynomial.
\end{lem}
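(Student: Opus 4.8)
The plan is to read off the stated identities from associativity of the action. Since $\beta$ is a linear representation, $A = S_{k}(\mathbf{V}^{\ast}) = k[x_{1},\dots,x_{n}]$, and since $k$ is algebraically closed the co-action $\beta^{\sharp}$ is determined by its effect on closed points. The action axiom $(t_{1}+t_{2})\ast x = t_{1}\ast(t_{2}\ast x)$ therefore amounts to the statement that, for every polynomial function $a$,
\[
a\bigl((t_{1}+t_{2})\ast x\bigr)\;=\;a\bigl(t_{1}\ast(t_{2}\ast x)\bigr)
\]
as an identity in $k[x_{1},\dots,x_{n},t_{1},t_{2}]$: the left-hand side is $\beta^{\sharp}(a)$ with $t$ set equal to $t_{1}+t_{2}$, and the right-hand side is $\beta^{\sharp}(a)|_{t=t_{1}}$ after each coordinate $x_{m}$ occurring in it has been replaced by $\beta^{\sharp}(x_{m})|_{t=t_{2}}$.

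First I would apply this with $a = x_{i}$. The left-hand side is simply $x_{i}+\sum_{j<i}q_{i,j}(t_{1}+t_{2})\,x_{j}$. For the right-hand side, substituting $x_{m}\mapsto x_{m}+\sum_{s<m}q_{m,s}(t_{2})\,x_{s}$ into $x_{i}+\sum_{j<i}q_{i,j}(t_{1})\,x_{j}$ and expanding gives
\[
x_{i}\;+\;\sum_{s<i}q_{i,s}(t_{2})\,x_{s}\;+\;\sum_{j<i}q_{i,j}(t_{1})\Bigl(x_{j}+\sum_{s<j}q_{j,s}(t_{2})\,x_{s}\Bigr).
\]

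Next I would compare coefficients. The $x_{m}$ are part of a polynomial coordinate system, hence linearly independent over $k[t_{1},t_{2}]$, so the coefficients of $x_{\ell}$ on the two sides must agree for each $\ell$. For $\ell<i$ the terms in $x_{\ell}$ on the right are $q_{i,\ell}(t_{1})$, $q_{i,\ell}(t_{2})$, and $q_{i,j}(t_{1})\,q_{j,\ell}(t_{2})$ for each $j$ with $\ell<j<i$, so (renaming that index) one obtains
\[
q_{i,\ell}(t_{1}+t_{2})\;=\;q_{i,\ell}(t_{1})+q_{i,\ell}(t_{2})+\sum_{s=\ell+1}^{i-1}q_{i,s}(t_{1})\,q_{s,\ell}(t_{2}),
\]
which is precisely the asserted recursion when $\ell<i-1$. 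When $\ell=i-1$ the sum is empty, so $q_{i,i-1}(t_{1}+t_{2})=q_{i,i-1}(t_{1})+q_{i,i-1}(t_{2})$; since a polynomial satisfying this Cauchy-type identity necessarily has zero constant term and is an additive polynomial by definition, this settles the last claim.

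I do not expect a genuine obstacle: the computation is short, and the only points requiring care are the index bounds in the double sum and the order in which the two parameters are composed — composing in the opposite order produces the same identities with $t_{1}$ and $t_{2}$ interchanged, which is harmless because $q_{i,\ell}(t_{1}+t_{2})$ is symmetric in $t_{1}\leftrightarrow t_{2}$. It is worth remarking that the normalization built into the hypothesis, namely that the coefficient of $x_{i}$ in $\beta^{\sharp}(x_{i})$ is exactly $1$, is automatic: if $a_{i,i}(t)$ denotes that coefficient, then $a_{i,i}(0)=1$, and the same coefficient comparison (using upper-triangularity) gives $a_{i,i}(t_{1}+t_{2})=a_{i,i}(t_{1})\,a_{i,i}(t_{2})$, whence comparing leading coefficients in $t_{1}$ forces $a_{i,i}\equiv1$; so this part of the hypothesis merely records that $\mathbb{G}_{a}$ acts unipotently.
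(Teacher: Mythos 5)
Your proof is correct and is essentially the paper's argument: both derive the identity by expanding the associativity of the action in two ways and comparing coefficients of each $x_\ell$, with the paper phrasing the expansion via the coassociativity diagram $(\operatorname{id}\otimes\mu^{\sharp}_{\mathbb{G}_a})\circ\beta^{\sharp}=(\beta^{\sharp}\otimes\operatorname{id})\circ\beta^{\sharp}$ and you phrasing it pointwise on closed points, which is equivalent here. The closing remark about $a_{i,i}\equiv 1$ being forced by upper-triangularity is a nice aside not in the paper, but the core proof matches.
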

\begin{proof}
    Because $ \beta $ is an action
    \begin{multline} \label{E:42}
        x_{i}+\sum_{j=1}^{i-1} q_{i,j}(t_{1}+t_{2})x_{j} = (\operatorname{id}_{k[X]} \otimes \mu_{\mathbb{G}_{a}}^{\sharp})\circ \beta^{\sharp}(x_{i}) \\
        = (\beta^{\sharp} \otimes \operatorname{id}_{k[t]}) \circ \beta^{\sharp}(x_{i}) \\
        = (\beta^{\sharp} \otimes \operatorname{id}_{k[t]})\left(x_{i}+\sum_{s=1}^{i-1} q_{i,s}(t)x_{s}\right) \\
        = x_{i}+\sum_{s=1}^{i-1} q_{i,s}(t_{1})\left(x_{s}+\sum_{u=1}^{s-1} q_{s,u}(t_{2})x_{u}\right) + \sum_{\ell=1}^{i-1} q_{i,\ell}(t_{2})x_{\ell}.
    \end{multline}
    The lemma now follows by equating the coefficient of $ x_{j} $ in both sides of ~\eqref{E:42}.
\end{proof}
\begin{dfn} \label{D:aPair}
    Let $ b(t) $ be an additive polynomial of $ k[t] $, and let $ \operatorname{Spec}(A) $ be a $ \mathbb{G}_{a} $-variety with action $ \beta $.  If there are elements $ g \in A $ and $ h \in A^{\mathbb{G}_{a}} $, such that $ \beta^{\sharp}(g) = g+b(t) h $, then $ (g, h) $ is \emph{a $ b(t) $-pair}.  Another way to say this is that if $ x \in \operatorname{Spec}(A) $ and $ t_{0} \in \mathbb{G}_{a} $, then
    \begin{equation*}
        g(t_{0} \ast x) = g(x)+b(t_{0})h(x).
    \end{equation*}
    If $ b(t) $ is equal to $ t $, then $ (g,h) $ \emph{is a principle pair}.
\end{dfn}
\begin{dfn} \label{D:principleAPair}
    If $ \operatorname{Spec}(A) $ is a $ \mathbb{G}_{a} $-variety over a field $ k $ with action $ \beta $, then a $ b(t) $-pair $ (g,h) $ is \emph{a quasi-principle $ b(t) $-pair} if $ \mathbf{ker}(b(t)) $ acts trivially on all of $ \operatorname{Spec}(A) $.
\end{dfn}
\begin{dfn} \label{D:pseudoPair}
    Let $ H $ be a sub group scheme of $ \mathbb{G}_{a} $, and let $ \operatorname{Spec}(A) $ be a $ \mathbb{G}_{a} $-variety.  If $ g $ is an element of $ A $, then let $ B $ be the ring generated by all $ H $-translates of $ g $.  The variety $ \operatorname{Spec}(B) $ is an $ H $-variety.  There is a representation $ \gamma: H \to \operatorname{GL}(\mathbf{V}) $ and an $ H $-equivariant, closed immersion $ \tau: \operatorname{Spec}(B) \to \mathbf{V} $.  If $ \mathbf{V} $ has the additional property that the image of $ \tau $ is not contained in any affine hyperplane of $ \mathbf{V} $, then the \emph{variance of $ g $ by $ H $} is the dimension of $ \mathbf{V} $.  We will use \emph{$ \operatorname{var}^{H}(g) $} to denote the variance of $ g $ by $ H $.  If $ H $ is $ \mathbb{G}_{a} $, then we will simply speak of the variance of $ g $ and denote it by $ \operatorname{var}(g) $.
\end{dfn}
\begin{cor} \label{Cor:pairSubSpace}
    Let $ \operatorname{Spec}(A) $ be a $ \mathbb{G}_{a} $-variety with action $ \beta $.  If $ g \in A $, then there is a $ b(t) \in \mathfrak{O} $ and an $ h \in A^{\mathbb{G}_{a}} $ such that $ (g,h) $ is a $ b(t) $-pair if and only if the variance of $ g $ is two.
\end{cor}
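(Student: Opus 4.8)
The plan is to reduce everything to a linear-algebra statement about two-dimensional representations of $\mathbb{G}_{a}$, using the dictionary built into Definition~\ref{D:pseudoPair}. Write $\beta^{\sharp}(a)=\sum_{j\geq 0}\phi_{j}(a)t^{j}$, and for $g\in A$ let $W_{g}=\operatorname{span}_{k}\{\phi_{j}(g):j\geq 0\}\subseteq A$ be the smallest $\mathbb{G}_{a}$-stable $k$-subspace containing $g$ (finite dimensional by local finiteness of the higher derivation); this is precisely the $k$-span of the $\mathbb{G}_{a}$-translates of $g$, so the ring $B$ of Definition~\ref{D:pseudoPair} is $k[W_{g}]$ and the representation there may be taken to be $W_{g}^{\vee}$, the closed immersion being dual to the surjection $S_{k}(W_{g})\twoheadrightarrow B$. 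The first step --- which I expect to be the main obstacle --- is to extract from Definition~\ref{D:pseudoPair} the identification $\operatorname{var}(g)=\dim_{k}W_{g}$: the embedding just described gives $\operatorname{var}(g)\leq\dim_{k}W_{g}$, but for equality one must check that $\operatorname{Spec}(k[W_{g}])$ admits no equivariant closed immersion with non-degenerate image into a representation of strictly smaller dimension, and one must handle the bookkeeping in the case $1\in W_{g}$ (where $W_{g}^{\vee}$ needs a mild adjustment to keep the image off an affine hyperplane, without changing its dimension). Granting this, the Corollary becomes: there are a non-zero additive $b(t)$ and a non-zero $h\in A^{\mathbb{G}_{a}}$ with $\beta^{\sharp}(g)=g+b(t)h$ if and only if $\dim_{k}W_{g}=2$ (the degenerate cases $b(t)=0$ or $h=0$ merely say $g\in A^{\mathbb{G}_{a}}$, equivalently $\dim_{k}W_{g}\leq 1$).

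For the forward implication, suppose $(g,h)$ is a $b(t)$-pair with $b(t)\neq 0\neq h$. Writing $b(t)=\sum_{i}\lambda_{i}t^{p^{i}}$ and equating coefficients of powers of $t$ in $\beta^{\sharp}(g)=g+b(t)h$ gives $\phi_{0}(g)=g$, $\phi_{p^{i}}(g)=\lambda_{i}h$, and all other $\phi_{j}(g)=0$; since $h$ is invariant, $W_{g}=kg+kh$. This space is two-dimensional, for otherwise $g\in kh\subseteq A^{\mathbb{G}_{a}}$, forcing $b(t)h=0$ in the domain $A$ --- impossible. Hence $\operatorname{var}(g)=2$.

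For the reverse implication, suppose $\dim_{k}W_{g}=2$; then $g\notin A^{\mathbb{G}_{a}}$ (else $W_{g}=kg$ is one-dimensional). Apply Lemma~\ref{L:gaPolys} to the two-dimensional representation $W_{g}$: in an upper-triangular basis $\{e_{1},e_{2}\}$ (which exists since $\mathbb{G}_{a}$ is unipotent) we have $\beta^{\sharp}(e_{1})=e_{1}$ and $\beta^{\sharp}(e_{2})=e_{2}+q(t)e_{1}$ with $q(t)$ additive, and $q(t)\neq 0$ because $W_{g}$, containing the non-invariant element $g$, is not a trivial representation. Writing $g=c_{1}e_{1}+c_{2}e_{2}$ with $c_{2}\neq 0$ (forced by $g\notin A^{\mathbb{G}_{a}}$), one computes $\beta^{\sharp}(g)=g+q(t)(c_{2}e_{1})$. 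Since $e_{1}\in W_{g}\subseteq A$ is invariant, $h:=c_{2}e_{1}\in A^{\mathbb{G}_{a}}\setminus\{0\}$, and this identity says exactly that $(g,h)$ is a $b(t)$-pair for $b(t):=q(t)\in\mathfrak{O}$. So the only genuine work is the reformulation $\operatorname{var}(g)=\dim_{k}W_{g}$; after that, the forward direction is a coefficient computation and the backward direction is just the normal form of Lemma~\ref{L:gaPolys} for a two-dimensional representation.
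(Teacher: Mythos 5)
Your coefficient computation in the forward direction and your application of Lemma~\ref{L:gaPolys} to the two-dimensional module $W_{g}$ in the backward direction are exactly the content this corollary is meant to carry; that is almost certainly the paper's unwritten proof, since the corollary sits directly after Lemma~\ref{L:gaPolys} and uses nothing else. So the core of the argument is fine.

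The step you flag as the main obstacle, however, contains a concrete error rather than mere bookkeeping. You assert that when $1 \in W_{g}$ the representation $W_{g}^{\vee}$ only needs a dimension-preserving tweak so that the image avoids affine hyperplanes. It does not: take $A = k[x]$ with $\beta^{\sharp}(x) = x+t$ and $g = x$, so that $W_{g}$ is the $k$-linear span of $1$ and $x$ and $B = k[x]$. Every nonzero finite-dimensional $\mathbb{G}_{a}$-stable subspace $U$ of $k[x]$ contains $1$: if $p(x) \in U$ has degree $d \geq 1$, the coefficient of $t^{d}$ in $\beta^{\sharp}(p) = p(x+t)$ is the nonzero leading coefficient of $p$, which forces a nonzero constant into $U$. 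Hence $\operatorname{Spec}(B)$ admits no $\mathbb{G}_{a}$-equivariant closed immersion into \emph{any} linear representation whose image avoids all affine hyperplanes, and the inequality $\operatorname{var}(g) \leq \dim_{k} W_{g}$ you claim to obtain from ``the embedding just described'' is simply not available when $1 \in W_{g}$. Under a literal reading of Definition~\ref{D:pseudoPair} the variance of $x$ is undefined, even though $(x,1)$ is manifestly a $t$-pair, so the corollary as stated would fail. It --- and its later uses in Example~\ref{EG:1} and in the proofs of Theorems~\ref{T:trivialLargePedForm} and~\ref{T:trivialPedForm} --- only goes through if one \emph{takes} $\operatorname{var}(g) := \dim_{k} W_{g}$ as the operative meaning; that identification is not deducible from Definition~\ref{D:pseudoPair} as written and should be stated as a convention rather than derived. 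Once that is granted, your two-direction argument closes the proof.
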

\section{The Geometry of Pairs.}
\subsection{Results on Pairs.}
The first lemma will give some equivalent conditions for what it means for a $ b(t) $-pair $ (g,h) $ to be quasi-principle.
\begin{lem} \label{L:principlePairEquiv}
    Let $ \operatorname{Spec}(A) $ be a $ \mathbb{G}_{a} $-variety with action $ \beta $, and recall the definition of the group scheme $ \mathbf{ker}(b(t)) $ found in Example ~\ref{EG:boft} (see page \pageref{EG:boft}).  The following statements are equivalent:
    \begin{itemize}
        \item[a)] the $ b(t) $-pair $ (g,h) $ is a non-trivial, quasi-principle, $ b(t) $-pair,
        \item[b)] there is a non-trivial, $ b(t) $-pair $ (g,h) $, and $ \beta $ factors through $ (b(t),\operatorname{id}_{\operatorname{Spec}(A)}) $, where $ b(t) $ is the endomorphism of $ \mathbb{G}_{a} $ obtained from the ring homomorphism which sends $ t $ to $ b(t) $,
        \item[c)] the following diagram commutes:
        \begin{equation} \label{E:47}
            \xymatrix{
                \mathbf{ker}(b(t)) \times \operatorname{Spec}(A) \ar[rr]^{p_{2}} \ar[d] & & \operatorname{Spec}(A) \ar[d] \\
                \mathbb{G}_{a} \times \operatorname{Spec}(A) \ar[rr]^{\beta} & & \operatorname{Spec}(A)
            },
        \end{equation} and $ (g,h) $ is a non-trivial, $ b(t) $-pair, i.e.,
        \begin{equation*}
            \beta^{\sharp}(g)=g+b(t)h.
        \end{equation*}
        \item[d)] there is an action $ \widetilde{\beta} $ such that the following diagram commutes:
        \begin{equation*}
        \xymatrix{
            \mathbf{ker}(b(t)) \times \operatorname{Spec}(A) \ar[r]^{p_{2}} \ar[d] & \operatorname{Spec}(A) \ar@{-}[d] \\
            \mathbb{G}_{a} \times \operatorname{Spec}(A) \ar[r]^{\beta} \ar[d]^{(b(t), \operatorname{id}_{\operatorname{Spec}(A)})} & \operatorname{Spec}(A) \ar@{-}[d] \\
            \mathbb{G}_{a} \times \operatorname{Spec}(A) \ar[r]^{\widetilde{\beta}} \ar[r] & \operatorname{Spec}(A)
        },
        \end{equation*}
        and $ (g,h) $ is a non-trivial, $ b(t) $-pair, i.e.
        \begin{equation*}
            \beta^{\sharp}(g)=g+b(t)h.
        \end{equation*}
        Moreover, if any of conditions a)-d) hold, then $ \mathbf{ker}(b(t)) $ is the largest, additive, sub-group scheme of $ \mathbb{G}_{a} $ which acts trivially on $ \operatorname{Spec}(A) $ under the action $ \beta $.
    \end{itemize}
\end{lem}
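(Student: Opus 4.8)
The plan is to run the cycle (a) $\Leftrightarrow$ (c) $\Rightarrow$ (d) $\Rightarrow$ (b) $\Rightarrow$ (c) and then to treat the concluding maximality assertion separately. Note first that each of (a)--(d) contains the clause ``$(g,h)$ is a non-trivial $b(t)$-pair'' verbatim, so this clause plays no role in the cyclic implications and is used only in the final assertion. The equivalence (a) $\Leftrightarrow$ (c) is just an unwinding of Definition \ref{D:principleAPair}: ``$(g,h)$ is quasi-principle'' means by definition that $\mathbf{ker}(b(t))$ acts trivially on $\operatorname{Spec}(A)$, i.e.\ that $\beta\circ(\iota\times\operatorname{id}_{\operatorname{Spec}(A)})=p_{2}$ for the inclusion $\iota\colon\mathbf{ker}(b(t))\hookrightarrow\mathbb{G}_{a}$ of Example \ref{EG:boft}, and that is precisely the commutativity of \eqref{E:47}.

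For (c) $\Rightarrow$ (d) I would use the exact sequence of Example \ref{EG:boft}, which exhibits $b(t)\colon\mathbb{G}_{a}\to\mathbb{G}_{a}$ as the quotient map $\mathbb{G}_{a}\to\mathbb{G}_{a}/\mathbf{ker}(b(t))$; it is finite and faithfully flat, since $k[t]$ is free of rank $\deg_{t}b(t)$ over $k[b(t)]$. One can then either invoke the standard principle that a $G$-action trivial on a normal subgroup scheme $N$ is the same datum as a $G/N$-action, or argue by fppf descent directly: $b(t)\times\operatorname{id}$ is faithfully flat, $\mathbb{G}_{a}\times_{b(t),\mathbb{G}_{a},b(t)}\mathbb{G}_{a}\cong\mathbb{G}_{a}\times\mathbf{ker}(b(t))$ via $(s_{1},s_{2})\mapsto(s_{1},s_{2}-s_{1})$ because $b(t)$ is additive, and the descent datum $\beta\circ\pi_{1}=\beta\circ\pi_{2}$ reads $\beta(s_{1},x)=\beta(s_{1}+\kappa,x)$ for $\kappa\in\mathbf{ker}(b(t))$, which holds by (c) and the action axioms for $\beta$. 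This produces $\widetilde\beta$ with $\beta=\widetilde\beta\circ(b(t)\times\operatorname{id})$, and $\widetilde\beta$ satisfies the action axioms because the unit and associativity identities for $\widetilde\beta$ become those for $\beta$ after precomposition with the faithfully flat maps $b(t)\times\operatorname{id}$ and $(b(t)\times b(t))\times\operatorname{id}$, using $b(0)=0$ and $b(s_{1}+s_{2})=b(s_{1})+b(s_{2})$. This descent step, together with care over the possibly non-reduced structure of $\mathbf{ker}(b(t))$, is the main technical point.

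The implication (d) $\Rightarrow$ (b) is immediate, since an action is in particular a morphism. For (b) $\Rightarrow$ (c) I would restrict $\beta=\widetilde\beta\circ(b(t)\times\operatorname{id})$ along $\iota\times\operatorname{id}_{\operatorname{Spec}(A)}$: since $b(t)\circ\iota$ is the zero morphism $\mathbf{ker}(b(t))\to\mathbb{G}_{a}$ by the very definition of $\mathbf{ker}(b(t))$, we get $\beta\circ(\iota\times\operatorname{id})=\widetilde\beta\circ(0\times\operatorname{id})$, which factors as $p_{2}$ followed by $\widetilde\beta(0,-)$; and $\widetilde\beta(0,-)=\widetilde\beta(b(0),-)=\beta(0,-)=\operatorname{id}_{\operatorname{Spec}(A)}$ by the unit axiom for $\beta$. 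Hence $\beta\circ(\iota\times\operatorname{id})=p_{2}$, i.e.\ \eqref{E:47} commutes, closing the cycle.

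For the final assertion, suppose $H$ is an additive subgroup scheme of $\mathbb{G}_{a}$ acting trivially with $\mathbf{ker}(b(t))\subseteq H$. Over the algebraically closed $k$ we have either $H=\mathbb{G}_{a}$ or $H=\mathbf{ker}(b'(t))$ for an additive $b'(t)$, and $\mathbf{ker}(b(t))\subseteq H$ forces $b(t)\mid b'(t)$ in $k[t]$; applying the right division algorithm in $\mathfrak{O}$ to $b'(t)$ and $b(t)$ and comparing remainders in $k[t]$ shows $b'(t)=e(b(t))$ for some additive $e(t)$, with $\deg_{t}e>1$ exactly when the containment is strict (and $b'=0$ when $H=\mathbb{G}_{a}$). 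Triviality of $H$ gives $\beta^{\sharp}(a)-a\otimes 1\in b'(t)A[t]$ for all $a$; taking $a=g$ and using the $b(t)$-pair relation $\beta^{\sharp}(g)-g=b(t)h$ yields $b'(t)\mid b(t)h$ in $A[t]$. If $H=\mathbb{G}_{a}$ this says $b(t)h=0$, forcing $h=0$ since the nonzero additive polynomial $b(t)$ has a unit leading coefficient and is therefore a nonzerodivisor in $A[t]$. If the containment were strict, write $e(s)=s\,\widetilde e(s)$, so $b'(t)=b(t)\widetilde e(b(t))$; cancelling the nonzerodivisor $b(t)$ gives $\widetilde e(b(t))\mid h$ in $A[t]$, impossible for a nonzero $h$ because $\widetilde e(b(t))$ is a polynomial in $t$ of degree $(\deg_{t}e-1)\deg_{t}b(t)\geq 1$ with unit leading coefficient. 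In either case the non-triviality of $(g,h)$ is contradicted, so no additive subgroup scheme strictly containing $\mathbf{ker}(b(t))$ acts trivially; thus $\mathbf{ker}(b(t))$ is the largest such, and this is the only place the non-triviality of the pair enters.
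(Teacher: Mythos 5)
Your proof is correct and takes a genuinely different route through the hard part of the lemma. In the paper, the substantive direction is a) $\Rightarrow$ b): ``$\mathbf{ker}(b(t))$ acts trivially implies $\beta^{\sharp}(A)\subseteq A[b(t)]$.'' The paper proves this by first handling linear representations via induction on the length of the socle series (using Lemma ~\ref{L:gaPolys} and the fact that $b(t)\mid c(t)$ in $k[t]$ for additive $c(t)$ forces $c(t)\in\mathfrak{O}\langle b(t)\rangle$), and then transporting the result to general affine $\mathbb{G}_{a}$-varieties by an equivariant closed immersion into a representation. You replace this entire machinery with a single fppf-descent step: since $b(t)\times\operatorname{id}$ is finite and faithfully flat, since $\mathbb{G}_{a}\times_{b,\mathbb{G}_{a},b}\mathbb{G}_{a}\cong\mathbb{G}_{a}\times\mathbf{ker}(b(t))$ by additivity, and since the cocycle condition $\beta\circ\pi_{1}=\beta\circ\pi_{2}$ is exactly the statement that $\mathbf{ker}(b(t))$ acts trivially together with associativity, the morphism $\beta$ descends to $\widetilde\beta$, and the action axioms for $\widetilde\beta$ follow by pulling back along faithfully flat covers. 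This is more conceptual, avoids the socle induction and the embedding lemma entirely, and is visibly characteristic-independent. It also routes the equivalence as (a) $\Leftrightarrow$ (c) $\Rightarrow$ (d) $\Rightarrow$ (b) $\Rightarrow$ (c), whereas the paper goes directly a) $\Leftrightarrow$ b) and attaches c), d) afterward; both cycles close.

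For the final maximality assertion you also argue differently, and I think to advantage. The paper passes through closed points: it picks a nonzero $t_{0}\in\mathbf{ker}(c_{1}(t))$ and a $t_{1}$ with $b(t_{1})=t_{0}$, then shows $t_{1}$ moves points of $D(h)$. This requires $c_{1}(t)$ to have a nonzero root, which fails when $c_{1}(t)$ is purely inseparable, e.g.\ $c_{1}(t)=t^{p}$, in which case $\mathbf{ker}(b(t))\subsetneq\mathbf{ker}(c_{1}(b(t)))$ is a strict scheme-theoretic inclusion that is invisible on closed points. Your argument works at the level of the coordinate ring: triviality of $H=\mathbf{ker}(b'(t))$ gives $\beta^{\sharp}(g)-g\in b'(t)A[t]$, combined with $\beta^{\sharp}(g)-g=b(t)h$ and the factorization $b'(t)=b(t)\widetilde{e}(b(t))$ you cancel the nonzerodivisor $b(t)$ and compare $t$-degrees to force $h=0$. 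This handles the infinitesimal case cleanly. The one thing to note explicitly (which you implicitly use) is that ``acts trivially'' is meant scheme-theoretically, i.e.\ as commutativity of ~\eqref{E:47}, so that the containment $\beta^{\sharp}(a)-a\in b'(t)A[t]$ holds on the nose and not merely up to radical; this is consistent with Definition ~\ref{D:principleAPair} as the paper uses it.
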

\begin{proof}
    If $ (g,h) $ is a non-trivial, quasi-principle $ b(t) $-pair, then $ \mathbf{ker}(b(t)) $ acts trivially on $ \operatorname{Spec}(A) $.  Let $ t_{0} \in \mathbf{ker}(b(t)) $, let $ y $ be a closed point of $ \operatorname{Spec}(A) $, and let $ f $ be an element of $ A $.  Since $ \mathbf{ker}(b(t)) $ acts trivially upon $ \operatorname{Spec}(A) $,
    \begin{align*}
        \beta^{\sharp}(f)(t_{0},y) &= f(t_{0} \ast y) \\
        &= f(y).
    \end{align*}
    So, $ \beta^{\sharp}(f)-f \in \langle b(t) \rangle A[t] $.
    
    We claim that if $ \beta^{\sharp}(f)-f \in \langle b(t) \rangle A[t] $ for all functions $ f \in A $, then $ \beta^{\sharp}(A) $ is contained in $ A[b(t)] $, i.e., $ \beta $ factors through $ (b(t),\operatorname{id}_{\operatorname{Spec}(A)}) $.  
    
    We shall first prove this for linear representations $ \beta: \mathbb{G}_{a} \to \operatorname{GL}(\mathbf{V}) $.  In doing so, we shall induce on the length of the socle series of $ \mathbf{V} $.  Assume that the length of the socle series of $ \mathbf{V} $ is two, $ \{x_{1},\dots,x_{n}\} $ is an upper triangular, dual basis of $ \mathbf{V} $ and $ \beta^{\sharp}(f(X))-f(X) \in \langle b(t) \rangle k[X][t] $.  If the length of the socle series is two, and $ \{x_{1},\dots,x_{s}\} $ is a basis of $ (\mathbf{V}^{\ast})^{\mathbb{G}_{a}} $, then for every $ i>s $, there are additive polynomials $ c_{i,j}(t) $ for $ 1 \le j \le s $ such that $ \beta^{\sharp}(x_{i}) $ is equal to $ x_{i}+\sum_{j=1}^{s} c_{i,j}(t)x_{j} $.  As a result, if $ \beta^{\sharp}(x_{i})-x_{i} \in \langle b(t) \rangle k[X][t] $, then $ c_{i,j}(t) \in \mathfrak{O}\langle b(t) \rangle $, since the zeroes of an additive polynomial form a sub-group under addition.
    
    Assume that if $ \beta: \mathbb{G}_{a} \to \operatorname{GL}(\mathbf{V}) $ is a linear representation, $ \{x_{1},\dots,x_{n}\} $ is an upper triangular, dual basis of $ \mathbf{V} $, the length of the socle series of $ \mathbf{V} $ is $ \ell<L $, and $ \beta^{\sharp}(f(X))-f(X) \in \langle b(t) \rangle k[X][t] $, then $ \beta^{\sharp}(k[X]) \subseteq k[X][b(t)] $.
    
    Now assume that $ \beta: \mathbb{G}_{a} \to \operatorname{GL}(\mathbf{V}) $ is a linear representation, $ \{x_{1},\dots,x_{n}\} $ is an upper triangular, dual basis of $ \mathbf{V} $, the length of the socle series of $ \mathbf{V} $ is $ L $, and $ \beta^{\sharp}(f(X))-f(X) \in \langle b(t) \rangle k[X][t] $ for all polynomials $ f(X) $.  If we apply the induction hypothesis to the dual representations of $ \mathbf{V}^{\ast}/(\mathbf{V}^{\ast})^{\mathbb{G}_{a}} $ and $ \operatorname{soc}^{2}(\mathbf{V}^{\ast}) $ respectively, then we prove that $ \beta^{\sharp}(k[X]) $ is contained in $ k[X][b(t)] $.  By induction if $ \beta: \mathbb{G}_{a} \to \operatorname{GL}(\mathbf{V}) $ is a linear representation with dual basis $ \{x_{1},\dots,x_{n}\} $ of $ \mathbf{V} $ such that $ \beta^{\sharp}(x_{i})-x_{i} \in \langle b(t) \rangle k[X][t] $, then $ \beta^{\sharp}(k[X]) $ is contained in $ k[X][b(t)] $.
    
    If $ \operatorname{Spec}(A) $ is a $ \mathbb{G}_{a} $-variety with action $ \beta $ such that $ \beta^{\sharp}(f)-f \in \langle b(t) \rangle A[t] $ for all $ f \in A $, then there is a linear representation $ \gamma: \mathbb{G}_{a} \to \operatorname{GL}(\mathbf{V}) $ and a $ \mathbb{G}_{a} $-equivariant immersion $ \tau: \operatorname{Spec}(A) \to \mathbf{V} $ such that $ \operatorname{Spec}(A) $ is not contained in any affine hyperplane.  
    
    If $ \{x_{1},\dots,x_{n}\} $ is a dual basis of $ \mathbf{V} $, then $ k[\tau^{\sharp}(x_{1}),\dots,\tau^{\sharp}(x_{n})] $ is isomorphic to $ A $ as a $ k $-algebra.  Since $ \operatorname{Spec}(A) $ is not contained in any affine hyperplane, there is no element $ x \in \mathbf{V}^{\ast} $ such that $ \tau^{\sharp}(x) $ is equal to zero.  As a result, \linebreak $ \gamma^{\sharp}(x_{i})-x_{i} \in \langle b(t) \rangle k[X][t] $, since $ \beta^{\sharp}(\tau^{\sharp}(x_{i}))-\tau^{\sharp}(x_{i}) \in \langle b(t) \rangle A[t] $.  Therefore, $ \gamma^{\sharp}(k[X]) \subseteq k[X][b(t)] $, which in turn means that
    \begin{align*}
        \beta^{\sharp}(A) &\subseteq (\tau^{\sharp} \otimes \operatorname{id}_{k[t]}) \circ \gamma^{\sharp}(k[X]) \\
        &\subseteq (\tau^{\sharp} \otimes \operatorname{id}_{k[t]})(k[X][b(t)]) \\
        &= A[b(t)].
    \end{align*}
    Since $ \beta^{\sharp}(A) \subseteq A[b(t)] $, the morphism $ \beta $ factors through $ (\operatorname{id}_{\operatorname{Spec}(A)}, b(t)) $.

    Assume that b) holds.  The statement that $ \beta $ factors through $ (\operatorname{id}_{\operatorname{Spec}(A)},b(t)) $ is equivalent to the statement that $ \beta^{\sharp}(A) \subseteq A[b(t)] $.  Let $ A $ equal $ k[z_{1},\dots,z_{n}] $ as a $ k $-algebra.  If $ t_{0} \in \mathbf{ker}(b(t)) $ and $ x \in \operatorname{Spec}(A) $, then
    \begin{align*}
        t_{0} \ast x &= (z_{1}(t_{0} \ast x),\dots,z_{n}(t_{0} \ast x)) \\
        &= (\beta^{\sharp}(z_{1})(t_{0},x),\dots,\beta^{\sharp}(z_{n})(t_{0},x)) \\
        &= (z_{1}(x),\dots,z_{n}(x)) \\
        &= x.
    \end{align*}
    This is because $ \beta^{\sharp}(z_{i})-z_{i} \in \langle b(t) \rangle A[b(t)] $.  Therefore $ \mathbf{ker}(b(t)) $ stabilizes all points of $ \operatorname{Spec}(A) $.  So a $ b(t) $-pair $ (g,h) $ is quasi-principle.  As a result b) holds if and only if a) holds. 

    If c) holds, then the diagram in ~\eqref{E:47} shows that $ \mathbf{ker}(b(t)) $ acts trivially on $ \operatorname{Spec}(A) $.  Since $ (g,h) $ is a $ b(t) $-pair, it is a quasi-principle $ b(t) $-pair.  As a result, c) implies a).

    If $ (g,h) $ is a quasi-principle $ b(t) $-pair, then $ \mathbf{ker}(b(t)) $ acts trivially on $ \operatorname{Spec}(A) $.  Therefore, the diagram in ~\eqref{E:47} commutes and c) holds.  Therefore c) holds if and only if $ (g,h) $ is a quasi-principle $ b(t) $-pair.

    It is clear that c) holds if and only if d) holds.

    If $ c(t) $ is an additive polynomial such that $ \mathbf{ker}(b(t)) \subseteq \mathbf{ker}(c(t)) $ and $ \mathbf{ker}(c(t)) $ acts trivially on $ \operatorname{Spec}(A) $, then $ c(t) $ is equal to $ c_{1}(b(t)) $ for some additive polynomial $ c_{1}(t) $.  Let $ t_{0} $ be a non-zero point of $ \mathbb{G}_{a} $ such that $ c_{1}(t_{0}) $ is equal to zero, and let $ t_{1} $ be a point in $ \mathcal{V}(\langle b(t)-t_{0} \rangle) $.  If $ c_{1}(t) $ is not equal to $ t $, then there exist such points $ t_{0} $ and $ t_{1} $.  The point $ t_{1} \in \mathbf{ker}(c(t)) $ because
    \begin{align*}
        c(t_{1}) &= c_{1}(b(t_{1})) \\
        &= c_{1}(t_{0}) \\
        &=0.
    \end{align*}
    However, we claim that $ t_{1} $ does not act trivially on $ \operatorname{Spec}(A) $.  If $ x \in D(h) $, then
    \begin{align*}
        g(t_{1} \ast x) &= g(x)+b(t_{1})h(x) \\
        &= g(x)+t_{0}h(x) \\
        &\ne g(x).
    \end{align*}
    This contradicts our assumption that $ \mathbf{ker}(c(t)) $ acts trivially on $ \operatorname{Spec}(A) $.  As a result, $ c(t) $ is equal to $ b(t) $ and $ \mathbf{ker}(b(t)) $ is the largest, additive, sub-group scheme which acts trivially on $ \operatorname{Spec}(A) $.
\end{proof}
\begin{dfn} ~\label{D:twistEquivt}
    The variety $ \mathbb{A}^{1}_{k} \cong \operatorname{Spec}(k[s]) $ has several $ \mathbb{G}_{a} $-actions.  If $ x \in \mathbb{A}^{1}_{k} $ and $ t_{0} \in \mathbb{G}_{a} $, then $ \gamma_{t} $ sends $ (t_{0},x) $ to $ x+t_{0} $.  However, if $ x \in \mathbb{A}^{1}_{k} $, $ t_{0} \in \mathbb{G}_{a} $ and $ c(t) $ is an additive polynomial, then $ \gamma_{c(t)} $ sends $ (t_{0},x) $ to $ x+c(t_{0}) $.  This is the $ c(t) $-twisted action on $ \mathbb{A}^{1}_{k} $.  If $ \mathbb{A}^{1}_{k} $ is endowed with this action, then we shall denote it by $ \mathbb{G}_{a}^{c(t)} $.

    If $ c(t) \in \mathfrak{O} $, then there is a $ \mathbb{G}_{a} $-equivariant morphism
    \begin{equation*}
        c(t): \mathbb{G}_{a}^{d(t)} \to \mathbb{G}_{a}^{c(d(t))},
    \end{equation*}
    which sends a point $ w $ to $ c(w) $.

    Likewise, if $ X $ is a $ \mathbb{G}_{a} $-variety, then $ X^{c(t)} $ is the space $ X $ where the action of $ \mathbb{G}_{a} $ has been twisted by the morphism $ c $.  If $ \beta $ is the action of $ \mathbb{G}_{a} $ on $ X $, then $ \beta_{c(t)} $ is the action of $ \mathbb{G}_{a} $ on $ X^{c(t)} $.
\end{dfn}
\begin{prop} \label{P:specificTrivialBundle}
    Let $ \operatorname{Spec}(A) $ be a $ \mathbb{G}_{a} $-variety with action $ \beta $ and let $ h \in A^{\mathbb{G}_{a}} $ be a non-zero invariant.  The following conditions are equivalent
    \begin{itemize}
        \item[a)] there is a dominant $ \mathbb{G}_{a} $-equivariant morphism $ \Phi: D(h) \to \mathbb{G}_{a}^{c(t)} $ (see Definition ~\ref{D:twistEquivt}).
        \item[b)] there is a $ c(t) $-pair $ (g,h^{e}) $ for some non-zero $ g \in A $, and $ e \in \mathbb{N}_{0} $.
    \end{itemize}
\end{prop}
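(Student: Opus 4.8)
The plan is to read a morphism $\Phi \colon D(h) \to \mathbb{A}^{1}_{k} = \operatorname{Spec}(k[s])$ off its comorphism $\Phi^{\sharp}(s) \in A_{h}$, to rewrite $\mathbb{G}_{a}$-equivariance for the $c(t)$-twisted action of Definition~\ref{D:twistEquivt} as an identity involving $\beta^{\sharp}$, and then to recognise that identity as a $c(t)$-pair relation. I will assume $c(t)\neq 0$ (for $c(t)=0$ the equivalence fails: then b) always holds via $(h,h^{e})$, while a) can fail, e.g. for translation on $\mathbb{A}^{1}_{k}$ with $h=1$). First I would record that $D(h)$ is $\mathbb{G}_{a}$-stable: since $\beta^{\sharp}(h)=h$ we get $h(t_{0}\ast x)=h(x)$, so $\beta$ restricts to an action on $D(h)=\operatorname{Spec}(A_{h})$ whose co-action is the extension $\beta^{\sharp}\colon A_{h}\to A_{h}[t]$ of the co-action on $A$. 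Writing $\Phi^{\sharp}(s)=g/h^{e}$ with $g\in A$ and $e\in\mathbb{N}_{0}$, and using that the twisted co-action on $\mathbb{G}_{a}^{c(t)}$ sends $s\mapsto s+c(t)$, the morphism $\Phi$ is $\mathbb{G}_{a}$-equivariant as a map into $\mathbb{G}_{a}^{c(t)}$ if and only if
\begin{equation*}
    \beta^{\sharp}(g/h^{e}) = g/h^{e}+c(t) \quad\text{in } A_{h}[t].
\end{equation*}
Since $\beta^{\sharp}$ is a ring homomorphism fixing the invariant $h^{e}$, this is in turn equivalent to the identity $\beta^{\sharp}(g)=g+c(t)h^{e}$ in $A_{h}[t]$.

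For a)$\Rightarrow$b) I would take a dominant equivariant $\Phi$ and write $\Phi^{\sharp}(s)=g/h^{e}$; dominance forces $\Phi^{\sharp}$ injective, hence $g\neq 0$. The equivariance identity holds in $A_{h}[t]$, but both $\beta^{\sharp}(g)$ and $g+c(t)h^{e}$ already lie in $A[t]$, and $A[t]\hookrightarrow A_{h}[t]$ because $A$ is a domain and $h\neq 0$; so $\beta^{\sharp}(g)=g+c(t)h^{e}$ holds in $A[t]$. With $h^{e}\in A^{\mathbb{G}_{a}}$ this says exactly that $(g,h^{e})$ is a $c(t)$-pair with $g$ non-zero.

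For b)$\Rightarrow$a) I would start from a $c(t)$-pair $(g,h^{e})$ with $g\neq 0$ and set $\Phi^{\sharp}(s)=g/h^{e}\in A_{h}$. Dividing $\beta^{\sharp}(g)=g+c(t)h^{e}$ by the invariant $h^{e}$ returns the equivariance identity, so $\Phi\colon D(h)\to\mathbb{G}_{a}^{c(t)}$ is $\mathbb{G}_{a}$-equivariant. To see $\Phi$ is dominant, i.e. $\Phi^{\sharp}\colon k[s]\to A_{h}$ is injective, suppose $0\neq f\in\ker\Phi^{\sharp}$ has minimal degree $d$; then $d\geq 1$, and applying $\beta^{\sharp}$ to $f(g/h^{e})=0$ gives $f(g/h^{e}+c(t))=0$ in $A_{h}[t]$. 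But the coefficient of the highest power of $t$ in $f(g/h^{e}+c(t))$ is the leading coefficient of $f$ times the $d$-th power of the leading coefficient of $c(t)$, a non-zero scalar since $c(t)\neq 0$ --- a contradiction.

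The whole argument is essentially a bookkeeping exercise in comorphisms, and I expect the only step needing genuine care to be the dominance clause: moving between ``$g\neq 0$'' and ``$\Phi$ dominant'' uses the equivariance relation itself (together with $c(t)\neq 0$ and the integrality of $A$) to keep $g/h^{e}$ transcendental over $k$, and this is precisely what forces the hypothesis $c(t)\neq 0$. The only other place the ``variety'' hypothesis (so $A$ a domain) is really used is the descent from $A_{h}[t]$ to $A[t]$ in a)$\Rightarrow$b).
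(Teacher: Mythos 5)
Your proof is correct and follows essentially the same route as the paper: both directions translate the $\mathbb{G}_{a}$-equivariance of $\Phi$ into the identity $\beta^{\sharp}(g)=g+c(t)h^{e}$, the only difference being that you work with comorphisms where the paper computes on closed points. Two of your refinements are genuine improvements on the paper's write-up: the observation that one must take $c(t)\neq 0$ for the equivalence to hold as stated (the paper leaves this implicit), and the transcendence argument for dominance in b)$\Rightarrow$a) --- the paper merely asserts that $\Phi^{\sharp}$ maps $k[t]$ isomorphically onto $k[g/h^{e}]$, which tacitly requires $g/h^{e}$ to be transcendental over $k$, and your leading-coefficient argument via the pair relation is precisely what supplies that.
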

\begin{proof}
    Assume that b) holds, i.e., $ (g,h^{e}) $ is a $ c(t) $-pair for a non-zero $ g \in A $.  Let $ \Phi: D(h) \to \mathbb{G}_{a}^{c(t)} $ be the morphism of varieties which sends a point $ x $ to $ (g/h^{e})(x) $.  We do not yet know if $ \Phi $ is $ \mathbb{G}_{a} $-equivariant.  If $ x \in D(h) $ and $ t_{0} \in \mathbb{G}_{a} $, then because $ (g,h^{e}) $ is a $ c(t) $-pair:
    \begin{align*}
            \Phi \circ \beta(t_{0},x) &= \Phi(t_{0} \ast x) \\
            &= (g/h^{e})(t_{0} \ast x) \\
            &= (g/h^{e})(x)+c(t_{0}) \\
            &= \gamma_{c(t)}\left(t_{0},(g/h^{e})(x)\right) \\
            &= \gamma_{c(t)} \circ (\operatorname{id}_{\mathbb{G}_{a}}, \Phi)(t_{0},x).
    \end{align*}
    So $ \Phi $ is $ \mathbb{G}_{a} $-equivariant.  Since $ \Phi^{\sharp} $ maps $ k[t] $ isomorphically onto $ k[g/h^{e}] $ it is injective.  So $ \Phi $ is dominant.  As a result, b) implies a).

    Now assume that a) holds, i.e., there is a dominant $ \mathbb{G}_{a} $-equivariant morphism \linebreak $ \Phi: D(h) \to \mathbb{G}_{a}^{c(t)} $.  Because $ \Phi $ is $ \mathbb{G}_{a} $-equivariant, the following diagram commutes:
    \begin{equation} \label{E:48}
    \xymatrix{
        \mathbb{G}_{a} \times D(h) \ar[rr]^{\beta} \ar[d]^{(\operatorname{id},\Phi)} & & D(h) \ar[d]^{\Phi} \\
        \mathbb{G}_{a} \times \mathbb{G}_{a}^{c(t)} \ar[rr]^{\gamma_{c(t)}} & & \mathbb{G}_{a}^{c(t)}
        }.
    \end{equation}
    There is some $ g \in A $ and $ e \in \mathbb{N}_{0} $ such that $ \Phi(x) = (g/h^{e})(x) $ for all $ x \in D(h) $.  Since $ \Phi $ is dominant, $ g $ cannot be constant.  If $ x \in D(h) $ and $ t_{0} \in \mathbb{G}_{a} $, then:
    \begin{align*}
        (g/h^{e})(t_{0} \ast x) &= \Phi(t_{0} \ast x) \\
        &= \Phi \circ \beta(t_{0},x) \\
        &= \gamma_{c(t)} \circ (\operatorname{id},\Phi)(t_{0},x) \\
        &= \gamma_{c(t)}\left(t_{0},(g/h^{e})(x)\right) \\
        &= (g/h^{e})(x)+c(t_{0}).
    \end{align*}
    So $ (g,h^{e}) $ is a $ c(t) $-pair.  Therefore, a) implies b).
\end{proof}
\begin{lem} \label{L:connectedNonIdentity}
    If $ \operatorname{Spec}(A) $ is a $ \mathbb{G}_{a} $-variety, $ h \in A^{\mathbb{G}_{a}} $, then $ \Phi: D(h) \to \mathbb{G}_{a}^{b(t)} $ is a dominant, generically smooth, $ \mathbb{G}_{a} $-equivariant morphism whose fibres are connected if and only if $ \Phi $ does not factor through a non-identity, endomorphism $ c(t) $ of $ \mathbb{G}_{a} $ and $ \Phi $ does not contract $ D(h) $ to a point.
\end{lem}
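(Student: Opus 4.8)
The plan is to reduce the whole statement to a question about a single rational function, via Proposition~\ref{P:specificTrivialBundle}: giving a $\mathbb{G}_{a}$-equivariant morphism $\Phi\colon D(h)\to\mathbb{G}_{a}^{b(t)}$ is the same as giving an element $u=g/h^{e}\in A_{h}$ for which $(g,h^{e})$ is a $b(t)$-pair, and then $\Phi(x)=u(x)$. I will assume $b(t)\neq 0$ (the case $b(t)=0$ is degenerate and handled separately). Equivariance then already forces $\Phi$ to be non-constant — were $\Phi\equiv w_{0}$, then $w_{0}=w_{0}+b(t_{0})$ for all $t_{0}$ — hence dominant (as $D(h)$ is irreducible) and non-contracting, so those two clauses are automatic. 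It therefore remains to prove, for such $\Phi$, that ``generically smooth with connected fibres'' is equivalent to ``$\Phi$ does not factor through an endomorphism $c(t)$ of $\mathbb{G}_{a}$ of degree greater than one'' (among endomorphisms only the non-invertible ones matter, since every $\Phi$ factors through the scalings). I will use freely: (i) that $\mathbb{G}_{a}$ acts transitively on the closed points of every $\mathbb{G}_{a}^{d(t)}$ — a non-constant polynomial map $\mathbb{A}^{1}\to\mathbb{A}^{1}$ is surjective — so a non-constant equivariant morphism into such a space is surjective on closed points and its closed fibres are pairwise isomorphic; and (ii) that over the perfect field $k$, generic smoothness of a dominant morphism is equivalent to the induced extension of function fields being separably generated, which here says exactly that $du\neq 0$ in $\Omega_{\operatorname{Frac}(A)/k}$, equivalently $u\notin\operatorname{Frac}(A)^{p}$.

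For the forward implication I would argue by contradiction. Suppose $\Phi=c(t)\circ\Psi$ with $\deg c>1$, $b(t)=c(d(t))$, and $\Psi\colon D(h)\to\mathbb{G}_{a}^{d(t)}$ equivariant; then $\Psi$ is non-constant (else $\Phi$ is), hence surjective on closed points by (i). If $c(t)$ has a non-zero linear term, it is a separable polynomial map $\mathbb{A}^{1}\to\mathbb{A}^{1}$ of degree $\deg c\geq p$, so each of its fibres is $\deg c$ distinct closed points; pulling back along the surjective $\Psi$ makes every fibre of $\Phi$ a disjoint union of $\geq p$ non-empty closed subsets, so $\Phi$ does not have connected fibres. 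If instead $c(t)=\sum_{i\geq 1}a_{i}t^{p^{i}}$ has no linear term, then with $v=\Psi^{\sharp}(t)$ we get $u=c(v)=\bigl(\sum_{i\geq 1}a_{i}^{1/p}v^{p^{i-1}}\bigr)^{p}\in\operatorname{Frac}(A)^{p}$ (here $k$ is perfect), so $\Phi$ is not generically smooth. Either way the left-hand conditions fail.

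For the converse, assume $\Phi$ does not factor through a degree-$>1$ endomorphism. If $\Phi$ were not generically smooth, then $u=w^{p}$ for some $w\in\operatorname{Frac}(A)$; since $w$ is integral over $A_{h}$ (it satisfies $T^{p}-u$) and $A_{h}$ is normal — I must assume $\operatorname{Spec}(A)$ normal here, which covers the representation case — we get $w=g'/h^{e'}\in A_{h}$. Taking $g=g'^{p}$, $e=pe'$ as the representative of the pair (after enlarging $e'$ if necessary), the identity $(x+y)^{p}=x^{p}+y^{p}$ applied to $\beta^{\sharp}(g')^{p}=g'^{p}+b(t)h^{pe'}$ forces $b(t)$ to have no linear term, forces $b(t)=c(t)^{p}$ for the additive polynomial $c(t)=\sum a_{i}^{1/p}t^{p^{i-1}}$, and yields $\beta^{\sharp}(g')=g'+c(t)h^{e'}$; so $(g',h^{e'})$ is a $c(t)$-pair and $\Phi$ factors through the Frobenius endomorphism of $\mathbb{G}_{a}$, which has degree $p>1$ — contradiction. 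Hence $\Phi$ is generically smooth.

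The remaining point — that the fibres are connected — is where I expect the real work. Let $E$ be the algebraic closure of $k(u)$ inside $\operatorname{Frac}(A)$; since $\operatorname{Frac}(A)$ is finitely generated over $k$, $E$ is finite over $k(u)$, and it is $\mathbb{G}_{a}$-stable because intrinsically defined. The $\mathbb{G}_{a}$-action on $E$ induces a homomorphism from $\mathbb{G}_{a}$ to the automorphism group scheme of the smooth projective curve $C$ with function field $E$; if $C$ had genus $\geq 2$ that scheme is finite \'{e}tale, and if $C$ had genus $1$ the image would land in the translation subgroup, which receives no non-constant homomorphism from $\mathbb{G}_{a}$ — in either case $\mathbb{G}_{a}$ would act trivially on $E\supseteq k(u)$, contradicting the non-trivial action $u\mapsto u+b(t)$. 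So $C\cong\mathbb{P}^{1}$ and $E=k(s)$, and by Borel's fixed-point theorem I may place the fixed point at $\infty$, so the action becomes $s\mapsto s+d(t_{0})$ with $d\neq 0$. Then the pole locus of $u$ on $\mathbb{P}^{1}_{s}$ is a non-empty finite $\mathbb{G}_{a}$-stable set, hence $\{\infty\}$, so $u\in k[s]$; and the relation $u(s+r)=u(s)+u(r)-u(0)$ for all $r\in k$ (obtained by specializing $u(s+d(t_{0}))=u(s)+b(t_{0})$ first at $s=0$) shows $u-u(0)$ is an additive polynomial $c(s)$ with $\deg c=[E:k(u)]$ and $b(t)=c(d(t))$. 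Now if the fibres of $\Phi$ were not connected, then $E\supsetneq k(u)$, so $\deg c>1$; also $s$ is integral over $A_{h}$ (from $c(s)=u-u(0)$, monic after scaling), hence $s\in A_{h}$ by normality, and, after absorbing a translation, the resulting equivariant $\Psi\colon D(h)\to\mathbb{G}_{a}^{d(t)}$ exhibits $\Phi=c(t)\circ\Psi$ — contradicting the hypothesis. Hence the fibres are connected. The delicate steps, then, are establishing the finiteness and rationality of $E$ together with the structure of its induced $\mathbb{G}_{a}$-action (via the classification of $\mathbb{G}_{a}$-actions on curves and Borel's theorem), and the bookkeeping that places $w$, respectively $s$, inside $A_{h}$; outside the normal case, ``$\Phi$ factors through $c(t)$'' should be read after passing to a smaller principal open or to the normalization.
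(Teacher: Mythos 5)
Your proof is correct and takes a genuinely different, more self-contained route than the paper's, while arriving at the same place. For the converse implication on connectedness, the paper embeds $D(h)$ in a projective variety, resolves indeterminacies, invokes Stein factorization $\Psi = c \circ \Psi_{1}$, and then asserts without further argument that $c$ is a morphism $\mathbb{G}_{a}^{d(t)} \to \mathbb{G}_{a}^{b(t)}$ given by an additive polynomial; your function-field argument is exactly the missing justification, since the intermediate curve in the Stein factorization is precisely the curve with function field your $E$, and your steps (excluding genus $\geq 1$ via the classification of $\mathbb{G}_{a}$-actions on curves, placing the fixed point at $\infty$ by Borel, deriving $u(s+r) = u(s)+u(r)-u(0)$ to force $u-u(0)$ additive) establish both that $C \cong \mathbb{P}^{1}$ and that $c$ is additive. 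For generic smoothness the paper appeals to L\"{u}roth's theorem, whereas your argument proceeds directly from $u = w^{p}$, deduces $b(t) = c(t)^{p}$ by comparing $p$-th powers, and extracts a $c(t)$-pair $(g', h^{e'})$ --- more elementary, and it makes explicit that the factorization must hold at the level of morphisms, not merely rational maps. You are right to flag the normality hypothesis needed to place $w$, respectively $s$, in $A_{h}$: the paper's proof uses this tacitly when it passes from the field-theoretic identity $u = z^{p^{i}}$ to the conclusion ``$\Phi$ factors through $t^{p^{i}}$,'' and you correctly note that normality holds in the representation case of interest. The forward implication is essentially identical in both proofs, as is the observation that equivariance plus $b(t) \neq 0$ already forces $\Phi$ non-constant, hence dominant and non-contracting.
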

\begin{proof}
    Suppose $ \Phi $ factors through the endomorphism $ t^{p^{i}} $.  In this case, there is some separable, additive polynomial $ d(t) $ such that $ b(t) $ is equal to $ d(t)^{p^{i}} $.  If $ k[w] $ is the affine coordinate ring of $ \mathbb{G}_{a}^{d(t)} $, then the following diagram lists inclusions of fields:
    \begin{equation*}
    \xymatrix{
        \operatorname{Frac}(A) \ar@{-}[d] \\
        k(w) \cong K\left(\mathbb{G}_{a}^{d(t)}\right) \ar@{-}[d] \\
        k(w^{p}) \cong K\left(\mathbb{G}_{a}^{d(t^{p^{i}})}\right) \cong K\left(\mathbb{G}_{a}^{b(t)}\right)
        }.
    \end{equation*}
    The extension $ \operatorname{Frac}(A)/K\left(\mathbb{G}_{a}^{b(t)}\right) $ is not separable.  Therefore, $ \Phi $ is not generically smooth.

    Now suppose there exists a separable, additive polynomial $ c(t) $ such that $ \Phi $ factors through the endomorphism $ c(t) $.  If this is the case, then there is an additive polynomial $ d(t) $ and a $ \mathbb{G}_{a} $-equivariant morphism $ \Phi_{1}: D(h) \to \mathbb{G}_{a}^{d(t)} $ such that $ \Phi $ is equal to $ c(t) \circ \Phi_{1} $.  If $ t_{0} $ is a point of $ \mathbb{G}_{a}^{b(t)} $, then $ \Phi^{-1}(t_{0}) $ is equal to $ \cup_{i=1}^{\deg(c(t))} \Phi_{1}^{-1}(\xi_{i}) $ where $ c(t)-t_{0} = \prod_{i=1}^{\deg(c(t))} (t-\xi_{i}) $.  As a result, the fibres of $ \Phi $ are not connected if $ \Phi $ factors through any such non-identity, $ \mathbb{G}_{a} $-equivariant morphism.  If $ \Phi $ is not dominant, then the image of $ \Phi $ is a dimension zero, connected, sub-variety, i.e., a point.  This would mean that $ \Phi $ contracts $ D(h) $ to a point.

    Now suppose that $ \Phi: D(h) \to \mathbb{G}_{a}^{b(t)} $ is a $ \mathbb{G}_{a} $-equivariant morphism which does not factor through a non-identity endomorphism $ c(t) $ of $ \mathbb{G}_{a} $ and does not contract $ D(h) $ to a point.  Because $ D(h) $ is affine, it embeds as an open sub-variety of a projective variety $ Z $.  We may resolve the indeterminacies of the rational map $ \Phi: Z \dashrightarrow \mathbb{P}^{1}_{k} $ to obtain a morphism $ \Psi: W \to \mathbb{P}^{1}_{k} $.  Here, $ W $ is obtained from $ Z $ via blow-ups and $ \Psi \mid_{D(h)} \cong \Phi $.

    By the Stein factorization theorem, there are morphisms $ c $ and $ \Psi_{1} $, such that: $ c $ is finite, the fibres of $ \Psi_{1} $ are connected, and $ \Psi $ is equal to $ c \circ \Psi_{1} $.  If $ \Psi $ contracts $ W $ to a point, then $ \Phi $ contracts $ D(h) $ to a point.  This contradicts our assumption, so we may assume that $ \Psi $ is surjective and $ \Phi $ is dominant.  Because $ c $ is finite and $ \Psi \mid_{D(h)} \cong \Phi $, there is an additive polynomials $ d(t) $ such that $ c $ is a morphism from $ \mathbb{G}_{a}^{d(t)} $ to $ \mathbb{G}_{a}^{b(t)} $.  By our assumption this morphism is the identity.  As a result, the fibres of $ \Phi $ are connected.

    Suppose that $ \Phi $ is not generically smooth.  There is some $ u $ such that
    \begin{equation*}
        K\left(\mathbb{G}_{a}^{b(t)}\right) \cong k(u).
    \end{equation*}
    If $ \Phi $ is not generically smooth, and $ L $ is the inseparable closure of $ k(u) $ in $ \operatorname{Frac}(A) $, then $ L $ strictly contains $ k(u) $.  Because the transcendence degree of $ k(u) $ is one, the transcendence degree of $ L $ is one.  By L\"{u}roth's theorem $ L $ is isomorphic to $ k(z) $ for some $ z $, and $ u $ is equal to $ z^{p^{i}} $, where the inseparable degree of $ L $ over $ k(u) $ is $ p^{i} $.  So, there is some $ i \in \mathbb{N} $ such that $ \Phi $ factors through an endomorphism $ t^{p^{i}} $.  This is a contradiction of our assumption that $ \Phi $ does not factor through any non-identity, endomorphism $ c(t) $ of $ \mathbb{G}_{a} $.  So $ \Phi $ is generically smooth.  This finishes the proof.
\end{proof}
\begin{prop} \label{P:earlyTrivialBundle}
    If $ \operatorname{Spec}(A) $ is a $ \mathbb{G}_{a} $-variety with action $ \beta $ and $ h \in A^{\mathbb{G}_{a}} $ is a non-zero invariant, then the following statements are equivalent
    \begin{itemize}
        \item[a)] the variety $ D(h) $ is a trivial $ \mathbb{G}_{a} $-bundle over $ D(h)//\mathbb{G}_{a} $,
        \item[b)] there is a dominant, generically smooth, $ \mathbb{G}_{a} $-equivariant morphism $ \Phi: D(h) \to \mathbb{G}_{a} $ such that the fibres of $ \Phi $ are connected,
        \item[c)] there is a principle pair $ (g,h^{e}) $ for some $ g \in A $, and $ e \in \mathbb{N}_{0} $,
        \item[d)] there exists a $ g \in A $ such that if $ \pi: D(h) \to D(h)//\mathbb{G}_{a} $ is the quotient morphism and $ \iota: \mathcal{V}(\langle g \rangle) \cap D(h) \to D(h) $ is the inclusion morphism, then  the stabilizer of any point $ x \in \mathcal{V}(\langle g \rangle) \cap D(h) $ is trivial, and $ \pi \circ \iota $ is an isomorphism.
    \end{itemize}
\end{prop}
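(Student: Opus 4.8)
The plan is to run the cycle of implications $(c) \Rightarrow (a) \Rightarrow (b) \Rightarrow (c)$ and then $(c) \Rightarrow (d) \Rightarrow (a)$. The engine for the first three implications is a single geometric construction. Given a principle pair $(g, h^{e})$, Proposition~\ref{P:specificTrivialBundle} (applied with $c(t) = t$, since $\mathbb{G}_{a} = \mathbb{G}_{a}^{t}$) produces a dominant $\mathbb{G}_{a}$-equivariant morphism $\Phi \colon D(h) \to \mathbb{G}_{a}$ with $\Phi(x) = (g/h^{e})(x)$; put $Z := \Phi^{-1}(0)$, which as a set is $\mathcal{V}(\langle g \rangle) \cap D(h)$. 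I claim the action map
\[
\psi \colon \mathbb{G}_{a} \times Z \longrightarrow D(h), \qquad (t_{0}, z) \longmapsto t_{0} \ast z,
\]
is a $\mathbb{G}_{a}$-equivariant isomorphism. Equivariance (for the translation action on the first factor) and bijectivity are formal, using that $\beta$ is a group action and the relation $\Phi(t_{0} \ast x) = \Phi(x) + t_{0}$: surjectivity because $(-\Phi(x)) \ast x$ lands in $Z$, injectivity because $\Phi$ separates the translation parameter. For the inverse I take $x \mapsto \bigl(\Phi(x), (-\Phi(x)) \ast x\bigr)$; both components are morphisms, and the second factors through $Z$ scheme-theoretically because its composite with $\Phi$ is the constant morphism at $0 \in \mathbb{G}_{a}$, so it factors through the fibre $\Phi^{-1}(0)$ by the universal property of fibre products. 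Hence $D(h) \cong \mathbb{G}_{a} \times Z$ equivariantly; taking $\mathbb{G}_{a}$-invariants identifies $\mathcal{O}(Z)$ with $A_{h}^{\mathbb{G}_{a}}$ compatibly with $\pi$, which is $(a)$, and reading the stabiliser and quotient data off the model $\mathbb{G}_{a} \times Z$ gives $(d)$ for this choice of $g$. So $(c) \Rightarrow (a)$ and $(c) \Rightarrow (d)$.

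For $(a) \Rightarrow (b)$: from an equivariant trivialisation $D(h) \cong \mathbb{G}_{a} \times (D(h)//\mathbb{G}_{a})$ the projection onto the $\mathbb{G}_{a}$-factor is a dominant equivariant morphism all of whose fibres are copies of $D(h)//\mathbb{G}_{a}$; the latter is integral since $A$, hence $A_{h}$, hence the subring $A_{h}^{\mathbb{G}_{a}}$, is a domain, so the fibres are connected, and it is generically smooth over the algebraically closed field $k$, so the projection is generically smooth. For $(b) \Rightarrow (c)$: since $\mathbb{G}_{a} = \mathbb{G}_{a}^{t}$, Proposition~\ref{P:specificTrivialBundle} with $c(t) = t$ turns the morphism of $(b)$ into a $t$-pair, that is, a principle pair. (Alternatively, Lemma~\ref{L:connectedNonIdentity} gives $(b) \Leftrightarrow (c)$ directly, once one observes that the morphism attached to a principle pair cannot factor through a non-identity endomorphism of $\mathbb{G}_{a}$: such a factorisation through a degree-$\ge p$ additive polynomial, being additive, would have to carry the relation $\beta^{\sharp}(g/h^{e}) = g/h^{e} + t$ to an analogous one, which is ruled out by comparing $t$-adic leading terms.) This closes $(a) \Leftrightarrow (b) \Leftrightarrow (c)$.

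It remains to prove $(d) \Rightarrow (a)$, which I expect to be the principal obstacle. One is given $g \in A$ with $Z := \mathcal{V}(\langle g \rangle) \cap D(h)$ carrying uniformly trivial stabilisers and with $\pi \circ \iota \colon Z \to D(h)//\mathbb{G}_{a}$ an isomorphism, and one again wants the action map $\psi \colon \mathbb{G}_{a} \times Z \to D(h)$ to be an isomorphism. Equivariance and injectivity are again immediate: if $t_{0} \ast z = t_{1} \ast z'$, applying $\pi$ gives $\pi(z) = \pi(z')$, so $z = z'$ since $\pi|_{Z}$ is injective, and then $t_{1} - t_{0}$ lies in the trivial stabiliser of $z$. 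The difficulty is promoting ``equivariant and injective'' to ``isomorphism'' without assuming $D(h)$ is normal. The plan is: first note $A_{h}^{\mathbb{G}_{a}} \cong \mathcal{O}(Z)$ by restriction, so $D(h)//\mathbb{G}_{a}$ is an honest integral affine variety (and the action is non-trivial, so generic orbits are one-dimensional and $\dim Z = \dim D(h) - 1$); since $\mathbb{G}_{a}$ is unipotent, all of its orbits on the affine variety $D(h)$ are closed, and uniform triviality of stabilisers makes each orbit $\mathbb{G}_{a} \cdot z \cong \mathbb{A}^{1}_{k}$ a closed one-dimensional subvariety of its $\pi$-fibre meeting $Z$ in the single point $z$; connectedness of $\mathbb{G}_{a}$ forces the generic fibre of $\pi$ to be geometrically integral (using that $\operatorname{Frac}(A_{h}^{\mathbb{G}_{a}})$ is algebraically closed in $\operatorname{Frac}(A_{h})$), hence equal to such an orbit, so $\psi$ is birational. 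The remaining point — that $\psi$ is in fact surjective and an isomorphism — comes down to excluding ``defective'' fibres of $\pi$ (a $\mathbb{G}_{a}$-fixed point lying off $Z$, or a reducible or non-reduced fibre) over a proper closed locus of $D(h)//\mathbb{G}_{a}$. I would dispose of these by a properness argument in the spirit of the proof of Lemma~\ref{L:connectedNonIdentity}: compactify $D(h)$, resolve the indeterminacies of the rational map to $\mathbb{P}^{1}_{k}$ that is defined on the open locus where $\psi$ is already an isomorphism, and apply Stein factorisation; a defective fibre would produce either a disconnected generic fibre or a non-trivial finite cover on the quotient side, contradicting the existence of the section $Z$ together with the triviality of all stabilisers along it. Once $\psi$ is bijective and birational it is an isomorphism, and taking invariants identifies $Z$ with $D(h)//\mathbb{G}_{a}$, giving $(a)$.
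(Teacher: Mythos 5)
Your $(c) \Rightarrow (a) \Rightarrow (b) \Rightarrow (c)$ cycle is correct, and your $(c) \Rightarrow (a)$ argument is genuinely cleaner than the paper's. The paper proves $(b) \Rightarrow (a)$ by constructing $\gamma\colon \mathbb{G}_a \times Y \to D(h)$ and $\lambda\colon D(h) \to \mathbb{G}_a \times Y$, then claiming ``$\lambda$ is separable, as a result, $\lambda$ is an isomorphism'' --- an inference which quietly needs more (bijective $+$ separable does not give isomorphism without a normality or Zariski-Main-Theorem input). You sidestep this entirely by exhibiting the two-sided inverse $x \mapsto (\Phi(x), (-\Phi(x))\ast x)$ and checking scheme-theoretic factorization through the fibre; this is rigorous with no hidden hypotheses. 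Your $(c) \Rightarrow (d)$ by reading the data off $\mathbb{G}_a \times Z$ is also fine.

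The gap is in $(d) \Rightarrow (a)$, and you flag it yourself, but what you offer there is a plan, not a proof, and it would fail as written. Two concrete issues. First, surjectivity of $\psi\colon \mathbb{G}_a \times Z \to D(h)$ is not established: the hypothesis that $\pi \circ \iota$ is an isomorphism says each fibre of $\pi$ meets $Z$, but a fibre may contain several closed $\mathbb{G}_a$-orbits, only one of which passes through $Z$. Your generic-fibre-integral argument, even if carried out, only treats the generic fibre; the ``properness/Stein factorisation'' remark for the defective locus is a direction, not an argument, and it is exactly the part that would need to be written out. Second, and more seriously, your closing line ``once $\psi$ is bijective and birational it is an isomorphism'' is false in general: a bijective birational morphism of varieties need not be an isomorphism (normalization of a cuspidal curve, for instance), and you yourself had just observed that $D(h)$ is not assumed normal. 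So after the dust settles you would still owe an argument that $\psi$ induces isomorphisms on local rings. For what it is worth, the paper's own proof of $(d) \Rightarrow (a)$ has the same two soft spots --- it infers surjectivity of $\beta\colon \mathbb{G}_a \times K \to D(h)$ from a commuting triangle that only controls the composite with $\pi$, and it asserts separability without justification --- so you have not missed an argument that the paper supplies; but as a standalone blind proof, your $(d) \Rightarrow (a)$ is not closed.
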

\begin{proof}
    If $ D(h) $ is a trivial $ \mathbb{G}_{a} $-bundle, over $ D(h)//\mathbb{G}_{a} $ then there is an isomorphism $ \lambda: D(h) \to \mathbb{G}_{a} \times (D(h)//\mathbb{G}_{a}) $.  If $ p_{1}: \mathbb{G}_{a} \times (D(h)//\mathbb{G}_{a}) \to \mathbb{G}_{a} $ is the projection onto the first component, then we claim $ p_{1} \circ \lambda $ is the desired morphism $ \Phi $.  Because $ A_{h} $ is an integral domain, it has no idempotents.  Therefore $ D(h) $ is connected.  Since the image of $ p_{2} \circ \lambda $ is $ D(h)//\mathbb{G}_{a} $, and the image of a connected sub-variety is also connected, $ D(h)//\mathbb{G}_{a} $ is connected.  Since projections and isomorphisms are generically smooth and dominant, the morphism $ \Phi $ is generically smooth and dominant.  Therefore, a) implies b).

    If b) holds, then let $ Y $ be the scheme theoretic pre-image of zero with its reduced, induced scheme structure.  Because $ \Phi $ has connected fibres, the scheme $ Y $ is irreducible and is therefore a variety.  Let $ \gamma: \mathbb{G}_{a} \times Y \to D(h) $ be the morphism which sends $ (t,y) $ to $ t \ast y $.  We claim that $ \gamma $ is injective.  If there exist $ (t_{1},y_{1}) $ and $ (t_{2},y_{2}) $ in $ \mathbb{G}_{a} \times Y $ such that $ t_{1} \ast y_{1}=t_{2} \ast y_{2} $, then $ y_{1}= (t_{2}-t_{1}) \ast y_{2} $  Since $ Y $ is the scheme theoretic pre-image of zero under $ \Phi $,
    \begin{align*}
        t_{2}-t_{1} &= (t_{2}-t_{1})+ \Phi(y_{2}) \\
        &= \Phi\left((t_{2}-t_{1}) \ast y_{2}\right) \\
        &= \Phi(y_{1}) \\
        &= 0.
    \end{align*}
    Therefore, $ t_{2} $ is equal to $ t_{1} $, whence
    \begin{align*}
        y_{1} &= (t_{2}-t_{1}) \ast y_{2} \\
        &= (t_{2}-t_{2}) \ast y_{2} \\
        &= y_{2}.
    \end{align*}
    So $ \gamma $ is injective.  Let $ \lambda: D(h) \to \mathbb{G}_{a} \times Y $ be the morphism which sends $ x $ to $ (\Phi(x), \Phi(x)^{-1} \ast x) $.  Since $ \gamma \circ \lambda $ is the identity, $ \gamma $ is surjective. Because $ \Phi $ is generically smooth, $ \lambda $ is separable.  As a result, $ \lambda $ is an isomorphism.  Therefore, b) implies a).

    If c) holds, then let $ \Phi: D(h) \to \mathbb{G}_{a} $ be the morphism of varieties which sends a point $ x \in D(h) $ to $ (g/h^{e})(x) $.  We do not yet know if $ \Phi $ is $ \mathbb{G}_{a} $-equivariant or generically smooth.  Because:
    \begin{align*}
        \Phi \circ \beta(t_{0},x) &= \Phi(t_{0} \ast x) \\
        &= (g/h)(t_{0} \ast x) \\
        &= (g/h)(x)+t_{0} \\
        &= \mu_{\mathbb{G}_{a}}\left(t_{0},(g/h)(x)\right) \\
        &= \mu_{\mathbb{G}_{a}} \circ \left(\operatorname{id}_{\mathbb{G}_{a}},\Phi\right)(t_{0},x),
    \end{align*}
    the morphism $ \Phi $ is $ \mathbb{G}_{a} $-equivariant.  Suppose that $ \Phi $ factors through a non-identity endomorphism $ c(t) $.  Let $ c(t) $ be maximal with respect to this property.  If this is the case, then $ \Phi $ is equal to $ c(t) \circ \Phi_{1} $.  There exist $ g_{1} \in A $ and $ e_{1} \in \mathbb{N}_{0} $ such that $ \Phi_{1} $ sends $ x \in D(h) $ to $ (g_{1}/h^{e_{1}})(x) $.  For all $ x \in D(h) $
    \begin{align*}
        c(g_{1}/h^{e_{1}})(x) &= c \circ \Phi_{1}(x) \\
        &= \Phi(x) \\
        &= (g/h^{e})(x),
    \end{align*}
    so $ g/h^{e} $ is equal to $ c(g_{1}/h^{e_{1}}) $.  By Proposition ~\ref{P:specificTrivialBundle} (see page \pageref{P:specificTrivialBundle}) $ (g_{1},h^{e_{1}}) $ is a $ d(t) $-pair for some additive polynomial $ d(t) $.  If $ c(t) $ is not equal to $ t $, then for all $ t_{0} \in \mathbb{G}_{a} $ and $ x \in D(h) $,
    \begin{align*}
        (g/h^{e})(x)+t_{0} &= (g/h^{e})(t_{0} \ast x) \\
        &= c(g_{1}/h_{1}^{e_{1}})(t_{0} \ast x) \\
        &= c((g_{1}/h_{1}^{e_{1}})(t_{0} \ast x)) \\
        &= c((g_{1}/h_{1}^{e_{1}})(x)+d(t_{0})) \\
        &= c((g_{1}/h_{1}^{e_{1}})(x))+c(d(t_{0})) \\
        &= (g/h^{e})(x)+c(d(t_{0})).
    \end{align*}
    So $ c(d(t)) = t $.  This cannot happen for any $ d(t) \in k[t] $ so $ d(t) \notin k[t] $.  This would mean that $ \beta^{\sharp}(g_{1}) \notin A[t] $, which contradicts the fact that $ \beta^{\sharp}(A) \subseteq A[t] $.  So there is no such $ c(t) $.  By Lemma ~\ref{L:connectedNonIdentity} (see page \pageref{L:connectedNonIdentity}), the morphism $ \Phi $ is dominant, generically smooth and the fibres of $ \Phi $ are connected.  Therefore, c) implies b).

    Assume that b) holds.  If $ \Phi: D(h) \to \mathbb{G}_{a} $ is a dominant, generically smooth, $ \mathbb{G}_{a} $-equivariant morphism whose fibres are connected, then there is a $ g \in A $ and $ e \in \mathbb{N}_{0} $ such that $ \Phi $ sends $ x \in D(h) $ to $ (g/h^{e})(x) $.  If $ x \in D(h) $ and $ t_{0} \in \mathbb{G}_{a} $, then because $ \Phi $ is $ \mathbb{G}_{a} $-equivariant:
    \begin{align*}
        (g/h^{e})(t_{0} \ast x) &= \Phi(t_{0} \ast x) \\
        &= \Phi \circ \beta(t_{0},x) \\
        &= \mu_{\mathbb{G}_{a}} \circ (\operatorname{id}_{\mathbb{G}_{a}},\Phi)(t_{0},x) \\
        &= \mu_{\mathbb{G}_{a}}\left(t_{0},(g/h^{e})(x)\right) \\
        &= (g/h^{e})(x)+t_{0}.
    \end{align*}
    As a result, $ (g,h^{e}) $ is a principle pair.  Therefore, b) implies c).  As a result, a), b) and c) are equivalent.

    Assume that c) holds.  Let us denote $ \mathcal{V}(\langle g \rangle) \cap D(h) $ by $ K $.  If $ \Phi: D(h) \to \mathbb{G}_{a} $ is the morphism which sends $ x \in D(h) $ to $ (g/h^{e})(x) $, then $ \Phi^{-1}(0) $ with its reduced induced scheme structure is isomorphic to $ D(h)//\mathbb{G}_{a} $.  Therefore,
    \begin{align*}
        D(h)//\mathbb{G}_{a} & \cong \Phi^{-1}(0) \\
        &\cong K,
    \end{align*}
    and the isomorphism is the composition of $ \iota $ with $ \pi $.  Since $ D(h) $ is a trivial $ \mathbb{G}_{a} $-bundle over its image in $ D(h)//\mathbb{G}_{a} \cong K $, the stabilizer is trivial for any point $ x \in K $.  Therefore b) implies d).

    Assume that d) holds.  Let us denote $ D(h) \cap \mathcal{V}(\langle g \rangle) $ by $ K $ and let $ \Lambda $ be the sub-variety of $ \mathbb{G}_{a} \times K $ whose points are $ (t_{0},y) $ such that $ t_{0} \in (\mathbb{G}_{a})_{y} $.  If $ \iota $ is the inclusion of $ K $ in $ D(h) $ and $ \pi: D(h) \to D(h)//\mathbb{G}_{a} $ is the quotient morphism, then let $ \phi $ be the inverse of $ \pi \circ \iota $.  If $ y \in D(h) $, and $ x $ is equal to $ \phi \circ \pi(y) $, then $ \pi(x)=\pi(y) $.  If $ p_{2} $ is the projection of $ \mathbb{G}_{a} \times K $ onto $ K $, then the following diagram commutes:
    \begin{equation*}
    \xymatrix{
        \mathbb{G}_{a} \times K \ar[rr]^{\beta} \ar[drr]^{\pi \circ p_{2}} & & D(h) \ar[d]^{\pi} \\
        & & D(h)//\mathbb{G}_{a}
        }.
    \end{equation*}
    Therefore, $ \beta: \mathbb{G}_{a} \times K \to D(h) $ is surjective.  Also note that $ \beta \mid_{\Lambda} = p_{2} \mid_{\Lambda} $.  There may not be a good quotient $ (\mathbb{G}_{a} \times D(h))/\Lambda $ in the category of schemes, however there is one in the category of algebraic spaces.  Let $ \nu: \mathbb{G}_{a} \times K \to (\mathbb{G}_{a} \times K)/\Lambda $ be the quotient morphism.  By the universal property of the co-equalizer there is a morphism $ \psi $, such that the following diagram commutes:
    \begin{equation*}
    \xymatrix{
        \mathbb{G}_{a} \times K \ar[dr]^{\nu} \ar[r]^{\beta} & D(h) \\
        & (\mathbb{G}_{a} \times K)/\Lambda \ar[u]^{\psi}
        }.
    \end{equation*}
    If $ y \in D(h) $, and if $ x $ is equal to $ \pi \circ \iota(y) $, then let $ (t_{0},x),(t_{1},x) $ be two points such that
    \begin{align*}
        t_{0} \ast x &= y \\
        &=t_{1} \ast x.
    \end{align*}
    If $ q_{2}: \Lambda \to K $, then by our assumption that the stabilizer of any point $ x \in K $ is trivial,
    \begin{align*}
        (t_{0}-t_{1},x) &\in \Lambda \mid_{x} \\
        &= (0,x).
    \end{align*}
    So $ \beta: \mathbb{G}_{a} \times K \to D(h) $ is injective, surjective and separable.  Therefore
    \begin{align*}
        D(h) & \cong \mathbb{G}_{a} \times K \\
        & \cong \mathbb{G}_{a} \times (D(h)//\mathbb{G}_{a}).
    \end{align*}
    So a) holds. 
\end{proof}
The reader will be familiar with Rosenlicht's cross section theorem (see \cite{RosenlichtCS}).  Item d) relates Rosenlicht's cross section theorem with principle pairs.  One can see that Rosenlicht's cross section theorem is weaker than the condition that $ D(h) $ is a principle $ \mathbb{G}_{a} $-bundle over its image in the quotient.  For an example see Example ~\eqref{EG:1} (page \pageref{EG:1}).
\begin{rmk}
    Let us talk about why the condition that Rosenlicht's cross section theorem is not equivalent to the condition that there is an open $ \mathbb{G}_{a} $-equivariant sub-variety $ D(h) $ such that $ D(h) $ is a trivial $ \mathbb{G}_{a} $-variety over $ D(h)//\mathbb{G}_{a} $.  Let $ k $ be an algebraically closed field of characteristic $ p>0 $ and let $ c_{1}(t),c_{2}(t) $ be two $ k $-linearly independent additive polynomials.  Let $ \mathbf{V} $ be a three dimensional vector space with dual basis $ \{x_{1},x_{2},x_{3}\} $ and let $ \beta: \mathbb{G}_{a} \to \operatorname{GL}(\mathbf{V}) $ be the linear representation with the following co-action:
    \begin{align*}
        x_{1} & \mapsto x_{1} \\
        x_{2} & \mapsto x_{2} \\
        x_{3} & \mapsto x_{3}+c_{2}(t)x_{2}+c_{1}(t)x_{1}.
    \end{align*}
    If $ y \in \mathbf{V} $ is a point equal to $ (a_{1},a_{2},a_{3}) $, then let $ w(y) $ be the point of $ \mathbb{G}_{a} $ such that
    \begin{align*}
        a_{3} = c_{2}(w(y))a_{2}+c_{1}(w(y))a_{1}.
    \end{align*}
    The morphism $ \lambda: \mathbf{V} \to \mathbb{G}_{a} \times \operatorname{Spec}(k[x_{1},x_{2}]) $ which sends $ y $ to $ (w(y),a_{1},a_{2}) $ is an isomorphism of varieties.  However, we claim that it is not $ \mathbb{G}_{a} $-equivariant on any open sub-variety $ D(h) $.  This will mean that $ \mathbf{V} $ is not a trivial $ \mathbb{G}_{a} $-variety.  Let $ h \in k[x_{1},x_{2}] $ and suppose that $ \lambda: D(h) \to \mathbb{G}_{a} \times \operatorname{Spec}(k[x_{1},x_{2}]_{h}) $ is $ \mathbb{G}_{a} $-equivariant.
    
    If $ \lambda $ is $ \mathbb{G}_{a} $-equivariant, then the following diagram commutes:
    \begin{equation*}
    \xymatrix{
        \mathbb{G}_{a} \times D(h) \ar[rr]^{(\operatorname{id},\lambda)} \ar[d]^{\beta} & & \mathbb{G}_{a} \times \mathbb{G}_{a} \times \operatorname{Spec}(k[x_{1},x_{2}]_{h}) \ar[d]^{(\mu_{\mathbb{G}_{a}}, \operatorname{id})} \\
        D(h) \ar[rr]^{\lambda} & & \mathbb{G}_{a} \times \operatorname{Spec}(k[x_{1},x_{2}]_{h}) 
        }.
    \end{equation*}
    So, if $ \lambda $ is $ \mathbb{G}_{a} $-equivariant, $ t_{0} \in \mathbb{G}_{a} $ and $ y \in D(h) $ is equal to $ (a_{1},a_{2},a_{3}) $, then:
    \begin{align*}
        (w(t_{0} \ast y), a_{1},a_{2}) &= \lambda(a_{1},a_{2},a_{3}+c_{2}(t_{0})a_{2}+c_{1}(t_{0})a_{1}) \\
        &= \lambda \circ \beta(t_{0},y) \\
        &= (\mu_{\mathbb{G}_{a}}, \operatorname{id}) \circ (\operatorname{id},\lambda)(t_{0},y) \\
        &= (\mu_{\mathbb{G}_{a}}, \operatorname{id})(t_{0},(w(y),a_{1},a_{2})) \\
        &= (w(y)+t_{0},a_{1},a_{2}).
    \end{align*}
    Because
    \begin{equation}
        w(t_{0} \ast y) = w(y)+t_{0},
    \end{equation}
    must hold, $ (w,1) $ is a principle pair.  The rational function $ w $ is equal to $ (g(X),h^{e}) $ for some polynomial $ g(X) $ and $ e \in \mathbb{N}_{0} $.  Therefore $ (g(X),h^{e}) $ is a principle pair.  We will prove later that pairs do not exist at all for this representation.  As a result, the morphism $ \lambda $ is an isomorphism of varieties, but not $ \mathbb{G}_{a} $-varieties.  What occurs in this example is that the stabilizers $ (\mathbb{G}_{a})_{y} $ vary wildly for $ y \in \mathbf{V} $.  If $ \iota: \mathcal{V}(\langle x_{3} \rangle) \to \mathbf{V} $ is the inclusion morphism and $ \pi: \mathbf{V} \to \operatorname{Spec}(k[x_{1},x_{2}]) $ is the quotient morphism, then $ \pi \circ \iota $ is an isomorphism.  However, there is no open sub-variety $ U \subseteq \mathbf{V} $ with the property that $ (\mathbb{G}_{a})_{y} $ is trivial for all $ y \in U $.  As a result, there is no open, affine, $ \mathbb{G}_{a} $-stable, sub-variety $ D(h) $ such that $ D(h) $ is a trivial $ \mathbb{G}_{a} $-bundle over $ D(h)//\mathbb{G}_{a} $.
\end{rmk}
\begin{prop}
    Let $ \operatorname{Spec}(A) $ be a $ \mathbb{G}_{a} $-variety with action $ \beta $, and let $ h \in A^{\mathbb{G}_{a}} $.  The following are equivalent:
    \begin{itemize}
        \item[a)] there is an $ \mathbb{G}_{a} $-equivariant, fppf neighborhood $ \phi:U \to D(h) $ such that $ \phi^{-1}(x) \cong \mathbf{ker}(c(t)) $ for some additive polynomial $ c(t) \in \mathfrak{O} $ and such that $ U $ is a trivial $ \mathbb{G}_{a} $-bundle over $ U//\mathbb{G}_{a} $,
        \item[b)] there is a $ g \in A $ and $ e \in \mathbb{N}_{0} $ such that $ (g,h^{e}) $ is a $ c(t) $-pair.
    \end{itemize}
\end{prop}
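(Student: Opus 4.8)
The plan is to move between three equivalent pictures of the same phenomenon: the $c(t)$-pair of (b), the dominant $\mathbb{G}_{a}$-equivariant morphism $D(h)\to\mathbb{G}_{a}^{c(t)}$ furnished by Proposition~\ref{P:specificTrivialBundle}, and the morphism $c(t):\mathbb{G}_{a}=\mathbb{G}_{a}^{t}\to\mathbb{G}_{a}^{c(t)}$ of Definition~\ref{D:twistEquivt}. The key external input is that, by the exact sequence of Example~\ref{EG:boft}, this last morphism is faithfully flat of finite presentation, surjective, with every fibre isomorphic to $\mathbf{ker}(c(t))$. Since the cover $U$ occurring in (a) need not be integral (it is non-reduced precisely when $c(t)$ is inseparable, and disconnected when $\Phi$ factors through a nontrivial endomorphism), I would phrase everything for $\mathbb{G}_{a}$-schemes rather than $\mathbb{G}_{a}$-varieties; this is harmless because every step below is ring-theoretic.

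For the implication $(\mathrm{b})\Rightarrow(\mathrm{a})$ I would argue constructively. From the $c(t)$-pair $(g,h^{e})$, Proposition~\ref{P:specificTrivialBundle} produces the dominant $\mathbb{G}_{a}$-equivariant morphism $\Phi:D(h)\to\mathbb{G}_{a}^{c(t)}$, $x\mapsto(g/h^{e})(x)$, and I would set $U:=D(h)\times_{\mathbb{G}_{a}^{c(t)}}\mathbb{G}_{a}$, the fibre product along $\Phi$ and $c(t):\mathbb{G}_{a}\to\mathbb{G}_{a}^{c(t)}$. Base change then makes $\phi:=p_{1}:U\to D(h)$ an fppf neighbourhood with $\phi^{-1}(x)\cong\mathbf{ker}(c(t))$, while compatibility of $\Phi$, $c(t)$ and the $\mathbb{G}_{a}$-actions equips $U$ with the diagonal $\mathbb{G}_{a}$-action and makes both $\phi$ and $p_{2}:U\to\mathbb{G}_{a}^{t}$ equivariant. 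Pulling the coordinate on $\mathbb{G}_{a}^{t}$ back along $p_{2}$ yields $w\in\mathcal{O}(U)$ with $w(t_{0}\ast u)=w(u)+t_{0}$, so $(w,1)$ is a principle pair on $U$; since the slice map $a\mapsto\beta^{\sharp}(a)\mid_{t=-w}$ is a ring homomorphism onto the invariants even in characteristic $p$, one gets $\mathcal{O}(U)=\mathcal{O}(U)^{\mathbb{G}_{a}}[w]$, that is, $U$ is a trivial $\mathbb{G}_{a}$-bundle over $U//\mathbb{G}_{a}$ (compare the implication $(\mathrm{c})\Rightarrow(\mathrm{a})$ of Proposition~\ref{P:earlyTrivialBundle}, which goes through verbatim for $\mathbb{G}_{a}$-schemes). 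This gives (a), with the same $c(t)$.

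For $(\mathrm{a})\Rightarrow(\mathrm{b})$ I would start from the triviality of $U$: then $U\cong\mathbb{G}_{a}\times(U//\mathbb{G}_{a})$ as $\mathbb{G}_{a}$-schemes, so there is $w\in\mathcal{O}(U)$ with $w(t_{0}\ast u)=w(u)+t_{0}$, and hence $c(w)$ satisfies $c(w)(t_{0}\ast u)=c(w)(u)+c(t_{0})$. The crux is that $c(w)$ descends along $\phi$, say $c(w)=\phi^{\sharp}(g/h^{e})$. Granting this, choosing for each $x\in D(h)$ a point $u\in\phi^{-1}(x)$ (possible since $\phi$ is surjective) and using equivariance of $\phi$ gives $(g/h^{e})(t_{0}\ast x)=c(w)(t_{0}\ast u)=c(w(u)+t_{0})=(g/h^{e})(x)+c(t_{0})$; clearing the denominator (enlarging $e$ and $g$ so that $g\in A$, which is permissible because $h\in A^{\mathbb{G}_{a}}$) shows $(g,h^{e})$ is a $c(t)$-pair. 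To obtain the descent I would invoke fppf descent for the structure sheaf: it suffices that $p_{1}^{\sharp}(c(w))=p_{2}^{\sharp}(c(w))$ on $U\times_{D(h)}U$. Both projections are equivariant for the diagonal action, so $w_{i}:=p_{i}^{\sharp}w$ both transform by translation and $w_{1}-w_{2}$ is $\mathbb{G}_{a}$-invariant; and since the fibres of $\phi$ are $\mathbf{ker}(c(t))$-torsors on which $w$ restricts to an affine coordinate, the identification $U\times_{D(h)}U\cong\mathbf{ker}(c(t))\times U$ carries $w_{1}-w_{2}$ to the coordinate pulled back from $\mathbf{ker}(c(t))$, whence $c(w_{1}-w_{2})=0$ scheme-theoretically and $c(w_{1})=c(w_{2})$.

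The hard part is exactly the torsor structure used at the end: one must deduce from the bare hypotheses of (a) — $\phi$ fppf and $\mathbb{G}_{a}$-equivariant, geometric fibres abstractly $\cong\mathbf{ker}(c(t))$, and $U$ a trivial $\mathbb{G}_{a}$-bundle — the scheme-theoretic fact that $\phi$ is an fppf $\mathbf{ker}(c(t))$-torsor on which $w$ is an affine coordinate fibrewise, equivalently that $U$ is $\mathbb{G}_{a}$-equivariantly isomorphic to the fibre product $D(h)\times_{\mathbb{G}_{a}^{c(t)}}\mathbb{G}_{a}$ of the second paragraph. Once $U$ is so identified, $w$ is literally the $\mathbb{G}_{a}^{t}$-coordinate and $c(w)$ is by the defining fibre-product relation the $\phi$-pullback of $(g/h^{e})\in\mathcal{O}(D(h))$, so the descent is automatic; this is also how I would read the intended content of ``fppf neighbourhood'' in the statement. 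The remaining points — non-reducedness or disconnectedness of $U$, and the passage from $\operatorname{Frac}(A)$ back to $A$ in the last line — are routine, the former because the slice homomorphism and fppf descent are insensitive to integrality, the latter because $h$ is an invariant.
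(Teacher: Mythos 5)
Your argument follows the paper's proof closely, with cleaner reasoning in places. For $(\mathrm{b}) \Rightarrow (\mathrm{a})$, the fibre product $U := D(h) \times_{\mathbb{G}_{a}^{c(t)}} \mathbb{G}_{a}$ you construct \emph{is} the paper's closed subvariety $\mathcal{V}(\langle c(w) - g/h^{e} \rangle) \subset \mathbb{A}^{1}_{k} \times D(h)$, and your deduction of flatness by base change along the finite flat morphism $c(t): \mathbb{G}_{a} \to \mathbb{G}_{a}^{c(t)}$ is tidier than the paper's Hilbert-polynomial argument via closure in $\mathbb{P}^{1}_{D(h)}$. For $(\mathrm{a}) \Rightarrow (\mathrm{b})$, both you and the paper extract a translation coordinate from the trivialization $U \cong \mathbb{G}_{a} \times (U//\mathbb{G}_{a})$, apply $c$ to it, and descend along $\phi$; you carry out the descent via the fppf cocycle condition on $U \times_{D(h)} U$, the paper at the level of points $y_{1}, y_{2} \in \phi^{-1}(x)$.

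You are right to flag the torsor issue, and it is also present, unflagged, in the paper. The paper asserts that any $y_{1}, y_{2} \in \phi^{-1}(x)$ sit in a single $\mathbb{G}_{a}$-orbit, say $\lambda(y_{i}) = (t_{i}, z)$ with the same $z$, with $t_{1} - t_{2} \in \mathbf{ker}(c(t))$. But the literal hypotheses of (a) --- $\phi$ fppf and equivariant, geometric fibres abstractly $\cong \mathbf{ker}(c(t))$, $U$ a trivial bundle --- do not visibly prevent $\phi^{-1}(x)$ from meeting several orbits of $U$, nor force the $w$-values on $\phi^{-1}(x)$ to differ by elements of $\mathbf{ker}(c(t))$. (Concretely, with $D(h) = \mathbb{G}_{a}$, $p = 3$, $c(t) = t^{3} - t$, $U = \coprod_{i=1}^{3} \mathbb{G}_{a}$, and $\phi^{\sharp}(x) = (w_{1}, w_{2} + \alpha, w_{3} + \beta)$ with $\alpha, \beta \notin \mathbb{F}_{3}$, all the hypotheses of (a) hold for the evident trivialization $w = (w_{1}, w_{2}, w_{3})$, yet $c(w)$ does not descend --- one would have to first re-choose the trivialization.) So (a) should be read, as you propose, as asserting that $\phi$ is an fppf $\mathbf{ker}(c(t))$-torsor, equivalently that $U$ is $\mathbb{G}_{a}$-equivariantly isomorphic to $D(h) \times_{\mathbb{G}_{a}^{c(t)}} \mathbb{G}_{a}$; under that reading your descent, and the paper's, closes cleanly.
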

\begin{proof}
    Assume that b) holds.  Let $ \mathbb{G}_{a} $ acts on $ \mathbb{A}^{1}_{k} \times D(h) $ via the action $ \gamma $ which sends $ (t_{0},(s,x)) $ to $ (s+t_{0}, t_{0} \ast x) $.  If $ k[w] $ is the coordinate ring of $ \mathbb{A}^{1}_{k} $, then let $ U $ equal $ \mathcal{V}(\langle c(w)-g/h^{e} \rangle) $.  The ideal $ c(w)-g/h^{e} $ is stable under the action of $ \gamma $ because:
    \begin{align*}
        \gamma^{\sharp}(c(w)-g/h^{e}) &= c(w+t)-\beta^{\sharp}(g/h^{e}) \\
        &= c(w)+c(t)-(g/h^{e}+c(t)) \\
        &= c(w)-g/h^{e}.
    \end{align*}
    The pair $ (w,1) $ is a principle pair on $ U $.  So $ U \cong \mathbb{G}_{a} \times (U//\mathbb{G}_{a}) $ by Proposition ~\ref{P:earlyTrivialBundle}.  Let $ \phi: U \to D(h) $ be the natural projection morphism.  If $ x \in D(h) $, and $ (t_{1},x),(t_{2},x) \in \phi^{-1}(x) $, then
    \begin{align*}
        c(t_{1})-(g/h^{e})(x) &= 0 \\
        &= c(t_{2})-(g/h^{e})(x).
    \end{align*}
    So $ c(t_{1}-t_{2}) $ is equal to zero.  As a result, $ \phi^{-1}(x) \cong \mathbf{ker}(c(t)) $.  Let $ Z $ be the closure of $ U $ in $ \mathbb{P}^{1}_{D(h)} $.  For any $ x \in D(h) $, the fibre $ Z_{x} $ is equal to the closure of $ \mathbf{ker}(c(t)) $ in $ \mathbb{P}^{1}_{k} $.  Therefore, the fibres of $ Z_{x} $ all have the same Hilbert polynomial.  So $ Z $ is flat over $ D(h) $.  Because $ U $ is the complement of the point at infinity in $ Z $, the morphism $ \phi $ from $ U $ to $ D(h) $ is flat.  Since a flat, local morphism is faithfully flat, $ \phi: U \to D(h) $ is an fppf morphism.  So a) holds.
    
    Assume that a) holds.  If $ \lambda: U \to \mathbb{G}_{a} \times (U//\mathbb{G}_{a}) $ is the $ \mathbb{G}_{a} $-bundle isomorphism and if $ p_{1} $ is the projection from $ \mathbb{G}_{a} \times (U//\mathbb{G}_{a}) $ onto the first component, then the morphism $ p_{1} \circ \lambda $ is a $ \mathbb{G}_{a} $-equivariant, dominant, morphism from $ U $ to $ \mathbb{G}_{a} $.  Let us denote $ p_{1} \circ \lambda $ by $ \Psi $.  If $ x \in D(h) $ and $ y_{1},y_{2} \in \phi^{-1}(x) $, then because $ \phi^{-1}(x) \cong \mathbf{ker}(c(t)) $ there is a point $ z \in U//\mathbb{G}_{a} $ and points $ t_{1},t_{2} \in \mathbb{G}_{a} $ such that
    \begin{align*}
        \lambda(y_{1}) &= (t_{1},z) \\
        \lambda(y_{2}) &= (t_{2},z),
    \end{align*}
    and $ t_{1}-t_{2} \in \mathbf{ker}(c(t)) $.  So if $ \Phi $ is the morphism which sends $ x \in D(h) $ to $ \Psi(y) $ for any $ y \in \phi^{-1}(x) $, then $ \Phi $ does not depend on the choice of $ y \in \phi^{-1}(x) $.  The following diagram commutes:
    \begin{equation*}
    \xymatrix{
        U \ar[rr]^{\Psi} \ar[d]^{\phi} & & \mathbb{G}_{a} \ar[d]^{c(t)} \\
        D(h) \ar[rr]^{\Phi} & & (\mathbb{G}_{a})^{c(t)}
        }.
    \end{equation*} 
    So because $ \Psi, \phi $ and $ c(t) $ are $ \mathbb{G}_{a} $-equivariant, dominant morphisms, $ \Phi $ is also $ \mathbb{G}_{a} $-equivariant and dominant.  By Proposition ~\ref{P:specificTrivialBundle} b) holds.
\end{proof}
\begin{prop} \label{P:trivialBundle}
    If $ \operatorname{Spec}(A) $ is a $ \mathbb{G}_{a} $-variety with action $ \beta $, and $ D(h) $ is a $ \mathbb{G}_{a} $-stable, open, sub-variety, then the following statements are equivalent:
    \begin{itemize}
        \item[a)] the variety $ D(h) \cong (\mathbb{G}_{a}//\mathbf{ker}(b(t)) \times D(h)//\mathbb{G}_{a}) $,
        \item[b)] there is a dominant, generically smooth, $ \mathbb{G}_{a} $-equivariant, morphism \linebreak $ \Psi: D(h) \to \mathbb{G}_{a}^{b(t)} $ such that the fibres are connected and $ \mathbf{ker}(b(t)) $ acts trivially upon $ \operatorname{Spec}(A) $.
    \end{itemize}
\end{prop}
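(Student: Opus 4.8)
The plan is to reduce the statement to Proposition~\ref{P:earlyTrivialBundle} by trading $\mathbb{G}_{a}$ for the quotient $\mathbb{G}_{a}//\mathbf{ker}(b(t))$. The first observation is that, since $b(t)$ is a non-constant additive polynomial and $k$ is algebraically closed, the endomorphism $b(t)\colon\mathbb{G}_{a}\to\mathbb{G}_{a}$ is faithfully flat with kernel $\mathbf{ker}(b(t))$, so $\mathbb{G}_{a}//\mathbf{ker}(b(t))\cong\mathbb{G}_{a}$; moreover, pushing the translation action of the source $\mathbb{G}_{a}$ through $b(t)$ shows that $\mathbb{G}_{a}$ acts on this quotient by $t_{0}\ast w=w+b(t_{0})$, that is, $\mathbb{G}_{a}//\mathbf{ker}(b(t))\cong\mathbb{G}_{a}^{b(t)}$ as $\mathbb{G}_{a}$-varieties in the sense of Definition~\ref{D:twistEquivt}. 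Thus condition a) is precisely the statement that $D(h)\cong\mathbb{G}_{a}^{b(t)}\times(D(h)//\mathbb{G}_{a})$ as $\mathbb{G}_{a}$-varieties.

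Next I would invoke Lemma~\ref{L:principlePairEquiv}. In the direction where b) implies a), $\mathbf{ker}(b(t))$ is assumed to act trivially on $\operatorname{Spec}(A)$, so by that lemma $\beta$ factors as $\beta=\widetilde{\beta}\circ(b(t),\operatorname{id}_{\operatorname{Spec}(A)})$ for an action $\widetilde{\beta}$ of $\mathbb{G}_{a}$; since substitution $t\mapsto b(t)$ is injective on $A[t]$ and fixes $A$, the invariant rings of $\beta$ and $\widetilde{\beta}$ coincide, so $D(h)//\mathbb{G}_{a}$ is unambiguous. In the direction where a) implies b), the $\mathbb{G}_{a}$-equivariant isomorphism of a) makes $\mathbf{ker}(b(t))$ act trivially on $D(h)$ (it does so on $\mathbb{G}_{a}//\mathbf{ker}(b(t))$ by construction and on the second factor trivially); because $A$ is a domain, $D(h)$ is dense, and since $b(t)$ has a unit leading coefficient in $k\subseteq A$ the relation $\beta^{\sharp}(a)-a\in\langle b(t)\rangle A_{h}[t]$ descends to $\langle b(t)\rangle A[t]$ by polynomial division, so $\mathbf{ker}(b(t))$ acts trivially on all of $\operatorname{Spec}(A)$ and Lemma~\ref{L:principlePairEquiv} again supplies the factoring action $\widetilde{\beta}$.

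The heart of the argument is then to pass between the two actions. A morphism $\Phi\colon D(h)\to\mathbb{A}^{1}_{k}$ is determined by a single element $g/h^{e}\in A_{h}$; with $\mathbb{A}^{1}_{k}$ regarded as $\mathbb{G}_{a}$ with translation it is $\widetilde{\beta}$-equivariant exactly when $(g,h^{e})$ is a principle pair for $\widetilde{\beta}$, and with $\mathbb{A}^{1}_{k}$ regarded as $\mathbb{G}_{a}^{b(t)}$ it is $\beta$-equivariant exactly when $(g,h^{e})$ is a $b(t)$-pair for $\beta$; as $\widetilde{\beta}^{\sharp}(g)=g+t\,h^{e}$ is equivalent to $\beta^{\sharp}(g)=g+b(t)\,h^{e}$ (once more by injectivity of $t\mapsto b(t)$ on $A[t]$), these two conditions agree, while dominance, generic smoothness, and connectedness of fibres depend only on the underlying morphism. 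Symmetrically, applying $(b(t),\operatorname{id})$ to a $\widetilde{\beta}$-equivariant trivialization $D(h)\cong\mathbb{G}_{a}\times(D(h)//\mathbb{G}_{a})$ yields a $\beta$-equivariant isomorphism $D(h)\cong\mathbb{G}_{a}^{b(t)}\times(D(h)//\mathbb{G}_{a})$, and conversely the $\beta$-action on the first factor of such a product is forced to be $w\mapsto w+b(t_{0})$, hence the $\widetilde{\beta}$-action there is ordinary translation. Therefore a) for $\beta$ is equivalent to the assertion that $D(h)$ is a trivial $\mathbb{G}_{a}$-bundle over $D(h)//\mathbb{G}_{a}$ for $\widetilde{\beta}$, and b) for $\beta$ is equivalent to the existence of a dominant, generically smooth, $\mathbb{G}_{a}$-equivariant $\Phi\colon D(h)\to\mathbb{G}_{a}$ with connected fibres for $\widetilde{\beta}$, and these are equivalent by Proposition~\ref{P:earlyTrivialBundle} applied to $\widetilde{\beta}$.

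I expect the main obstacle to be the bookkeeping of equivariance structures: checking that the isomorphism in a) really forces $\mathbf{ker}(b(t))$ to act trivially on the nose and that this propagates from the dense open $D(h)$ to all of $\operatorname{Spec}(A)$, and that the identification $\mathbb{G}_{a}//\mathbf{ker}(b(t))\cong\mathbb{G}_{a}^{b(t)}$ intertwines the $\mathbb{G}_{a}$-actions as claimed, given that $\mathbf{ker}(b(t))$ may be a non-reduced group scheme.
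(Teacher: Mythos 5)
Your proposal is correct and follows essentially the same route as the paper: both proofs replace $\mathbb{G}_{a}$ with $\mathbb{G}_{a}//\mathbf{ker}(b(t))\cong\mathbb{G}_{a}^{b(t)}$ and then reduce the statement to Proposition~\ref{P:earlyTrivialBundle}. You supply some details the paper leaves implicit, notably the polynomial-division argument (using that $b(t)$ has unit leading coefficient) to descend $\beta^{\sharp}(a)-a\in\langle b(t)\rangle A_{h}[t]$ to $\langle b(t)\rangle A[t]$, so that triviality of the $\mathbf{ker}(b(t))$-action propagates from $D(h)$ to all of $\operatorname{Spec}(A)$, and the explicit dictionary between $\beta$-equivariance into $\mathbb{G}_{a}^{b(t)}$ and $\widetilde{\beta}$-equivariance into $\mathbb{G}_{a}$.
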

\begin{proof}
    If statement a) holds, then we claim that $ \mathbf{ker}(b(t)) $ acts trivially on $ D(h) $.  Because $ \mathbb{G}_{a} $ acts trivially on $ D(h)//\mathbb{G}_{a} $ and $ \mathbf{ker}(b(t)) $ acts trivially upon $ \mathbb{G}_{a}//\mathbf{ker}(b(t)) $, it acts trivially upon $ \left(\mathbb{G}_{a}//\mathbf{ker}(b(t)) \times D(h)//\mathbb{G}_{a}\right) \cong D(h) $.  Therefore, we may replace $ \mathbb{G}_{a} $ with $ \mathbb{G}_{a}//\mathbf{ker}(b(t)) $.  If we do this, then we may assume that $ \mathbb{G}_{a} $ acts freely upon $ D(h) $ and that $ D(h) $ is isomorphic to $ \mathbb{G}_{a} \times D(h)//\mathbb{G}_{a} $, i.e., that $ D(h) $ is a trivial $ \mathbb{G}_{a} $-bundle.

    If statement b) holds, then $ \mathbf{ker}(b(t)) $ acts trivially on $ D(h) $.  If we replace $ \mathbb{G}_{a} $ with $ \mathbb{G}_{a}//\mathbf{ker}(b(t)) $ then it suffices to show that the following two statements are equivalent:
    \begin{itemize}
        \item[aa)] the variety $ D(h) $ is a trivial $ \mathbb{G}_{a} $-bundle,
        \item[bb)] there is a generically smooth, dominant, $ \mathbb{G}_{a} $-equivariant, morphism $ \Psi: D(h) \to \mathbb{G}_{a} $ whose fibres are connected.
    \end{itemize}

    This now follows from Proposition ~\ref{P:earlyTrivialBundle} (see page \pageref{P:earlyTrivialBundle}).
\end{proof}
\begin{dfn} \label{D:freudenbergIdeal}
    Let $ \operatorname{Spec}(A) $ be a $ \mathbb{G}_{a} $-variety over an algebraically closed field $ k $ of characteristic $ p>0 $.  Let $ S $ be the union of zero and the set of additive polynomials $ c(t) $ such that there is a non-trivial $ c(t) $-pair $ (g,h) $.  The \emph{fundamental ideal} is the left ideal of the Ore ring generated by the set $ S $.  We denote the fundamental ideal by $ \mathfrak{f}_{A} $ or by $ \mathfrak{f} $ when the action is clear.  Over an algebraically closed field \cite{Ore} shows that a left ideal of the Ore ring is right principle.  Hence there is a generator $ b(t) $ of $ \mathfrak{f} $.
\end{dfn}
By Lemma ~\ref{L:pairsLemma}, if $ b(t) $ generates $ \mathfrak{f}_{A} $, then there is a $ b(t) $-pair $ (g,h) $.
\begin{rmk}
    The reader may note that we included the statement ``$ \dots $ and $ \mathbf{ker}(b(t)) $ acts trivially upon $ \operatorname{Spec}(A) $,'' in Proposition ~\ref{P:trivialBundle} (see page \pageref{P:trivialBundle}).  They may ask why this is needed, and what one obtains without this statement.  If $ \mathbb{G}_{a} $ acts on $ \operatorname{Spec}(A) $, then the statement that there exists a dominant, generically smooth, $ \mathbb{G}_{a} $-equivariant morphism $ \Phi: D(h) \to (\mathbb{A}^{1}_{k})^{b(t)} $ is equivalent to the condition that $ \mathfrak{f}= \mathfrak{O} \langle b(t) \rangle $.  However, there is no guarantee a'priori that $ \mathbf{ker}(b(t)) $ stabilizes $ \operatorname{Spec}(A) $.  In ~\eqref{E:89} we see an example of a representation where $ \mathfrak{f}= \mathfrak{O} \langle b(t) \rangle $, but $ \mathbf{ker}(b(t)) $ does not stabilize $ \mathbf{V} $.
\end{rmk}
\begin{lem} \label{L:pairsLemma}
    Let $ \operatorname{Spec}(A) $ be a $ \mathbb{G}_{a} $-variety with action $ \beta $.  If $ (g_{1},h_{1}) $ is a $ c_{1}(t) $-pair, $ (g_{2},h_{2}) $ is a $ c_{2}(t) $-pair, and $ b(t) $ is equal to $ \mathfrak{O}(c_{1}(t),c_{2}(t)) $, then there is a $ b(t) $-pair $ (g,h) $.  If $ A $ is a UFD, then $ h $ is the product of elements of the set containing the irreducible factors of $ h_{1} $ and $ h_{2} $.
\end{lem}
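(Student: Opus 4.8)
The plan is to collapse the two pairs into a single rational function whose coboundary is exactly $b(t)$. Since the base field is perfect, $\mathfrak{O}$ is a (left) principal ideal domain, so (cf.\ the discussion around Definition~\ref{D:freudenbergIdeal}) the left ideal $\mathfrak{O}c_{1}(t)+\mathfrak{O}c_{2}(t)$ is principal, generated up to a non-zero scalar by $b(t)=\mathfrak{O}(c_{1}(t),c_{2}(t))$; hence there are additive polynomials $e_{1}(t),e_{2}(t)\in\mathfrak{O}$ with
\[
 e_{1}(c_{1}(t))+e_{2}(c_{2}(t))=b(t)
\]
(the product being composition). We may assume $h_{1},h_{2}\neq 0$, since a $c(t)$-pair with $h=0$ is trivial.

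On $D(h_{i})$ set $u_{i}:=g_{i}/h_{i}$; from $h_{i}\in A^{\mathbb{G}_{a}}$ and $\beta^{\sharp}(g_{i})=g_{i}+c_{i}(t)h_{i}$ one gets $u_{i}(t_{0}\ast x)=u_{i}(x)+c_{i}(t_{0})$. Put $v:=e_{1}(u_{1})+e_{2}(u_{2})$, where $e_{i}$ is applied to $u_{i}$ as an additive polynomial. The one step with real content is the computation
\[
 v(t_{0}\ast x)=e_{1}\bigl(u_{1}(x)+c_{1}(t_{0})\bigr)+e_{2}\bigl(u_{2}(x)+c_{2}(t_{0})\bigr)=v(x)+e_{1}(c_{1}(t_{0}))+e_{2}(c_{2}(t_{0}))=v(x)+b(t_{0}),
\]
which uses additivity of the $e_{i}$ in an essential way. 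Writing $e_{i}(u_{i})$ over the common denominator $h_{i}^{\,p^{m_{i}}}$, where $p^{m_{i}}=\deg e_{i}$, displays $v$ as $g/h$ with $g\in A$ and $h:=h_{1}^{\,p^{m_{1}}}h_{2}^{\,p^{m_{2}}}$, a non-zero element of $A^{\mathbb{G}_{a}}$ (a product of invariants). As $h$ is invariant the transformation law for $v$ rewrites, on the dense open $D(h)$ and hence in $A[t]$, as $g(t_{0}\ast x)=g(x)+b(t_{0})h(x)$, i.e.\ $\beta^{\sharp}(g)=g+b(t)h$, so $(g,h)$ is a $b(t)$-pair; and when $A$ is a UFD every irreducible factor of this $h$ divides $h_{1}h_{2}$, which is the assertion.

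One can sharpen this in the UFD case by replacing $(g,h)$ with the lowest-terms representation $v=g^{\ast}/h^{\ast}$, $\gcd(g^{\ast},h^{\ast})=1$: since translating $x$ by $\mathbb{G}_{a}$ changes $v$ only by the additive constant $b(t_{0})$, the polar divisor of $v$ is $\mathbb{G}_{a}$-stable, and because $\mathbb{G}_{a}$ is connected it fixes each of its prime components; thus each irreducible factor $\pi$ of $h^{\ast}$ is a $\mathbb{G}_{a}$-semi-invariant, hence invariant ($\mathbb{G}_{a}$ carries no non-trivial character), and $\pi\mid h$ forces $\pi\mid h_{1}h_{2}$. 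Note one cannot in general shrink $h$ to the squarefree radical of $h_{1}h_{2}$: already in $k[x,y]$ with $y\mapsto y+tx^{2}$ the element $(y,x^{2})$ is a $t$-pair, while there is no $t$-pair with denominator $x$.

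I expect the only real obstacle to be bookkeeping: lining up composition in the non-commutative ring $\mathfrak{O}$ with substitution into the $u_{i}$ — which is precisely why the B\'ezout coefficients must be taken in $\mathfrak{O}$ (additive) and not merely in $k[t]$ — together with, for the UFD sharpening, the ``semi-invariant of $\mathbb{G}_{a}$ is invariant'' argument. As an alternative to the hands-on construction of $v$, one could argue geometrically: the $c_{i}(t)$-pairs give $\mathbb{G}_{a}$-equivariant morphisms $D(h_{i})\to\mathbb{G}_{a}^{c_{i}(t)}$ by Proposition~\ref{P:specificTrivialBundle}, and composing with the structural morphisms of Definition~\ref{D:twistEquivt} and adding produces an equivariant dominant morphism $D(h_{1}h_{2})\to\mathbb{G}_{a}^{b(t)}$, whence a $b(t)$-pair again by Proposition~\ref{P:specificTrivialBundle}; but the computation above is shorter and yields the denominator bound directly.
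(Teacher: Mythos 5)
Your proof is correct and takes essentially the same route as the paper's: both pick B\'{e}zout coefficients in $\mathfrak{O}$ for the gcd $b(t)$ (your $e_{1},e_{2}$ are the paper's $b_{1},b_{2}$) and form the candidate $e_{1}(g_{1}/h_{1})+e_{2}(g_{2}/h_{2})$, then verify the $b(t)$-translation law by direct computation using additivity. The only difference is one of care in the bookkeeping for the denominator: you first exhibit the manifestly invariant $h=h_{1}^{p^{m_{1}}}h_{2}^{p^{m_{2}}}$ and only then discuss the lowest-terms reduction and the ``semi-invariant of $\mathbb{G}_{a}$ is invariant'' step (which is essentially Lemma~\ref{L:invtMult}), whereas the paper passes directly to $\gcd(g,h)=1$ and leaves the invariance of $h$ and the UFD clause as immediate from the displayed identity.
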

\begin{proof}
    Let $ b_{1}(t),b_{2}(t) $ and $ g/h $ be defined by the relations below:
    \begin{align*}
        b(t)&=b_{1}(c_{1}(t))+b_{2}(c_{2}(t)) \\
        g/h &= b_{1}(g_{1}/h_{1})+b_{2}(g_{2}/h_{2}) \\
        \gcd(g,h) &= 1.
    \end{align*}
    If $ t_{0} \in \mathbb{G}_{a} $ and $ x \in D(h) $, then the following computations now show that $ (g,h) $ is a $ b(t) $-pair
    \begin{align*}
        (g/h)(t_{0} \ast x) &= \left(b_{1}(g_{1}/h_{1})+b_{2}(g_{2}/h_{2})\right)(t_{0} \ast x) \\
        &=b_{1}(g_{1}/h_{1})(t_{0} \ast x)+b_{2}(g_{2}/h_{2})(t_{0} \ast x) \\
        &= b_{1}((g_{1}/h_{1})(t_{0} \ast x))+b_{2}(g_{2}/h_{2}(t_{0} \ast x) \\
        &= b_{1}((g_{1}/h_{1})(x)+c_{1}(t_{0}))+b_{2}((g_{2}/h_{2})(x)+c_{2}(t_{0})) \\
        &= b_{1}((g_{1}/h_{1})(x))+b_{1}(c_{1}(t_{0}))+b_{2}((g_{2}/h_{2})(x))+b_{2}(c_{2}(t_{0})) \\
        &= b_{1}((g_{1}/h_{1})(x))+b_{2}((g_{2}/h_{2})(x))+b_{1}(c_{1}(t_{0}))+b_{2}(c_{2}(t_{0})) \\
        &= b_{1}((g_{1}/h_{1})(x))+b_{2}((g_{2}/h_{2})(x))+b(t_{0}) \\
        &= b_{1}(g_{1}/h_{1})(x)+b_{2}(g_{2}/h_{2})(x)+b(t_{0}) \\
        &= (g/h)(x)+b(t_{0}).
    \end{align*}
    Therefore, $ (g,h) $ is a $ b(t) $-pair.  If $ A $ is a UFD, then $ h $ is the product of elements of the set containing the irreducible factors of $ h_{1} $ and $ h_{2} $ because
    \begin{equation*}
        g/h= b_{1}(g_{1}/h_{1})+b_{2}(g_{2}/h_{2}).
    \end{equation*}
\end{proof}
\begin{cor} \label{Cor:fundamental}
    Let $ \operatorname{Spec}(A) $ be a $ \mathbb{G}_{a} $-variety with action $ \beta $.  If $ \mathfrak{f} $ is equal to $ \mathfrak{O} \langle b(t) \rangle $, then there is a $ b(t) $-pair $ (g,h) $.
\end{cor}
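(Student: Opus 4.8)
The plan is to reduce the statement to a finite iteration of Lemma~\ref{L:pairsLemma}, the key point being that $\mathfrak{O}$ is well behaved even though it is non-commutative. First I would exploit that, $k$ being algebraically closed, $\mathfrak{O}$ is a principal left ideal domain (Definition~\ref{D:freudenbergIdeal}), hence left Noetherian. Although $\mathfrak{f}$ is by definition the left ideal generated by the possibly infinite set $S$, the generator $b(t)$ lies in that left ideal, so it can be written as a finite left-combination $b(t)=\sum_{i=1}^{m} q_{i}\circ c_{i}(t)$ with each $c_{i}\in S$ and $q_{i}\in\mathfrak{O}$. Then $\mathfrak{O}b(t)\subseteq \mathfrak{O}c_{1}+\dots+\mathfrak{O}c_{m}\subseteq\mathfrak{f}=\mathfrak{O}b(t)$, so in fact $\mathfrak{f}=\mathfrak{O}c_{1}+\dots+\mathfrak{O}c_{m}$ for finitely many $c_{i}\in S$. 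Discarding any $c_{i}=0$, each remaining $c_{i}$ admits a non-trivial $c_{i}(t)$-pair $(g_{i},h_{i})$, $h_{i}\in A^{\mathbb{G}_{a}}$, by the very definition of $S$.

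Next I would run Lemma~\ref{L:pairsLemma} inductively. Set $d_{2}=\mathfrak{O}(c_{1},c_{2})$ and $d_{j}=\mathfrak{O}(d_{j-1},c_{j})$ for $j=3,\dots,m$; at each stage the lemma produces a $d_{j}(t)$-pair out of the $d_{j-1}(t)$-pair already constructed and the $c_{j}(t)$-pair, and we end with a $d_{m}(t)$-pair $(g,h)$, $h\in A^{\mathbb{G}_{a}}$. To see that $d_{m}$ is a generator of $\mathfrak{f}$ I would use that $\mathfrak{O}(c_{1},c_{2})$ right-divides both $c_{1}$ and $c_{2}$ (the relations $c_{i}=d_{i}(\mathfrak{O}(c_{1},c_{2}))$ recorded after Example~\ref{EG:boft}) while simultaneously lying in $\mathfrak{O}c_{1}+\mathfrak{O}c_{2}$ (the B\'ezout identity $\mathfrak{O}(c_{1},c_{2})=b_{1}\circ c_{1}+b_{2}\circ c_{2}$ used in the proof of Lemma~\ref{L:pairsLemma}); hence $\mathfrak{O}\langle\mathfrak{O}(c_{1},c_{2})\rangle=\mathfrak{O}c_{1}+\mathfrak{O}c_{2}$, and iterating gives $\mathfrak{O}\langle d_{m}\rangle=\mathfrak{O}c_{1}+\dots+\mathfrak{O}c_{m}=\mathfrak{f}=\mathfrak{O}\langle b(t)\rangle$.

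Finally I would pass from the $d_{m}(t)$-pair to a $b(t)$-pair by a unit adjustment. From $\mathfrak{O}d_{m}=\mathfrak{O}b(t)$ one gets $b(t)=u(d_{m}(t))$ and $d_{m}(t)=v(b(t))$ for some $u,v\in\mathfrak{O}$; substituting and cancelling on the right in the domain $\mathfrak{O}$ forces $u\circ v=v\circ u=\operatorname{id}$, so $u$ is a unit, i.e.\ $u(t)=at$ with $a\in k^{\times}$ and $b(t)=a\,d_{m}(t)$. Then $\beta^{\sharp}(ag)=a\,\beta^{\sharp}(g)=ag+a\,d_{m}(t)h=ag+b(t)h$, so $(ag,h)$ is the desired $b(t)$-pair by Definition~\ref{D:aPair}. (If $\mathfrak{f}=0$, i.e.\ $b(t)=0$, the claim is immediate, since any invariant $g$ gives a $0$-pair $(g,0)$.) I do not expect a genuine obstacle here, as the substantive content was already isolated in Lemma~\ref{L:pairsLemma} and Definition~\ref{D:freudenbergIdeal}; the step most prone to slips is the non-commutative bookkeeping — keeping left versus right ideals straight and not confusing the Ore-gcd $\mathfrak{O}(c_{1},c_{2})$ with the gcd in $k[t]$.
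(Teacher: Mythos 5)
Your argument is correct and takes the same route as the paper, which dispatches the corollary with a one-line appeal to Lemma~\ref{L:pairsLemma} (in the remark immediately following Definition~\ref{D:freudenbergIdeal}). You simply make explicit the bookkeeping the paper leaves implicit: reducing to finitely many generators of $\mathfrak{f}$ via principality of left ideals in $\mathfrak{O}$, iterating Lemma~\ref{L:pairsLemma} along iterated Ore-gcds, verifying via the B\'ezout identity and the right-divisibility relations that the resulting $d_{m}$ generates $\mathfrak{f}$, and adjusting by a unit of $\mathfrak{O}$ to land exactly on $b(t)$ — none of which introduces a genuinely different idea.
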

We shall recall the statement of Luna's \'{E}tale slice theorem,
\begin{quote}
    Soit $ G $ un groupe r\'{e}ductif qui op\`{e}re dans une vari\'{e}t\'{e} affine $ X $.  Soit $ x $ un point de $ X $, dont l'orbite $ G(x) $ est ferm\'{e}e.

    Il existe alors une sous-vari\'{e}t\'{e} $ V $ de $ X $ qui les propri\'{e}t\'{e}s suivantes: elle est affine et contient $ x $; le groupe d'isotropie $ G_{x} $ laisse $ V $ stable; l'op\'{e}ration de $ G $ dans $ X $ induit un $ G $-morphisme \'{e}tale $ \psi: G \times_{G_{x}} V \to X $; l'image $ U $ de $ \psi $ est un ouvert, affine, $ \pi_{X} $ satur\'{e} de $ X $; le morphisme $ \psi/G: (G \times_{G_{x}} V)/G \cong V/G_{x} \to U/G $ est \'{e}tale; enfin; le morphismes $ \psi $ et $ G \times_{G_{x}} V \to (G \times_{G_{x}} V)/G \cong V/G_{x} $ induisent un $ G $-isomorphisme $ G \times_{G_{x}} V \cong U \times_{U/G} (V/G_{x}) $.

    Translation:

    Let $ G $ be a reductive group which acts on an affine variety $ X $.  Let $ x $ be a point of $ X $ whose orbit $ G(x) $ is closed.

    There exists a sub-variety $ V $ of $ X $ with the following properties: it is affine and contains $ x $; $ V $ is stable under the action of the isotropy group $ G_{x} $; the action of $ G $ on $ X $ induces an \'{e}tale $ G $-equivariant morphism $ \psi: G \times_{G_{x}} V \to X $; the image $ U $ of $ \psi $ is an open, affine, dense sub-variety of $ X $; the morphism $ \psi/G: (G \times_{G_{x}} V)/G \cong V/G_{x} \to U/G $ is \'{e}tale; finally; the morphisms $ \psi $ and $ G \times_{G_{x}} V \to (G \times_{G_{x}} V)/G \cong V/G_{x} $ induce a $ G $-equivariant isomorphism $ G \times_{G_{x}} V \cong U \times_{U/G} (V/G_{x}) $.
\end{quote}
Recall that if $ \operatorname{Spec}(A) $ is a $ \mathbb{G}_{a} $-variety, and $ H $ is a sub-group scheme of $ \mathbb{G}_{a} $, then $ H $ acts on $ \mathbb{G}_{a} \times \operatorname{Spec}(A) $ via the action which sends $ (h,(t_{0},x)) $ to $ (t_{0}+h, h^{-1} \ast x) $.  We write $ \mathbb{G}_{a} \times_{H} \operatorname{Spec}(A) $ to denote the quotient of $ \mathbb{G}_{a} \times \operatorname{Spec}(A) $ by this action.
\begin{prop} \label{P:principlePairBundle}
    Let $ \operatorname{Spec}(A) $ be a $ \mathbb{G}_{a} $-variety and $ h \in A^{\mathbb{G}_{a}} $.  The following conditions are equivalent
    \begin{itemize}
        \item[a)] there is a dominant, generically smooth, $ \mathbb{G}_{a} $-equivariant, morphism \linebreak $ \Phi: D(h) \to \left(\mathbb{A}^{1}_{k}\right)^{b(t)} $ such that the fibres of $ \Phi $ are connected, and $ \mathbf{ker}(b(t)) $ acts trivially upon $ D(h) $,
        \item[b)] there is a quasi-principle $ b(t) $-pair $ (g,h^{e}) $ for some additive polynomial $ b(t) $, $ g \in A $ and $ e \in \mathbb{N}_{0} $,
        \item[c)] there exists a sub-variety $ Y $ of $ D(h) $ such that
            \begin{itemize}
                \item[i)] it is affine,
                \item[ii)] it is fixed by $ \mathbf{ker}(b(t)) $,
                \item[iii)] the action of $ \mathbb{G}_{a} $ on $ D(h) $ induces an \'{e}tale $ \mathbb{G}_{a} $-equivariant morphism $ \psi: \mathbb{G}_{a} \times_{\mathbf{ker}(b(t))} Y \cong D(h) $,
                \item[iv)] the morphism $ \psi/\mathbb{G}_{a}: (\mathbb{G}_{a} \times_{\mathbf{ker}(b(t))} Y)/\mathbb{G}_{a} \cong Y \to D(h)/\mathbb{G}_{a} $ is \'{e}tale,
                \item[v)] the morphisms $ \psi $ and $ \mathbb{G}_{a} \times_{\mathbf{ker}(b(t))} Y \to (D(h) \times_{D(h)//\mathbb{G}_{a})} Y)/\mathbb{G}_{a} \cong Y $ induce a $ \mathbb{G}_{a} $-equivariant isomorphism
                    \begin{equation*}
                        \left(\mathbb{G}_{a} \times_{\mathbf{ker}(b(t))} Y\right) \cong \left(D(h) \times_{D(h)/\mathbb{G}_{a}} Y\right).
                    \end{equation*}
            \end{itemize}
    \end{itemize}
\end{prop}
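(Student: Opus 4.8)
The plan is to reduce all three statements to the principle case $b(t)=t$ and then appeal to Proposition \ref{P:earlyTrivialBundle}. First I would check that each of a), b), c) forces $\mathbf{ker}(b(t))$ to act trivially on $D(h)$: in a) this is assumed; in b) it is built into the definition of a quasi-principle $b(t)$-pair; in c) it follows from ii) and iii), since $\mathbf{ker}(b(t))$ fixes $Y$, so the equivalence relation defining $\mathbb{G}_{a}\times_{\mathbf{ker}(b(t))}Y$ identifies $(t_{0},y)$ with $(t_{0}+k,y)$ for every $k\in\mathbf{ker}(b(t))$, whence $k\ast\psi([t_{0},y])=\psi([t_{0}+k,y])=\psi([t_{0},y])$, and as $\psi$ is surjective onto $D(h)$ we conclude $\mathbf{ker}(b(t))$ fixes every point of $D(h)$. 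As $h\neq 0$ and $A$ is a domain, $D(h)$ is dense, so $\mathbf{ker}(b(t))$ acts trivially on $\operatorname{Spec}(A)$, and by Lemma \ref{L:principlePairEquiv} the action $\beta$ factors through the endomorphism $b(t)$ of $\mathbb{G}_{a}$. Replacing $\mathbb{G}_{a}$ by $\mathbb{G}_{a}//\mathbf{ker}(b(t))\cong\mathbb{G}_{a}$ via $b(t)$ loses nothing: $(\mathbb{A}^{1}_{k})^{b(t)}$ becomes $\mathbb{A}^{1}_{k}$ with its standard action, a $b(t)$-pair $(g,h^{e})$ becomes a principle pair, and (because $\mathbf{ker}(b(t))$ fixes $Y$) the induced space $\mathbb{G}_{a}\times_{\mathbf{ker}(b(t))}Y$ becomes $\mathbb{G}_{a}\times Y$. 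So I may assume $b(t)=t$ from now on.

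With $b(t)=t$, condition a) is exactly item b) of Proposition \ref{P:earlyTrivialBundle} and condition b) is exactly item c) of that proposition, so a)$\Leftrightarrow$b), and both are equivalent to $D(h)$ being a trivial $\mathbb{G}_{a}$-bundle over $D(h)//\mathbb{G}_{a}$. For b)$\Rightarrow$c) I would take the cross-section $Y=\mathcal{V}(\langle g\rangle)\cap D(h)$ furnished by item d) of Proposition \ref{P:earlyTrivialBundle}, so that $\pi\circ\iota\colon Y\to D(h)//\mathbb{G}_{a}$ is an isomorphism and every point of $Y$ has trivial stabilizer, and verify i)--v) directly: $Y$ is closed in the affine $D(h)$, hence affine; $Y$ is fixed by $\mathbf{ker}(b(t))$ since that group acts trivially; the orbit map $\psi\colon\mathbb{G}_{a}\times Y\to D(h)$, $(t_{0},y)\mapsto t_{0}\ast y$, is $\mathbb{G}_{a}$-equivariant and, by the argument in the proof of Proposition \ref{P:earlyTrivialBundle}, injective (trivial stabilizers on $Y$), surjective ($Y$ meets every orbit) and separable, hence an isomorphism and in particular étale; $\psi/\mathbb{G}_{a}$ is $\pi\circ\iota$, an isomorphism, hence étale; and v) holds because, once $D(h)\cong\mathbb{G}_{a}\times(D(h)//\mathbb{G}_{a})$ and $Y\cong D(h)//\mathbb{G}_{a}$ are used, both $\mathbb{G}_{a}\times Y$ and $D(h)\times_{D(h)//\mathbb{G}_{a}}Y$ are $\mathbb{G}_{a}$-equivariantly isomorphic to $\mathbb{G}_{a}\times Y$.

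For c)$\Rightarrow$a) I would run this backwards: from c) with $b(t)=t$, properties iii) and v) together with triviality of the stabilizers on $Y$ show that $\psi$ is a $\mathbb{G}_{a}$-equivariant isomorphism $\mathbb{G}_{a}\times Y\to D(h)$; taking $\mathbb{G}_{a}$-quotients identifies $Y$ with $D(h)//\mathbb{G}_{a}$, so $D(h)\cong\mathbb{G}_{a}\times(D(h)//\mathbb{G}_{a})$ is a trivial $\mathbb{G}_{a}$-bundle and a) follows from Proposition \ref{P:earlyTrivialBundle}, the projection $\mathbb{G}_{a}\times(D(h)//\mathbb{G}_{a})\to\mathbb{G}_{a}$ being dominant, generically smooth, equivariant and having connected fibres. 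This closes the cycle a)$\Rightarrow$b)$\Rightarrow$c)$\Rightarrow$a). The step I expect to be the main obstacle is making the reduction to $b(t)=t$ genuinely reversible for all three statements simultaneously — in particular checking carefully that $\mathbb{G}_{a}\times_{\mathbf{ker}(b(t))}Y$ descends to $\mathbb{G}_{a}\times Y$ and that étale-ness and isomorphy of $\psi$ and of $\psi/\mathbb{G}_{a}$ are preserved — together with verifying property v), since (as noted in the proof of Proposition \ref{P:earlyTrivialBundle}) the relevant quotients may only exist as algebraic spaces, so the fibre-product isomorphism has to be established there rather than naively in the category of schemes.
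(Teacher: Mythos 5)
Your proposal is correct and lands in essentially the same place as the paper's proof, with one organizational difference worth noting. The paper establishes a)\ $\Leftrightarrow$\ b) directly in the twisted setting: it applies Proposition~\ref{P:specificTrivialBundle} to extract a $b(t)$-pair from the $\mathbb{G}_{a}$-equivariant morphism to $(\mathbb{A}^{1}_{k})^{b(t)}$ and, for the converse, argues (using the factorization of $\beta$ through $b(t)$ guaranteed by Lemma~\ref{L:principlePairEquiv}) that $\Phi$ cannot factor through any non-identity endomorphism, so Lemma~\ref{L:connectedNonIdentity} gives dominance, generic smoothness and connected fibres. Only for b)\ $\Leftrightarrow$\ c) does the paper invoke Proposition~\ref{P:trivialBundle}, which internally performs the replacement of $\mathbb{G}_{a}$ by $\mathbb{G}_{a}//\mathbf{ker}(b(t))$. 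Your version performs that replacement once, uniformly, at the outset — checking (correctly, via ii) and iii) and the density of $D(h)$) that c) also forces $\mathbf{ker}(b(t))$ to act trivially, so Lemma~\ref{L:principlePairEquiv} applies — and then reads off all three equivalences from Proposition~\ref{P:earlyTrivialBundle}. That buys you a cleaner cycle and lets you avoid re-running the ``does $\Phi$ factor through a non-identity endomorphism'' argument; the cost is that you must verify the reduction is harmless for each of a), b), c) separately, which you do. Your identification of the cross-section $Y=\mathcal{V}(\langle g\rangle)\cap D(h)$ agrees with the paper's $\Phi^{-1}(0)$, since $\Phi=g/h^{e}$, and your worry about the algebraic-space issue in verifying v) is the same caveat the paper acknowledges in the proof of Proposition~\ref{P:earlyTrivialBundle}; it does not affect correctness here because both sides of the isomorphism in v) are already schemes once $D(h)\cong\mathbb{G}_{a}\times(D(h)//\mathbb{G}_{a})$ is in hand.
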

\begin{proof}
    If a) holds, then there is clearly a $ b(t) $-pair $ (g,h^{e}) $ for some $ g \in A $ and $ e \in \mathbb{N}_{0} $ by Proposition ~\ref{P:specificTrivialBundle} (see page \pageref{P:specificTrivialBundle}).  Since $ \mathbf{ker}(b(t)) $ acts trivially upon $ D(h) $, the pair $ (g,h^{e}) $ is a quasi-principle pair.  Therefore, a) implies b).

    If b) holds, then by Proposition ~\ref{P:specificTrivialBundle} (see page \pageref{P:specificTrivialBundle}), there is a $ \mathbb{G}_{a} $-equivariant morphism $ \Phi: D(h) \to \mathbb{G}_{a}^{b(t)} $.  If $ \Phi $ factors through a non-identity endomorphism $ c(t) $ of $ \mathbb{G}_{a} $, then let $ c(t) $ be maximal with respect to this property.  If this is the case, then there is an additive polynomial $ d(t) $ and a dominant, $ \mathbb{G}_{a} $-equivariant morphism $ \Phi_{1}: D(h) \to \mathbb{G}_{a}^{d(t)} $ such that $ c(t) \circ \Phi_{1} = \Phi $.  There is a $ g_{1} \in A $ and $ e_{1} \in \mathbb{N}_{0} $ such that $ \Phi_{1} $ maps $ x \in D(h) $ to $ (g_{1}/h^{e_{1}})(x) $.  By Proposition ~\ref{P:specificTrivialBundle} (see page \pageref{P:specificTrivialBundle}) $ (g_{1},h^{e_{1}}) $ is a $ d(t) $-pair.  Because $ \Phi= c \circ \Phi_{1} $,
    \begin{equation*}
        c(d(t)) =b(t).
    \end{equation*}
    Since $ \beta^{\sharp} $ factors through $ \operatorname{id}_{A} \otimes b(t)^{\sharp} $, by Lemma ~\ref{L:principlePairEquiv}
    \begin{align*}
        \beta^{\sharp}(g_{1}) &= g_{1}+d(t)h^{e_{1}} \\
        &\in A[b(t)].
    \end{align*}
    So $ c(t) $ must equal $ t $, contrary to our assumption.  Therefore $ \Phi $ is dominant, generically smooth and the fibres of $ \Phi $ are connected.  So, b) implies a).

    Assume that b) holds.  We proved in Proposition ~\ref{P:specificTrivialBundle} (see page \pageref{P:specificTrivialBundle}) that a $ b(t) $-pair $ (g,h^{e}) $ exists and in Lemma ~\ref{L:principlePairEquiv} (see page \pageref{L:principlePairEquiv}) that $ \mathbf{ker}(b(t)) $ is the largest, additive, sub group-scheme which acts trivially on $ \operatorname{Spec}(A) $.  Proposition ~\ref{P:trivialBundle} (see page \pageref{P:trivialBundle}) now shows that if $ Y $ is $ \Phi^{-1}(0) $ with its reduced induced scheme structure, then $ D(h) \cong \left(Y \times_{\mathbf{ker}(b(t))} \mathbb{G}_{a} \right) $.  Therefore, ii) and iii) hold where $ \psi $ is the action obtained of $ \mathbb{G}_{a} \cong \mathbb{G}_{a}/\mathbf{ker}(b(t)) $ on $ Y \times_{\mathbf{ker}(b(t))} \mathbb{G}_{a} $.  The variety $ Y $ is clearly a categorical quotient of $ D(h) $ by $ \mathbb{G}_{a} $.  Because $ \operatorname{Spec}(A_{h}^{\mathbb{G}_{a}}) $ is a categorical quotient,
    \begin{align*}
        Y &\cong D(h)//\mathbb{G}_{a} \\
        & \cong \operatorname{Spec}(A_{h}^{\mathbb{G}_{a}}).
    \end{align*}
    Therefore, i) holds.  The morphism $ \psi//\mathbb{G}_{a} $ is an isomorphism; therefore, iv) holds.  Condition v) holds because:
    \begin{align*}
        D(h) & \cong \left(Y \times_{\mathbf{ker}(b(t))} \mathbb{G}_{a}\right) \\
        & \cong \left(D(h) \times_{D(h)//\mathbb{G}_{a}} Y\right) \\
        &\cong \left(D(h) \times_{Y} Y\right),
    \end{align*}
    Therefore, b) implies c).

    If c) holds, then $ D(h) \cong \left(Y \times_{\mathbf{ker}(b(t))} \mathbb{G}_{a}\right) $ for a variety $ Y $ with a trivial $ \mathbf{ker}(b(t)) $-action.  The variety $ Y $ is a categorical quotient, so by the uniqueness of categorical quotients $ Y \cong D(h)//\mathbb{G}_{a} $.  By Proposition ~\ref{P:trivialBundle} (see page \pageref{P:trivialBundle}) there is a dominant, generically smooth, $ \mathbb{G}_{a} $-equivariant, morphism $ \Phi: D(h) \to (\mathbb{G}_{a})^{b(t)} $.  So there exists a $ b(t) $-pair by Proposition ~\ref{P:specificTrivialBundle} (see page \pageref{P:specificTrivialBundle}).  Such a pair is quasi-principle by definition.
\end{proof}
\begin{dfn} \label{D:plinth1}
    If $ \operatorname{Spec}(A) $ is a $ \mathbb{G}_{a} $-variety, then the \emph{pedestal ideal} is the ideal of $ A $ generated by $ \{ 0 $ and all $ h $ such that there exists some non-zero, additive polynomial $ b(t) $ and a $ g \in A $ such that $ (g,h) $ is a quasi-principle $ b(t) $-pair $ \} $ (see Definition ~\ref{D:aPair} on page \pageref{D:aPair}).  Denote this ideal by $ \mathfrak{P}(A) $ or just $ \mathfrak{P} $ when the underlying ring is clear.  The \emph{pedestal scheme} is $ \mathcal{V}(\mathfrak{P}(A)) \subseteq \operatorname{Spec}(A) $.  A point $ x \in \operatorname{Spec}(A) $ is \emph{affine stable} if $ x \in \operatorname{Spec}(A) \setminus \mathcal{V}(\mathfrak{P}(A)) $.  If $ Z $ is an arbitrary (not necessarily affine) variety, then $ x \in Z $ is affine stable if there is a $ \mathbb{G}_{a} $-stable, affine, neighborhood $ \operatorname{Spec}(A) $ such that $ x $ is an affine stable point of $ \operatorname{Spec}(A) $.  We denote the sub-variety of affine stable points of $ \operatorname{Spec}(A) $ by $ \operatorname{Spec}(A)^{as} $.  The variety $ \operatorname{Spec}(A) $ is quasi-principle if there is a quasi-principle pair $ (g,h) $.  An arbitrary variety $ Z $ is quasi-principle if $ Z^{as} $ is non-empty.

    If $ \operatorname{Spec}(A) $ is a $ \mathbb{G}_{a} $-variety, then the \emph{large pedestal ideal} is the ideal of $ A $ generated by $ \{ h $ such that there exists a non-zero, additive polynomial $ c(t) $ and a $ g \in A $ such that $ (g,h) $ is a $ c(t) $-pair $ \} $.  We denote the large pedestal ideal of $ A $ by $ \mathfrak{P}_{g}(A) $.
\end{dfn}
If $ \operatorname{Spec}(A) $ is a quasi-principle variety with action $ \beta $, and $ (g,h) $ is a quasi-principle $ b(t) $-pair, then $ \mathbf{ker}(b(t)) $ acts trivially on $ \operatorname{Spec}(A) $.  Since
\begin{align*}
    A^{\mathbb{G}_{a}} &= (A^{\mathbf{ker}(b(t))})^{(\mathbb{G}_{a}//\mathbf{ker}(b(t)))} \\
    &= A^{\mathbb{G}_{a}//\mathbf{ker}(b(t))},
\end{align*}
it often makes sense to replace $ \mathbb{G}_{a} $ by $ \mathbb{G}_{a}//\mathbf{ker}(b(t)) $ and assume that $ (g,h) $ is a principle pair.  One may notice that by Proposition ~\ref{P:earlyTrivialBundle} (see page \pageref{P:earlyTrivialBundle}) the definition that $ \operatorname{Spec}(A) $ is a generically principle variety is equivalent to the existence of a principle pair.
    
The pedestal ideal is related to the concept of the plinth ideal.  The plinth ideal has been studied in characteristic zero in the situation where a $ \mathbb{G}_{a} $-action is determined by a locally nilpotent derivation.  If the ring $ A^{\mathbb{G}_{a}} $ is a finitely generated ring over a field of characteristic zero, then the plinth ideal is equal to $ \mathfrak{P}(A) \cap A^{\mathbb{G}_{a}} $.
\begin{rmk}
    The pedestal ideal is non-empty, since it contains zero.  The large pedestal ideal is non-empty since for $ h \in A^{\mathbb{G}_{a}} $, the pair $ (h,0) $ is a $ c(t) $-pair for any non-zero, additive polynomial $ c(t) $.
\end{rmk}
The reader should avoid the pitfall of believing that all representations have a non-zero large pedestal ideal.  In example ~\ref{EG:1} (see page \pageref{EG:1}) we provide an example of three dimensional, linear representation $ \beta: \mathbb{G}_{a} \to \operatorname{GL}(\mathbf{V}) $ such that $ \mathfrak{P}_{g}(S_{k}(\mathbf{V}^{\ast})) $ is equal to zero.  Moreover, we will give a strict criterion classifying all representations such that $ \mathfrak{P}_{g}(S_{k}(\mathbf{V}^{\ast})) $ is equal to zero.
\section{When the Large Pedestal Ideal is Equal to Zero.}
\begin{lem} \label{L:deltaLemma}
    If $ \beta $ is an action of $ \mathbb{G}_{a} $ on $ \operatorname{Spec}(A) $ and we denote $ \beta^{\sharp}(f) -f $ by $ \delta(f) $, then $ \delta(fg)=\delta(f)\delta(g)+f\delta(g)+\delta(f)g $.  Also, $ \delta $ is an $ A^{\mathbb{G}_{a}} $-module homomorphism from $ A $ to $ A[t] $; i.e., if $ h \in A^{\mathbb{G}_{a}} $ and $ g \in A $, then $ \delta(gh) = h\delta(g) $.
\end{lem}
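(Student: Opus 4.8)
The plan is to reduce everything to the single structural fact that the co-action $\beta^{\sharp}\colon A \to A[t]$ is a homomorphism of $k$-algebras, hence both additive and multiplicative, together with the characterization of invariants: $h \in A^{\mathbb{G}_{a}}$ if and only if $\beta^{\sharp}(h) = h$, i.e. $\delta(h) = 0$. Since $\beta^{\sharp}(f) \in A[t]$ and $f$ is identified with its image in $A[t]$, the difference $\delta(f) = \beta^{\sharp}(f) - f$ indeed lands in $A[t]$, so $\delta$ is a well-defined map $A \to A[t]$.

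First I would prove the product rule. Writing $\beta^{\sharp}(f) = f + \delta(f)$ and $\beta^{\sharp}(g) = g + \delta(g)$ in $A[t]$ and using multiplicativity of $\beta^{\sharp}$,
\[
    \beta^{\sharp}(fg) = \beta^{\sharp}(f)\,\beta^{\sharp}(g) = fg + f\delta(g) + \delta(f)g + \delta(f)\delta(g),
\]
and subtracting $fg$ from both sides gives $\delta(fg) = \delta(f)\delta(g) + f\delta(g) + \delta(f)g$ at once.

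Next I would record that $\delta$ is additive, which again is immediate from $\beta^{\sharp}$ being a ring homomorphism: $\delta(f + g) = \beta^{\sharp}(f+g) - (f+g) = \bigl(\beta^{\sharp}(f) - f\bigr) + \bigl(\beta^{\sharp}(g) - g\bigr) = \delta(f) + \delta(g)$. Then, for $h \in A^{\mathbb{G}_{a}}$ we have $\delta(h) = 0$, so the product rule just established specializes to $\delta(hg) = \delta(h)\delta(g) + h\delta(g) + \delta(h)g = h\delta(g)$. Combined with additivity, this is exactly the assertion that $\delta\colon A \to A[t]$ is a homomorphism of $A^{\mathbb{G}_{a}}$-modules, where $A[t]$ carries the $A^{\mathbb{G}_{a}}$-module structure coming from $A^{\mathbb{G}_{a}} \hookrightarrow A \hookrightarrow A[t]$.

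There is no genuine obstacle here; the statement is a formal consequence of $\beta^{\sharp}$ being a $k$-algebra map. The only point demanding any care is bookkeeping inside $A[t]$ — keeping track that $f$, $g$, $h$ are viewed via their images in $A[t]$ and that the products $f\delta(g)$, $\delta(f)g$, $h\delta(g)$ are formed in that ring — after which each claim is a one-line computation.
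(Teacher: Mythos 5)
Your proof is correct and follows essentially the same route as the paper: both exploit that $\beta^{\sharp}$ is a $k$-algebra homomorphism to expand $\beta^{\sharp}(fg)$ and subtract $fg$, with the invariants characterized by $\delta(h)=0$. You substitute $\beta^{\sharp}(f) = f+\delta(f)$ up front and multiply out, while the paper reaches the identical identity by an add-and-subtract rearrangement; you also spell out the additivity and module-homomorphism steps that the paper waves off as ``clear,'' but there is no substantive difference.
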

\begin{proof}
    Observe that
    \begin{equation} \label{E:34}
        \delta(fg) =\beta^{\sharp} (fg)-fg.
    \end{equation}
    If we add and subtract $ f\beta^{\sharp}(g) $ from ~\eqref{E:34}, then
    \begin{equation} \label{E:35}
        \delta(fg)= \delta(f)\beta^{\sharp}(g)+f \delta(g).
    \end{equation}
    By adding and subtracting $ \delta(f)g $ from the right hand side of ~\eqref{E:35}, we obtain the desired result.  The last part of the Lemma is clear.
\end{proof}
\begin{cor} \label{Cor:expansion}
    If $ \beta $ is an action of $ \mathbb{G}_{a} $ on $ \mathbb{A}^{n}_{k} \cong \operatorname{Spec}(k[X]) $, then $ \delta(x_{j}^{i}) $ is equal to $ \sum_{\ell=1}^{i} \binom{i}{\ell} \delta(x_{j})^{\ell} x_{j}^{i-\ell} $.  If $ x_{j} $ is not invariant, then $ \delta(x_{j}^{i})= \delta(x_{j})^{i} $ if and only if $ i $ is a power of $ p $.
\end{cor}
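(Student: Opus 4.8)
The plan is to read the explicit formula straight off the fact that $\beta^{\sharp}$ is a ring homomorphism, and then to extract the equality criterion by a degree comparison in the variable $t$.

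First I would note that, since $\beta^{\sharp}\colon k[X]\to k[X][t]$ is a $k$-algebra homomorphism and $\beta^{\sharp}(x_{j})=x_{j}+\delta(x_{j})$ by the very definition of $\delta$, the binomial theorem in the commutative ring $k[X][t]$ gives $\beta^{\sharp}(x_{j}^{i})=(x_{j}+\delta(x_{j}))^{i}=\sum_{\ell=0}^{i}\binom{i}{\ell}\delta(x_{j})^{\ell}x_{j}^{i-\ell}$. Subtracting the $\ell=0$ term $x_{j}^{i}$ yields the first assertion; alternatively one can induct on $i$ using the product rule from Lemma~\ref{L:deltaLemma}. In particular the $\ell=i$ summand equals $\delta(x_{j})^{i}$, so $\delta(x_{j}^{i})=\delta(x_{j})^{i}$ holds if and only if the ``middle'' sum $\sum_{\ell=1}^{i-1}\binom{i}{\ell}\delta(x_{j})^{\ell}x_{j}^{i-\ell}$ is zero in $k[X][t]$.

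To analyze that sum I would invoke the hypothesis that $x_{j}$ is not invariant. Then $\delta(x_{j})\neq 0$, and since $\delta(x_{j})=\beta^{\sharp}(x_{j})-x_{j}$ vanishes at $t=0$, it is a polynomial in $t$ of some degree $d\geq 1$ with nonzero leading coefficient $c\in k[X]$. Because $k[X]$ is an integral domain, $\delta(x_{j})^{\ell}$ has $t$-degree $\ell d$ with leading coefficient $c^{\ell}\neq 0$, so each summand $\binom{i}{\ell}\delta(x_{j})^{\ell}x_{j}^{i-\ell}$ is either identically zero (precisely when $\binom{i}{\ell}$ vanishes in $k$) or has $t$-degree exactly $\ell d$ with nonzero leading coefficient $\binom{i}{\ell}c^{\ell}x_{j}^{i-\ell}$. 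The nonzero summands therefore lie in pairwise distinct $t$-degrees, so their sum vanishes exactly when each of them does, i.e.\ exactly when $\binom{i}{\ell}\equiv 0$ in $k$ for every $1\leq \ell\leq i-1$. Finally I would cite the elementary fact that $p$ divides $\binom{i}{\ell}$ for all $0<\ell<i$ if and only if $i$ is a power of $p$: if $i=p^{s}$ this is the Frobenius identity $(1+x)^{p^{s}}=1+x^{p^{s}}$ over $\mathbb{F}_{p}$, while if $i=p^{s}m$ with $p\nmid m$ and $m\geq 2$, then expanding $(1+x)^{i}=(1+x^{p^{s}})^{m}$ shows $\binom{i}{p^{s}}\equiv m\not\equiv 0\pmod p$ with $1\leq p^{s}\leq i-1$.

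I expect the only point requiring real care to be the degree bookkeeping in the middle step — specifically the fact that $\delta(x_{j})$ genuinely involves $t$, so that $d\geq 1$ and the degrees $\ell d$ are distinct for distinct $\ell$. This is exactly the step that collapses when $x_{j}$ is invariant, which is why that hypothesis is imposed; everything else is a routine binomial expansion.
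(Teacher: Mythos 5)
Your argument is correct and is the natural route to the (unproved) corollary: the first formula is simply the binomial expansion of $\beta^{\sharp}(x_{j}^{i})=(x_{j}+\delta(x_{j}))^{i}$, and the $t$-degree argument plus the standard fact that $p\mid\binom{i}{\ell}$ for all $0<\ell<i$ iff $i$ is a $p$-power gives the second. The only point worth flagging is that you correctly justified $d\geq 1$ via the identity axiom (constant term in $t$ is zero), which is the step that fails when $x_{j}$ is invariant and hence exactly where the hypothesis is used.
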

\begin{lem} \label{L:invtMult}
    Let $ \operatorname{Spec}(A) $ be a $ \mathbb{G}_{a} $-variety with action $ \beta $ and no non-trivial units.  If $ f,g \in A \setminus \{0\} $ and $ fg \in A^{\mathbb{G}_{a}} $, then $ f $ and $ g $ are invariant.
\end{lem}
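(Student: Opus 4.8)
The plan is to compare $t$-degrees inside the polynomial ring $A[t]$. Since $\operatorname{Spec}(A)$ is a variety, $A$ is an integral domain, hence so is $A[t]$, and the $t$-degree is additive on products of nonzero elements of $A[t]$. The preliminary observation I would record is the following dictionary: writing $\beta^{\sharp}(a) = \sum_{j \ge 0} \phi_{j}(a)\, t^{j}$ as in the introduction, with $\phi_{0} = \operatorname{id}_{A}$ (so the constant term of $\beta^{\sharp}(a)$ is $a$), the element $\beta^{\sharp}(a)$ has $t$-degree $0$ in $A[t]$ if and only if $\beta^{\sharp}(a) = a$, i.e. if and only if $a \in A^{\mathbb{G}_{a}}$; equivalently, in the notation of Lemma~\ref{L:deltaLemma}, $a$ is invariant precisely when $\delta(a) = 0$.

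Next I would use that $\beta^{\sharp}$ is a ring homomorphism together with the hypothesis $fg \in A^{\mathbb{G}_{a}}$, which gives the identity
\[
\beta^{\sharp}(f)\,\beta^{\sharp}(g) = \beta^{\sharp}(fg) = fg
\]
in $A[t]$, and the right-hand side has $t$-degree $0$ because it lies in $A$. As $f, g \neq 0$ and $A$ is a domain, $fg \neq 0$ and both $\beta^{\sharp}(f)$ and $\beta^{\sharp}(g)$ are nonzero in $A[t]$ (their constant terms being $f$ and $g$), so additivity of the $t$-degree gives $\deg_{t}\beta^{\sharp}(f) + \deg_{t}\beta^{\sharp}(g) = 0$. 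Since both summands are non-negative, each vanishes, and by the dictionary of the first step $\beta^{\sharp}(f) = f$ and $\beta^{\sharp}(g) = g$; that is, $f, g \in A^{\mathbb{G}_{a}}$.

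I do not expect a real obstacle here: the whole content is this degree bookkeeping, and the only point requiring care is that the argument genuinely needs $fg \neq 0$, which is automatic from $f, g \neq 0$ once we use that $A$ is a domain. The hypothesis that $A$ has no non-trivial units is not consumed by this argument; I would nonetheless keep it in the statement since it is the standing hypothesis under which the lemma gets invoked later (for instance, when combined with Lemma~\ref{L:pairsLemma} to conclude that the invariant $h$ produced there factors into invariant irreducibles). If one prefers to avoid passing through $A[t]$, the same proof can be organised around Lemma~\ref{L:deltaLemma}: the identity $\delta(fg) = 0$ rearranges to $\beta^{\sharp}(f)\,\delta(g) = -\delta(f)\,g$, and comparing leading $t$-coefficients on the two sides, using once more that $A$ is a domain, forces $\delta(f) = \delta(g) = 0$.
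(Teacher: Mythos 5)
Your proof is correct but takes a genuinely different route from the paper's. The paper works with the translation automorphisms $\rho_{x}^{\sharp}$ for closed points $x \in \mathbb{G}_{a}$: from $\rho_{x}^{\sharp}(f)\,\rho_{x}^{\sharp}(g) = fg$ it deduces, using the hypothesis that $A$ has no non-trivial units, that $\rho_{x}^{\sharp}(f)/f =: c_{x}$ lies in $k^{\ast}$, and then checks that $x \mapsto c_{x}$ is a character of $\mathbb{G}_{a}$, hence trivial. You instead work directly with the co-action $\beta^{\sharp}\colon A \to A[t]$ and compare $t$-degrees, relying only on $A$ (hence $A[t]$) being a domain and on the co-unit axiom, which is what guarantees the constant $t$-coefficient of $\beta^{\sharp}(a)$ equals $a$. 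Your version is more elementary, and you are right that it does not actually consume the no-non-trivial-units hypothesis: the conclusion holds for any $\mathbb{G}_{a}$-variety $\operatorname{Spec}(A)$. The paper's route does lean on that hypothesis, and indeed it passes from the identity $(\rho_{x}^{\sharp}(f)/f)(\rho_{x}^{\sharp}(g)/g) = 1$ in $\operatorname{Frac}(A)$ to $\rho_{x}^{\sharp}(f)/f \in k^{\ast}$ without first explaining why the quotient sits in $A$, so your degree argument is arguably the cleaner one. The $\delta$-variant you sketch at the end is the same calculation in slightly different clothing and is also fine.
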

\begin{proof}
    Let $ \rho_{x} $ denote right translation by $ x \in \mathbb{G}_{a} $.  If $ f,g \in A $ and $ fg \in A^{\mathbb{G}_{a}} $, then
    \begin{align*}
        \rho_{x}^{\sharp}(f)\rho_{x}^{\sharp}(g) &= \rho_{x}^{\sharp}(fg) \\
        &= fg.
    \end{align*}
    As a result, $ \left(\rho_{x}^{\sharp}(f)/f\right)\left(\rho_{x}^{\sharp}(g)/g\right) = 1 $.  Because there are no non-trivial units in $ A $,
    \begin{align*}
        \rho_{x}^{\sharp}(f)/f &= c_{x} \\
        & \in k^{\ast}.
    \end{align*}
    If $ x,y \in \mathbb{G}_{a} $, then
    \begin{align*}
        c_{x+y} &= \rho_{x+y}^{\sharp}(f)/f \\
        &= \left(\rho_{x+y}^{\sharp}(f)/\rho_{y}^{\sharp}(f)\right) \left(\rho_{y}^{\sharp}(f)/f\right) \\
        &= \rho_{y}^{\sharp}\left(\rho_{x}^{\sharp}(f)/f\right) \left(\rho_{y}^{\sharp}(f)/f\right) \\
        &= \rho_{y}^{\sharp}(c_{x}) c_{y} \\
        &= c_{x} c_{y}.
    \end{align*}
    As a result, the function which sends $ x $ to $ c_{x} $ is a character of $ \mathbb{G}_{a} $.  So $ c_{x} = 1 $ for all $ x \in \mathbb{G}_{a} $, because $ \mathbb{G}_{a} $ has no non-trivial characters.  As a result, $ f,g \in A^{\mathbb{G}_{a}} $.
\end{proof}
\begin{lem} \label{L:beginningEx}
    Let $ \beta: \mathbb{G}_{a} \to \operatorname{GL}(\mathbf{V}) $ be an $ n $-dimensional, linear representation, and let $ \{x_{1},\dots,x_{n-1}\} $ be a basis of the socle of $ \mathbf{V}^{\ast} $ which extends to a basis $ \{x_{1},\dots,x_{n}\} $ of $ \mathbf{V}^{\ast} $.  The ring of invariants is equal to $ k[x_{1},\dots,x_{n-1}] $.
\end{lem}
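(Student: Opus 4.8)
The plan is to reduce everything to the way $\mathbb{G}_{a}$ acts on the single coordinate $x_{n}$ and then win by a degree count in the group parameter $t$. Since $\mathbb{G}_{a}$ is unipotent it has no nontrivial one–dimensional representations, so every irreducible $\mathbb{G}_{a}$-submodule is trivial and $\operatorname{soc}(\mathbf{V}^{\ast})=(\mathbf{V}^{\ast})^{\mathbb{G}_{a}}$. Thus $x_{1},\dots,x_{n-1}$ are invariant linear forms spanning \emph{all} invariant linear forms, and in particular $x_{n}$ is not invariant. Because $\mathbf{V}^{\ast}/\operatorname{soc}(\mathbf{V}^{\ast})$ is one–dimensional, $\mathbb{G}_{a}$ acts trivially on it, so $\beta^{\sharp}(x_{n})-x_{n}\in\operatorname{soc}(\mathbf{V}^{\ast})\otimes k[t]$; evaluating at the identity $t=0$ pins down the $x_{n}$–component, so
\[
  \beta^{\sharp}(x_{n})=x_{n}+y,\qquad y:=\sum_{j=1}^{n-1}q_{j}(t)\,x_{j},
\]
with $q_{j}\in k[t]$ and $q_{j}(0)=0$ (by Lemma~\ref{L:gaPolys} the $q_{j}$ are in fact additive polynomials, though I shall only need $q_{j}(0)=0$). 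Since $x_{n}$ is not invariant, some $q_{j}\neq0$, so $y\neq0$ and $N:=\deg_{t}(y)\geq1$; its leading $t$–coefficient $y_{N}$ is a nonzero element of $R:=k[x_{1},\dots,x_{n-1}]$.

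The inclusion $R\subseteq k[x_{1},\dots,x_{n}]^{\mathbb{G}_{a}}$ is immediate. For the reverse inclusion take $f\in k[x_{1},\dots,x_{n}]^{\mathbb{G}_{a}}$ and write $f=\sum_{i=0}^{d}f_{i}x_{n}^{i}$ with $f_{i}\in R$ and $f_{d}\neq0$; I must show $d=0$. Applying the formula above,
\[
  \beta^{\sharp}(f)=\sum_{i=0}^{d}f_{i}(x_{n}+y)^{i},
\]
so invariance of $f$ says $\sum_{i}f_{i}(x_{n}+y)^{i}=\sum_{i}f_{i}x_{n}^{i}$ inside the integral domain $R[x_{n}][t]$. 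The right–hand side has $t$–degree $0$. On the left, for each $i$ the expansion $(x_{n}+y)^{i}=\sum_{\ell=0}^{i}\binom{i}{\ell}x_{n}^{i-\ell}y^{\ell}$ has $t$–degree exactly $Ni$, the top contribution coming from $\ell=i$ (the binomial coefficient $\binom{i}{i}=1$), with $t$–leading coefficient $y_{N}^{i}\neq0$; since $N\geq1$ the integers $Ni$ for the indices with $f_{i}\neq0$ are pairwise distinct, so no cancellation occurs and $\beta^{\sharp}(f)$ has $t$–degree exactly $Nd$ with $t$–leading coefficient $f_{d}y_{N}^{d}\neq0$. If $d\geq1$ this degree is $\geq N\geq1>0$, contradicting equality with the right–hand side. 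Hence $d=0$ and $f\in R=k[x_{1},\dots,x_{n-1}]$.

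The only genuine content beyond bookkeeping is the first paragraph: identifying $\operatorname{soc}(\mathbf{V}^{\ast})$ with $(\mathbf{V}^{\ast})^{\mathbb{G}_{a}}$ via unipotence and extracting the triangular shape $\beta^{\sharp}(x_{n})=x_{n}+y$ with $q_{j}(0)=0$, which is what guarantees $y\neq0$ and $\deg_{t}(y)\geq1$. Once that is in hand the $t$–degree comparison is the whole argument; the one point worth stating carefully is that the leading $t$–terms of the $(x_{n}+y)^{i}$ cannot cancel, which is automatic because $N\geq1$ separates their degrees. Everything happens inside a polynomial ring over $k$, so the characteristic and possible non‑reducedness of kernels play no role here.
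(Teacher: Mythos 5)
Your proof is correct, and it takes a genuinely different route from the one in the paper. The paper's argument writes an invariant $f$ as $f=\sum_{j=0}^{d}f_{j}(x_{1},\dots,x_{n-1})x_{n}^{j}$, observes that $f-f_{0}=x_{n}g_{1}$ is still invariant, and then invokes Lemma~\ref{L:invtMult} (a product of nonzero elements of a ring with only trivial units lying in $A^{\mathbb{G}_{a}}$ forces both factors to be invariant) to conclude $g_{1}=0$, since $x_{n}$ is not invariant. That lemma's proof in turn rests on the fact that $\mathbb{G}_{a}$ has no nontrivial characters. You instead argue directly by a $t$-degree count: from $\beta^{\sharp}(x_{n})=x_{n}+y$ with $y\neq0$, $\deg_{t}y=N\geq1$, and $t$-leading coefficient $y_{N}\in k[x_{1},\dots,x_{n-1}]\setminus\{0\}$, you read off that $\beta^{\sharp}(f)$ has $t$-degree exactly $Nd$ with $t$-leading coefficient $f_{d}y_{N}^{d}\neq0$ (the degrees $Ni$ being pairwise distinct since $N\geq1$, so no cancellation), which forces $d=0$. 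Your approach is more self-contained, avoiding Lemma~\ref{L:invtMult} entirely and relying only on the integral-domain structure of $k[X][t]$; the paper's approach is shorter on the page but imports the multiplicative structure result. You also supply the preliminary justification (unipotence of $\mathbb{G}_{a}$ identifying $\operatorname{soc}(\mathbf{V}^{\ast})$ with $(\mathbf{V}^{\ast})^{\mathbb{G}_{a}}$, and $q_{j}(0)=0$ so that $N\geq1$) that the paper's proof uses implicitly; it is worth noting that this is exactly where the hypothesis that $\{x_{1},\dots,x_{n-1}\}$ is a basis of the socle, rather than merely part of one, is used to conclude $x_{n}\notin(\mathbf{V}^{\ast})^{\mathbb{G}_{a}}$ and hence $y\neq0$.
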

\begin{proof}
    Assume that $ f(X) $ is an invariant polynomial.  There exist polynomials $ \{f_{j}(x_{1},\dots,x_{n-1})\}_{j=0}^{d} $ such that $ f(X) $ is equal to $ \sum_{j=0}^{d} f_{j}(x_{1},\dots,x_{n-1})x_{n}^{j} $.  If $ g(X) $ is equal to
    \begin{equation*}
        f(X)-f_{0}(x_{1},\dots,x_{n-1}),
    \end{equation*}
    then $ g(X) $ is also invariant.  Let $ g(X) $ equal $ x_{n} g_{1}(X) $.  If $ g_{1}(X) $ is non-zero, then $ x_{n} $ and $ g_{1}(X) $ are invariant by Lemma ~\ref{L:invtMult} (see page \pageref{L:invtMult}).  However, $ x_{n} $ is not invariant.  This is a contradiction unless
    \begin{align*}
        g(X) &= g_{1}(X) \\
        &= 0.
    \end{align*}
    Since $ g(X) $ is equal to zero, $ f(X) \in k[x_{1},\dots,x_{n-1}] $.
\end{proof}
\begin{eg} \label{EG:1}
    The following is an example of a three-dimensional, indecomposable, representation such that the large pedestal ideal is equal to zero.  Let $ k $ be a field of characteristic $ p>0 $, and let $ c_{1}(t) $ and $ c_{2}(t) $ be two linearly independent, additive, polynomials.  Also let $ \{u_{1},u_{2},u_{3}\} $ be a basis of a three-dimensional vector space $ \mathbf{V} $ with dual basis $ \{x_{1},x_{2},x_{3}\} $.  Define $ \beta: \mathbb{G}_{a} \to \operatorname{GL}(\mathbf{V}) $ to be the three-dimensional representation with the following co-action:
    \begin{align*}
        x_{1} & \mapsto x_{1} \\
        x_{2} & \mapsto x_{2} \\
        x_{3} & \mapsto x_{3}+c_{2}(t)x_{2}+c_{1}(t)x_{1}.
    \end{align*}
    This is only possible if the characteristic of $ k $ is greater than zero.  We shall now show that $ \mathbf{V} $ is indecomposable.

    The co-action for $ \beta^{\vee} $ is described below:
    \begin{align*}
        u_{3} & \mapsto u_{3} \\
        u_{2} & \mapsto u_{2}+c_{2}(t)u_{3} \\
        u_{1} & \mapsto u_{1}+c_{1}(t)u_{3}.
    \end{align*}
    If $ b_{1}u_{1}+b_{2}u_{2}+b_{3}u_{3} $ is invariant, then:
    \begin{align*}
        b_{1}u_{1}+b_{2}u_{2}+b_{3}u_{3} & = (\beta^{\vee})^{\sharp}(b_{1}u_{1}+b_{2}u_{2}+b_{3}u_{3}) \\
        &= b_{1}(\beta^{\vee})^{\sharp}(u_{1})+b_{2}(\beta^{\vee})^{\sharp}(u_{2})+b_{3} (\beta^{\vee})^{\sharp}(u_{3}) \\
        &= b_{1}(u_{1}+c_{1}(t)u_{3})+b_{2}(u_{2}+c_{2}(t)u_{3})+b_{3}u_{3} \\
        &= b_{1}u_{1}+b_{2}u_{2}+b_{3}u_{3}+(b_{1}c_{1}(t)+b_{2}c_{2}(t))u_{3}.
    \end{align*}
    The polynomial $ b_{1}c_{1}(t)+b_{2}c_{2}(t) $ must equal zero for $ b_{1}u_{1}+b_{2}u_{2}+b_{3}u_{3} $ to be invariant.  This contradicts the fact that $ c_{1}(t) $ and $ c_{2}(t) $ are linearly independent, unless both $ b_{1} $ and $ b_{2} $ are equal to zero.

    So the dimension of $ \mathbf{V}^{\mathbb{G}_{a}} $ is one.  Therefore, $ \mathbf{V} $ and $ \mathbf{V}^{\ast} $ are indecomposable.

    The ring of invariants of $ k[x_{1},x_{2},x_{3}] $ is $ k[x_{1},x_{2}] $ by Lemma ~\ref{L:beginningEx}.  Assume that there is a $ b(t) $-pair $ (g(X),h(X)) $.  There are polynomials $ \{g_{j}(x_{1},x_{2})\}_{j=0}^{d} $ such that $ g(X) $ is equal to $ \sum_{j=0}^{d} g_{j}(x_{1},x_{2})x_{3}^{j} $.  Let $ S $ be the set below:
    \begin{equation*}
        S = \{ j \, \, \text{s.t.} \, \, g_{j}(x_{1},x_{2}) \ne 0 \, \, \text{and} \, \, j \, \, \text{is not a power of } p\}.
    \end{equation*}  
    If $ G_{1}(X) $ and $ G_{2}(X) $ are described below:
    \begin{align*}
        G_{1}(X)&:= \sum_{j \in S} g_{j}(x_{1},x_{2})x_{3}^{j} \\
        G_{2}(X)&:= \sum_{j \notin S} g_{j}(x_{1},x_{2})x_{3}^{j},
    \end{align*}
    then $ g(X) $ is equal to $ G_{1}(X)+G_{2}(X) $.  By Corollary ~\ref{Cor:expansion} (see page \pageref{Cor:expansion}),
    \begin{equation*}
        \delta(x_{3}^{j})-(\delta(x_{3})^{j}) \in \langle x_{3} \rangle k[X][t],
    \end{equation*}
    so
    \begin{align*}
        k[x_{1},x_{2}][t] & \ni h(X)b(t)-\delta(G_{2}(X))-G_{1}(x_{1},x_{2},\delta(x_{3})) \\
        &= \delta(G_{1}(X))-G_{1}(x_{1},x_{2},\delta(x_{3})) \\
        &= \delta(\sum_{j\in S} g_{j}(x_{1},x_{2})x_{3}^{j})-\sum_{j\in S^{c}} g_{j}(x_{1},x_{2})\delta(x_{3})^{j} \\
        &= \sum_{j \in S} g_{j}(x_{1},x_{2})\delta(x_{3}^{j})-\sum_{j \in S^{c}} g_{j}(x_{1},x_{2})\delta(x_{3})^{j} \\
        &= \sum_{j \in S} g_{j}(x_{1},x_{2})\left(\delta(x_{3}^{j})-\delta(x_{3})^{j}\right) \\
        & \in \langle x_{3} \rangle k[x_{1},x_{2},x_{3}][t].
    \end{align*}
    Therefore, $ \delta(G_{1}(X)) =G_{1}(x_{1},x_{2},\delta(x_{3})) $.  However, a polynomial $ H(X) $ equal to $ \sum_{j=0}^{s} h_{j}(x_{1},x_{2})x_{3}^{j} $ has the property that $ \delta(H(X)) = H(x_{1},x_{2},\delta(x_{3})) $ if and only if $ h_{j}(x_{1},x_{2}) $ is zero whenever $ j $ is not a power of $ p $ by Corollary ~\ref{Cor:expansion}.  So $ G_{1}(X) $ is equal to zero.

    By our assumption that $ G_{2}(X) $ is the sum of polynomials of the form $ g_{p^{j}}(x_{1},x_{2})x_{3}^{p^{j}} $ for $ j \in \mathbb{N}_{0} $, there are polynomials $ \{g_{j}(x_{1},x_{2})\}_{j=0}^{s} $ such that $ g(X) $ is equal to $ \sum_{j=0}^{s} g_{j}(x_{1},x_{2})x_{3}^{p^{j}} $.  By Lemma ~\ref{L:deltaLemma} (see page \pageref{L:deltaLemma}) and its Corollary,
    \begin{align}
        \delta(g(X))&= \delta\left(\sum_{j=0}^{s} g_{j}(x_{1},x_{2})x_{3}^{p^{j}}\right), \notag \\
        &= \sum_{j=0}^{s} g_{j}(x_{1},x_{2})\delta(x_{3}^{p^{j}}), \notag \\
        &= \sum_{j=0}^{s} g_{j}(x_{1},x_{2})\delta(x_{3})^{p^{j}}, \notag \\
        &= \sum_{j=0}^{s} g_{j}(x_{1},x_{2}) \left(c_{1}(t)x_{1}+c_{2}(t)x_{2}\right)^{p^{j}}, \notag \\
        &= \sum_{j=0}^{s} g_{j}(x_{1},x_{2}) (c_{1}(t)x_{1})^{p^{j}}+ \sum_{j=0}^{s} g_{j}(x_{1},x_{2})(c_{2}(t)x_{2})^{p^{j}}, \notag \\
        &= g(x_{1},x_{2},c_{1}(t)x_{1})+g(x_{1},x_{2},c_{2}(t)x_{2}). \label{E:59}
    \end{align}
    Equation ~\eqref{E:59} shows that the variance of $ g(X) $ is greater than or equal to three (see Definition ~\ref{D:pseudoPair} on page \pageref{D:pseudoPair}).  This contradicts the fact that $ (g(X),h(X)) $ is a $ b(t) $-pair by Corollary ~\ref{Cor:pairSubSpace} (see page \pageref{Cor:pairSubSpace}).  As a result, the large pedestal ideal of $ k[x_{1},x_{2},x_{3}] $ is equal to zero.

    We will show that if $ \mathbf{V} $ is a linear representation such that $ \mathfrak{P}_{g}(S_{k}(\mathbf{V}^{\ast})) $ is equal to zero, then $ S_{k}(\mathbf{V}^{\ast})^{\mathbb{G}_{a}} $ is equal to $ S_{k}\left((\mathbf{V}^{\ast})^{\mathbb{G}_{a}}\right) $.  As a result, a representation whose large pedestal ideal is equal to zero is utterly uninteresting from the perspective of classical invariant theory.
\end{eg}
\begin{thm} \label{T:zeroLargePlinth}
    If $ \beta: \mathbb{G}_{a} \to \operatorname{GL}(\mathbf{V}) $ is an $ n $-dimensional, linear representation, such that $ \mathfrak{P}_{g}(S_{k}(\mathbf{V}^{\ast})) $ is equal to zero, then $ S_{k}(\mathbf{V}^{\ast})^{\mathbb{G}_{a}} $ is equal to $ S_{k}\left((\mathbf{V}^{\ast})^{\mathbb{G}_{a}}\right) $.
\end{thm}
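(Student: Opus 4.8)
The plan is to fix an upper‑triangular basis $\{x_{1},\dots,x_{n}\}$ of $\mathbf{V}^{\ast}$ chosen so that $(\mathbf{V}^{\ast})^{\mathbb{G}_{a}}$ is spanned by $\{x_{1},\dots,x_{s}\}$; then $S_{k}\bigl((\mathbf{V}^{\ast})^{\mathbb{G}_{a}}\bigr)=k[x_{1},\dots,x_{s}]$, and since these forms are invariant the inclusion $S_{k}\bigl((\mathbf{V}^{\ast})^{\mathbb{G}_{a}}\bigr)\subseteq S_{k}(\mathbf{V}^{\ast})^{\mathbb{G}_{a}}$ is automatic. So the task is to show every invariant $f\in k[X]^{\mathbb{G}_{a}}$ lies in $k[x_{1},\dots,x_{s}]$. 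I would induct on $n=\dim\mathbf{V}$. If $n=s$ the representation is trivial and there is nothing to prove; otherwise, after reordering, assume $x_{n}$ is non‑invariant. The sub‑vector space $W=\langle x_{1},\dots,x_{n-1}\rangle$ is a sub‑representation, so $k[x_{1},\dots,x_{n-1}]=S_{k}(W)$ is a $\mathbb{G}_{a}$‑stable subring, and $\mathfrak{P}_{g}(S_{k}(W))=0$: any $c(t)$‑pair $(g,h)$ with $g,h\in S_{k}(W)$ is also a $c(t)$‑pair in $S_{k}(\mathbf{V}^{\ast})$ (the coaction and the notion of invariance restrict), hence $h\in\mathfrak{P}_{g}(S_{k}(\mathbf{V}^{\ast}))=0$. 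Moreover $W^{\mathbb{G}_{a}}=\langle x_{1},\dots,x_{s}\rangle$, so the inductive hypothesis applied to $W$ gives $k[x_{1},\dots,x_{n-1}]^{\mathbb{G}_{a}}=k[x_{1},\dots,x_{s}]$. It therefore suffices to prove that every invariant $f$ lies in $k[x_{1},\dots,x_{n-1}]$ — i.e. that an invariant cannot genuinely depend on the top variable $x_{n}$. This generalizes Lemma~\ref{L:beginningEx}, which is exactly the case where all of $x_{1},\dots,x_{n-1}$ already lie in the socle.

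For this step I would write $f=\sum_{j=0}^{d}f_{j}(x_{1},\dots,x_{n-1})\,x_{n}^{j}$ and aim to show $f_{j}=0$ for $j\ge 1$. Because $\beta^{\sharp}(x_{i})\in k[x_{1},\dots,x_{n-1}][t]$ for $i<n$ and $\beta^{\sharp}(x_{n})=x_{n}+\delta(x_{n})$ with $\delta(x_{n})=\beta^{\sharp}(x_{n})-x_{n}\in k[x_{1},\dots,x_{n-1}][t]$ a $t$‑dependent linear form, setting $x_{n}\mapsto 0$ in the identity $\beta^{\sharp}(f)=f$ — equivalently, reducing $\delta(f)=0$ modulo $\langle x_{n}\rangle$ and invoking the product rule of Lemma~\ref{L:deltaLemma} — yields $\sum_{j=0}^{d}\beta^{\sharp}(f_{j})\,\delta(x_{n})^{j}=f_{0}$ in $k[x_{1},\dots,x_{n-1}][t]$. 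This is the analogue of equation~\eqref{E:59}. As in Example~\ref{EG:1} I would first dispose of the exponents $j$ that are not powers of $p$: by Corollary~\ref{Cor:expansion} one has $\delta(x_{n}^{p^{k}})=\delta(x_{n})^{p^{k}}$ exactly, while $\delta(x_{n}^{j})\equiv\delta(x_{n})^{j}$ only modulo $\langle x_{n}\rangle$ for the other $j$, and comparing the two sides forces the ``non‑$p$‑power part'' of $f$ to vanish. This reduces us to the case $f=\sum_{k}f_{p^{k}}(x_{1},\dots,x_{n-1})\,x_{n}^{p^{k}}$, in which $\delta$ behaves additively and the identity becomes $\sum_{k}\beta^{\sharp}(f_{p^{k}})\,\delta(x_{n})^{p^{k}}=\delta(f_{0})$.

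The last step — showing this forces $f_{p^{k}}=0$ for all $k\ge 1$ — is where I expect the real difficulty, and where the hypothesis $\mathfrak{P}_{g}(S_{k}(\mathbf{V}^{\ast}))=0$ is essential. The idea is to isolate the top‑degree coefficient in $t$ of the identity $\sum_{k}\beta^{\sharp}(f_{p^{k}})\,\delta(x_{n})^{p^{k}}=\delta(f_{0})$: if some $f_{p^{k}}$ with $k\ge 1$ is nonzero, the leading‑$t$ analysis produces a rational function $g$ (after localizing at a suitable invariant) whose $\mathbb{G}_{a}$‑orbit has two‑dimensional affine hull, i.e. $\operatorname{var}(g)=2$ in the sense of Definition~\ref{D:pseudoPair}; by Corollary~\ref{Cor:pairSubSpace} this is equivalent to a nontrivial $c(t)$‑pair $(g,h^{e})$ on $S_{k}(\mathbf{V}^{\ast})_{h}$, contradicting $\mathfrak{P}_{g}(S_{k}(\mathbf{V}^{\ast}))=0$. (This is precisely the mechanism that fails for, say, $x_{1}\mapsto x_{1}$, $x_{2}\mapsto x_{2}+t x_{1}$, $x_{3}\mapsto x_{3}+t x_{2}+(t^{2}/2)x_{1}$, where $(x_{2},x_{1})$ is already a principle pair and $x_{1}x_{3}-x_{2}^{2}/2$ is a nontrivial invariant — consistent, since there $\mathfrak{P}_{g}\neq 0$.) Hence $f=f_{0}\in k[x_{1},\dots,x_{n-1}]$, which combined with the inductive hypothesis gives $f\in k[x_{1},\dots,x_{s}]$ and closes the induction. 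The main obstacle is making this extraction precise: one must track how the leading $t$‑behaviour of the various $\beta^{\sharp}(f_{p^{k}})$ interacts with that of $\delta(x_{n})^{p^{k}}$ and pin down exactly which rational combination realizes variance two; the remainder is bookkeeping resting on Lemmas~\ref{L:deltaLemma} and \ref{L:beginningEx}, Corollary~\ref{Cor:expansion}, and Corollary~\ref{Cor:pairSubSpace}.
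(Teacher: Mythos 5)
Your proof shares the outer scaffold with the paper's argument — induction on $\dim\mathbf{V}$, stripping off the top basis vector $x_{n}$ to form a subrepresentation $\mathbf{W}$, applying the inductive hypothesis to $\mathbf{W}$ — and you are right to verify explicitly that $\mathfrak{P}_{g}(S_{k}(\mathbf{W}^{\ast}))=0$, a point the paper glosses over with ``the induction hypothesis holds for $\mathbf{W}$.'' But the heart of the inductive step is handled by a genuinely different mechanism in the paper, and your replacement has gaps.

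The paper's step is: use the inductive hypothesis and a five-lemma/exact-sequence argument (applied to $0\to k[\widetilde{X}]\to k[X]\to M\to 0$) to force $k[X]^{\mathbb{G}_{a}}\subseteq k[x_{1},\dots,x_{m},x_{N}]$, where $x_{1},\dots,x_{m}$ span the socle. \emph{Then} the coefficients $r_{j}$ of an invariant $f=\sum_{j}r_{j}x_{N}^{j}$ are automatically invariant, and the argument of Lemma~\ref{L:invtMult} finishes in one line: $f-r_{0}=x_{N}\cdot f_{1}$ is invariant, the ring has no nontrivial units, so $x_{N}$ and $f_{1}$ would both have to be invariant, forcing $f_{1}=0$. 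You never invoke Lemma~\ref{L:invtMult} and you never reduce the coefficients to the socle ring; instead you attempt to show directly that every invariant $f=\sum_{j}f_{j}(x_{1},\dots,x_{n-1})x_{n}^{j}$ has $f_{j}=0$ for $j\geq 1$, via the coefficient analysis of Example~\ref{EG:1} followed by a variance/pair argument.

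There are two concrete problems with that replacement. First, the ``dispose of the non-$p$-power exponents'' step is lifted from Example~\ref{EG:1}, where it works precisely because the coefficients $g_{j}$ there lie in $k[x_{1},x_{2}]$, which is the socle ring and hence invariant; $\delta$ then passes through the $g_{j}$ untouched and the modular comparison closes. In your setting the $f_{j}$ lie in $k[x_{1},\dots,x_{n-1}]$, which in general contains non-invariant elements ($x_{s+1},\dots,x_{n-1}$), so $\delta(f_{j})\neq 0$, and the identity you obtain after reducing $\delta(f)=0$ modulo $\langle x_{n}\rangle$ is $\sum_{j\geq 1}\beta^{\sharp}(f_{j})\,\delta(x_{n})^{j}=-\delta(f_{0})$, in which the $\beta^{\sharp}(f_{j})$ are themselves $t$-dependent; the ``comparing the two sides forces the non-$p$-power part to vanish'' step does not carry over as stated. (The circular temptation is to appeal to Theorem~\ref{T:trivialPedForm} or Theorem~\ref{T:trivialLargePedForm} to force $n-1=s$, but those are proved \emph{after} and \emph{using} this theorem.) Second, the crucial conclusion — that a nonzero $f_{p^{k}}$, $k\geq 1$, forces the existence of an element of variance two and hence of a nontrivial $c(t)$-pair — is only gestured at; you acknowledge it is ``where the real difficulty is,'' but it is exactly the part that would have to be made precise for the argument to be a proof, and it is not at all clear that the leading-$t$ bookkeeping produces variance exactly two rather than, as in Example~\ref{EG:1}, variance at least three. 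So the approach diverges from the paper's and, as written, has a genuine gap: the hard step is neither carried out nor obviously reducible to the lemmas you cite.
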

\begin{proof}
    Let us induce on the dimension $ n $ of $ \mathbf{V} $.  If $ n $ is equal to one, then the representation is trivial, and the theorem holds.  Now assume that if $ n<N $ and $ \beta: \mathbb{G}_{a} \to \operatorname{GL}(\mathbf{V}) $ is an $ n $-dimensional, linear representation such that $ \mathfrak{P}_{g}(S_{k}(\mathbf{V}^{\ast})) $ is equal to zero, then $ S_{k}(\mathbf{V}^{\ast})^{\mathbb{G}_{a}} $ is equal to $ S_{k}\left((\mathbf{V}^{\ast})^{\mathbb{G}_{a}}\right) $.

    Let $ \beta:\mathbb{G}_{a} \to \operatorname{GL}(\mathbf{V}) $ be an $ N $-dimensional, linear representation such that $ \mathfrak{P}_{g}(S_{k}(\mathbf{V}^{\ast})) $ is equal to zero.  Let $ \{x_{1},\dots,x_{m}\} $ be a basis of the socle of $ \mathbf{V}^{\ast} $ and extend it to an upper triangular basis $ \{x_{1},\dots,x_{N}\} $ of $ \mathbf{V}^{\ast} $.  If $ \{x_{1},\dots,x_{N-1}\} $ is a dual basis for $ \mathbf{W} $, then the induction hypothesis holds for $ \mathbf{W} $.

    If $ \overline{x_{N}} $ is equal to $ x_{N}+ \langle x_{1},\dots,x_{N-1} \rangle $, then let us denote $ k[x_{1},\dots,x_{N-1}] $ by $ k[\widetilde{X}] $ and a polynomial of $ k[\widetilde{X}] $ by $ h(\widetilde{X}) $.  Let $ M $ be the infinite dimensional vector space spanned by $ \cup_{j=1}^{\infty} \{\overline{x_{N}}^{j}\} $.  The following sequence of vector spaces is exact:
    \begin{equation} \label{E:54}
    \xymatrix{
        0 \ar[r] & k[\widetilde{X}] \ar[r] & k[X] \ar[r] & M \ar[r] & 0
        }.
    \end{equation}
    Because $ k[\widetilde{X}]^{\mathbb{G}_{a}} $ is equal to $ k[x_{1},\dots,x_{m}] $ the following diagram commutes:
    \begin{equation*}
    \xymatrix{
          & 0 \ar[d] & 0 \ar@{-->}[d] & 0 \ar[d] \\
        0 \ar[r] & k[x_{1},\dots,x_{m}] \ar[d]  \ar[r] & k[X]^{\mathbb{G}_{a}} \ar@{-->}[d] \ar[r] & M \ar[d] \\
        0 \ar[r] & k[x_{1},\dots,x_{m}] \ar[r] & k[x_{1},\dots,x_{m}, x_{N}] \ar[r] & M
        },
    \end{equation*}
    and so $ k[X]^{\mathbb{G}_{a}} $ is a sub-space of $ k[x_{1},\dots,x_{m},x_{N}] $ by the five lemma \cite[Chapter 2, Hom and Tensor, Section 2, Tensor Products, Proposition 2.72, (ii)]{Rotman}.  If \linebreak $ f(X) \in k[X]^{\mathbb{G}_{a}} $, then we may write $ f(X) $ as follows:
    \begin{equation*}
        f(X) = \sum_{j=0}^{d} r_{j}(x_{1},\dots,x_{m}) x_{N}^{j},
    \end{equation*}
    for some $ d \in \mathbb{N}_{0} $ and polynomials $ \{r_{j}(x_{1},\dots,x_{m})\}_{j=0}^{d} $.  Since $ f(X) \in k[X]^{\mathbb{G}_{a}} $,
    \begin{equation*}
        f(X)-r_{0}(x_{1},\dots,x_{m}) \in k[X]^{\mathbb{G}_{a}}.
    \end{equation*}
    Let $ f_{1}(X) $ equal $ \sum_{j=1}^{d} r_{j}(x_{1},\dots,x_{m})x_{N}^{j-1} $.  If $ f_{1}(X) $ is non-zero then
    \begin{align}
        x_{N}f_{1}(X) &= f(X)-r_{0}(x_{1},\dots,x_{m}) \notag \\
        &\in k[X]^{\mathbb{G}_{a}}. \label{E:58}
    \end{align}
    However, by Lemma ~\ref{L:invtMult} (see page \pageref{L:invtMult}) the only way ~\eqref{E:58} can hold is if $ f_{1}(X) $ is equal to zero.  So,
    \begin{align*}
        f(X) &= r_{0}(x_{1},\dots,x_{m}) \\
        & \in k[x_{1},\dots,x_{m}].
    \end{align*}
\end{proof}
If $ \beta: \mathbb{G}_{a} \to \operatorname{GL}(\mathbf{V}) $ is an $ n $-dimensional, linear representation, $ \{x_{1},\dots,x_{n}\} $ is an upper triangular basis of $ \mathbf{V}^{\ast} $ and we write $ \beta^{\sharp}(x_{i}) $ as $ v_{i}(X,t) $, then the graph morphism $ \gamma: \mathbb{G}_{a} \times \mathbf{V} \to \mathbf{V} \times \mathbf{V} $ is equal to $ (p_{2},\beta) $ where $ p_{2} $ is the natural projection of $ \mathbb{G}_{a} \times \mathbf{V} $ onto $ \mathbf{V} $.  There is a $ \mathbb{G}_{a} $-equivariant embedding of $ \mathbf{V} $ into
\begin{align*}
    \mathbb{P}^{n}_{k} & \cong \mathbb{P}(k \oplus \mathbf{V}^{\ast}) \\
    & \cong \operatorname{Proj}(k[z_{0},\dots,z_{n}]),
\end{align*}
and a $ \mathbb{G}_{a} $-equivariant embedding of $ \mathbb{G}_{a} $ into $ \mathbb{P}^{1}_{k} \cong \operatorname{Proj}(k[t_{0},t_{1}]) $, which identifies $ \mathbb{G}_{a} $ with $ D_{+}(t_{1}) $.  It may not be possible to extend $ \gamma $ to a morphism $ \lambda $ from $ \mathbb{P}^{1}_{k} \times \mathbb{P}^{n}_{k} $ to $ \mathbb{P}^{n}_{k} \times \mathbb{P}^{n}_{k} $ such that $ \lambda \mid_{D_{+}(t_{1}) \times D_{+}(z_{0})} = \gamma $.  However, there may be an open sub-variety $ U \subseteq \mathbf{V} $ such that there exists $ \lambda: \mathbb{P}^{1}_{k} \times U \to U \times \mathbb{P}^{n}_{k} $ with the property that $ \lambda \mid_{D_{+}(t_{1}) \times U} = \gamma \mid_{\mathbb{G}_{a} \times U} $.  If there is such an open sub-variety, then let $ \Gamma $ be the closure of the image of $ \lambda $ in $ \mathbb{P}^{n}_{k} \times \mathbb{P}^{n}_{k} $.  If $ k[W,Y] $ is the homogeneous coordinate ring of $ \mathbb{P}^{n}_{k} \times \mathbb{P}^{n}_{k} $, then one might suspect that if $ \{ f_{i}(W,Y)\}_{i=1}^{\ell} $ is a reduced, Groebner basis of the ideal sheaf of $ \Gamma $ with respect to the co-lexicographical ordering and $ \sum_{J} f_{i,J}(W)Y^{J} $ is the expansion of $ f_{i}(W,Y) $, then the set $ \{f_{i,J}(1,x_{1},\dots,x_{n})\}_{1\le i \le \ell, J} $ might separate orbits on $ U $.  The next theorem shows that there is such an open sub-variety $ U $ and that this set of polynomials does indeed separate orbits on $ U $.
\begin{thm} \label{T:graphSep}
    Let $ \beta: \mathbb{G}_{a} \to \operatorname{GL}(\mathbf{V}) $ be an $ n $-dimensional, linear representation of $ \mathbb{G}_{a} $ and let $ \{x_{1},\dots,x_{n}\} $ be an upper triangular basis of $ \mathbf{V}^{\ast} $.  There is an open sub-variety $ U \subseteq \mathbf{V} $ and a morphism $ \lambda: \mathbb{P}^{1}_{k} \times U \to U \times \mathbb{P}^{n}_{k} $ such that $ \lambda \mid_{D_{+}(t_{1}) \times U} $ is equal to $ \gamma \mid_{\mathbb{G}_{a} \times U} $.  Let $ \Gamma $ be the closure of the image of $ \lambda $ in $ \mathbb{P}^{n}_{k} \times \mathbb{P}^{n}_{k} $, and let us denote the homogeneous coordinate ring of $ \mathbb{P}^{n}_{k} \times \mathbb{P}^{n}_{k} $ by $ k[W][Y] $.
    If $ \{f_{i}(W,Y)\}_{i=1}^{\ell} $ is a reduced Groebner basis of the ideal sheaf of $ \Gamma $ in $ \mathbb{P}^{n}_{k} \times \mathbb{P}^{n}_{k} $ with respect to the co-lexicographical ordering on $ k[W][Y] $, i.e.
    \begin{equation*}
        w_{0} \prec_{\operatorname{colex}} \cdots \prec_{\operatorname{colex}} w_{n} \prec_{\operatorname{colex}} y_{0} \prec_{\operatorname{colex}} \cdots \prec_{\operatorname{colex}} y_{n},
    \end{equation*}
    and
    \begin{equation*}
        f_{i}(W,Y) = \sum_{J} f_{i,J}(W)Y^{J},
    \end{equation*}
    with respect to multi-index notation, then the set of invariant functions below:
    \begin{equation*}
        \{f_{i,J}(X)\}_{i=1,\dots,\ell, J \, \, \text{s.t. } f_{i,J}(X) \ne 0}
    \end{equation*}
    separates orbits on $ U $.
\end{thm}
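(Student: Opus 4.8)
\emph{Projective setup.} The plan is to run the graph/elimination construction of orbit separators, the only real subtlety being that passing to projective space works only after restricting to a dense open set. Homogenizing each $v_i(X,t)=\beta^{\sharp}(x_i)$ in $X$ (to degree one, using $z_0$) and in $t$ (to the common degree $d=\max_i\deg_t v_i$, using $t_1$) produces a rational map $\widetilde\lambda:\mathbb{P}^1_k\times\mathbf{V}\dashrightarrow\mathbb{P}^n_k\times\mathbb{P}^n_k$ whose restriction to $D_+(t_1)\times D_+(z_0)$ is $\gamma$. Its indeterminacy locus $B$ is closed, and since $p_2:\mathbb{P}^1_k\times\mathbf{V}\to\mathbf{V}$ is proper, $p_2(B)$ is closed; I would set $U:=\mathbf{V}\setminus p_2(B)$, which is nonempty because at $([1:0],x)$ for generic $x$ the map $\widetilde\lambda$ is already defined, the combination of the leading coefficients of the $q_{i,j}(t)$ against the coordinates of $x$ not vanishing identically. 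Over $U$, $\widetilde\lambda$ restricts to a morphism $\lambda:\mathbb{P}^1_k\times U\to U\times\mathbb{P}^n_k$ with $\lambda\mid_{D_+(t_1)\times U}=\gamma\mid_{\mathbb{G}_a\times U}$, and $\Gamma$ is the closure of its image. Now the set $R$ of pairs $(a,t\ast a)$, $a\in\mathbf{V}$, $t\in\mathbb{G}_a$, is irreducible and stable under the action of $\mathbb{G}_a\times\mathbb{G}_a$ on the two factors, since $(\sigma\ast a,\tau\ast(t\ast a))=(\sigma\ast a,(\sigma+(\tau+t-\sigma))\ast a)\in R$; as $\Gamma$ is its closure, $\Gamma$ is $\mathbb{G}_a\times\mathbb{G}_a$-stable, so the bihomogeneous ideal $I(\Gamma)\subseteq k[W][Y]$ is stable under the $\mathbb{G}_a$-action on the $W$-variables. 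The first projection $\pi_1:\Gamma\to\mathbb{P}^n_k$ is surjective with $I(\Gamma)\cap k[W]=0$, and its generic fibre is the integral closure of the orbit of the generic point of $\mathbf{V}$; so by generic flatness and spreading out there is a dense open $U_0\subseteq U$, which I may take $\mathbb{G}_a$-stable, such that for every $x\in U_0$ the fibre $\pi_1^{-1}(x)$ is reduced with underlying set $\overline{\mathbb{G}_a x}\subseteq\mathbb{P}^n_k$ and $I(\Gamma)\mid_{W=\widetilde x}=I\bigl(\overline{\mathbb{G}_a x}\bigr)$ in $k[Y]$, where $\widetilde x=(1,x_1,\dots,x_n)$.

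\emph{Invariance of the $f_{i,J}(X)$.} Here the choice of ordering is decisive. Numbering the $w_i$ (and likewise the $y_i$) compatibly with the socle filtration of $\mathbf{V}^{\ast}$ makes the unipotent action send each $w_i$ to $w_i$ plus a combination of $w_j$ with $j<i$, while fixing $w_0$; since replacing a variable in a monomial by one of strictly smaller index strictly lowers the co-lexicographic order, and since every $y_i$ dominates every $w_j$, the $\mathbb{G}_a$-action preserves the leading term of every bihomogeneous polynomial. Combining this with the $\mathbb{G}_a\times\mathbb{G}_a$-stability of $I(\Gamma)$ and the uniqueness of the reduced Groebner basis, one obtains that each $f_i$ is itself $\mathbb{G}_a\times\mathbb{G}_a$-fixed; consequently every coefficient $f_{i,J}(W)$ is $\mathbb{G}_a$-invariant, i.e.\ $f_{i,J}(X)\in S_k(\mathbf{V}^{\ast})^{\mathbb{G}_a}$.

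\emph{Separation on $U_0$.} Suppose $x,x'\in U_0$ satisfy $f_{i,J}(x)=f_{i,J}(x')$ for all $i$ and $J$; then $f_i(\widetilde x,Y)=f_i(\widetilde{x'},Y)$ for all $i$. On $U_0$ the monomial leading coefficients of the $f_i$ do not vanish under $W\mapsto\widetilde x$, so $\{f_i(\widetilde x,Y)\}_i$ still generates $I(\Gamma)\mid_{W=\widetilde x}=I\bigl(\overline{\mathbb{G}_a x}\bigr)$, and likewise for $x'$; therefore $I\bigl(\overline{\mathbb{G}_a x}\bigr)=I\bigl(\overline{\mathbb{G}_a x'}\bigr)$, that is, $\overline{\mathbb{G}_a x}=\overline{\mathbb{G}_a x'}$. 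Since $\mathbb{G}_a$ is connected and unipotent, the boundary of a one-dimensional orbit consists of fixed points only; hence $x\in\overline{\mathbb{G}_a x'}$, $x'\in\overline{\mathbb{G}_a x}$ and $\overline{\mathbb{G}_a x}=\overline{\mathbb{G}_a x'}$ together force $x'\in\mathbb{G}_a x$. The reverse implication is exactly the invariance proved above, so $\{f_{i,J}(X)\}$ separates orbits on $U_0$, and replacing $U$ by $U\cap U_0$ gives the open set in the statement.

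\emph{Main obstacle.} The delicate point is controlling the reduced Groebner basis of $I(\Gamma)$ under the specialization $W\mapsto\widetilde x$ and under the $\mathbb{G}_a$-action: one must confine the bad locus — where a leading coefficient of some $f_i$ vanishes, where a fibre of $\pi_1$ is non-reduced, or where the specialized, respectively translated, generators cease to form a Groebner basis — to a proper closed subset, which is precisely where generic flatness, the generic integrality of the fibres of $\pi_1$, the $\mathbb{G}_a\times\mathbb{G}_a$-stability, and the standard openness of good specialization for Groebner bases all enter.
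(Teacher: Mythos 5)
Your proposal takes a genuinely different route than the paper's for both the invariance and the separation steps, and it is worth flagging one concrete soft spot.

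For invariance the paper simply evaluates: $f_i(t_0\ast a,a)=0$ because $(t_0\ast a,a)\in\Gamma$, and then compares with $f_i(a,a)=0$ to ``read off'' that each $f_{i,J}$ is invariant. Your argument instead observes that $\Gamma$ is $\mathbb{G}_a\times\mathbb{G}_a$-stable (a nice structural observation the paper does not make explicit), notes that the upper-triangular $W$-action strictly decreases monomials in the chosen co-lex order, and then appeals to uniqueness of the reduced Groebner basis to conclude that each $f_i$ is $\mathbb{G}_a$-fixed. This last inference is the gap: preserving the leading term of every polynomial and preserving the ideal does \emph{not} force the reduced Groebner basis to be fixed element-by-element. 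The point is that $t_0\cdot f_i$ has the same leading term as $f_i$ and lies in $I(\Gamma)$, but the unipotent substitution can push tail monomials of $f_i$ into the initial ideal, so $t_0\cdot f_i$ need no longer be reduced; uniqueness only says that \emph{after reducing} $t_0\cdot f_i$ one gets $f_i$ back, not that $t_0\cdot f_i=f_i$. (The simplest illustration: for $I=\langle w_1, y_1\rangle$ with $w_1\mapsto w_1$, $y_1\mapsto y_1$, $w_2\mapsto w_2+tw_1$, $\dots$, the reduced basis $\{w_1,y_1\}$ is fixed; but if a reduced element had a tail monomial involving a moved variable, stability of the ideal would not prevent the translated tail from acquiring a term divisible by another $\operatorname{LT}(f_j)$.) The correct conclusion available to you is that $t_0\cdot f_i-f_i\in I(\Gamma)$ with strictly smaller leading term; turning that into invariance of the coefficients $f_{i,J}$ needs an additional argument, which is precisely what the paper's evaluation trick supplies (albeit tersely): on the dense locus $\{(t_0\ast a,a)\}\subseteq\Gamma$ the $Y$-variables are independent of $t_0$, so one can compare $Y$-coefficients.

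For separation, your argument (specialize $W\mapsto\widetilde x$, compare the ideals $I(\overline{\mathbb{G}_a x})$ and $I(\overline{\mathbb{G}_a x'})$, then use that boundary points of a $\mathbb{G}_a$-orbit are fixed points) is more careful than the paper's, which simply asserts that $(b,a)\in\Gamma$ forces $b\in\mathbb{G}_a\ast a$ without addressing the possibility that $a$ lies only in the \emph{closure} of $\mathbb{G}_a b$. You correctly see that the symmetric membership $(a,b)\in\Gamma$ (which follows by the same computation) is what lets one conclude equality of orbit closures, and then the connected-unipotent boundary argument closes the loop. Your acknowledged ``main obstacle'' paragraph is also honest about the generic-flatness and openness-of-Groebner-specialization hypotheses needed for the fiberwise statement $I(\Gamma)\mid_{W=\widetilde x}=I(\overline{\mathbb{G}_a x})$, which the paper glosses over by baking its $U$ directly into the non-vanishing of leading $t$-coefficients $\phi_{d_i}(x_i)$.

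In short: your setup and separation step are a more careful and essentially equivalent version of the paper's, but the invariance step rests on the unsupported claim that the reduced Groebner basis is $\mathbb{G}_a$-fixed; you should either replace it with the paper's evaluation argument (made precise by expanding in powers of $t_0$ and comparing $Y$-coefficients) or show directly that $t_0\cdot f_i-f_i$, having strictly smaller leading term and the same $Y$-support, must vanish.
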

\begin{proof}
    If $ \beta^{\sharp}(x_{i}) $ is equal to $ v_{i}(X,t) $, then let $ d_{i} $ equal $ \deg_{t}(v_{i}(X,t)) $.  If $ d $ is equal to the maximum of $ d_{1},\dots,d_{n} $, then let $ q_{i}(Z,t_{0},t_{1}) $ equal $ v_{i}(z_{1},\dots,z_{n},t_{0}/t_{1})t_{1}^{d} $.  Let $ \{\phi_{i}\}_{i \in \mathbb{N}_{0}} $ be an appropriate collection of endomorphisms such that $ v_{i}(X,t) $ is equal to $ \sum_{j=0}^{d_{i}} \phi_{j}(x_{i}) t^{j} $.  Also let $ S $ be the set of $ i $ such that $ \deg_{t}(v_{i}(X,t))$ is equal to $ d $, and let $ U $ be the open sub-variety $ \left(\mathbf{V} \setminus \mathcal{V}(\langle \phi_{d_{i}}(x_{i}) \rangle_{i \in S})\right) $.  We may define $ \lambda^{\sharp}: k[W][Y] \to k[Z][t_{0},t_{1}] $ to be the ring homomorphism below:
    \begin{align*}
        \lambda^{\sharp}(y_{0}) &= z_{0}t_{1}^{d} \\
        \lambda^{\sharp}(y_{i}) &= q_{i}(Z,t_{0},t_{1}) \quad 1 \le i \le n \\
        \lambda^{\sharp}(w_{i}) &= z_{i} \quad 0 \le i \le n.
    \end{align*}
    We obtain a morphism of varieties $ \lambda: \mathbb{P}^{1}_{k} \times U \to U \times \mathbb{P}^{n}_{k} $ from the morphism $ \lambda^{\sharp} $.  By construction $ \lambda \mid_{D_{+}(t_{1}) \times U} = \gamma \mid_{\mathbb{G}_{a} \times U} $.

    Suppose that $ a=(a_{1},\dots,a_{n}) $ and $ b=(b_{1},\dots,b_{n}) $ are two points of $ U $ such that $ f_{i,J}(a) = f_{i,J}(b) $ for $ 1 \le i \le \ell $ and all $ J $.  Because $ a = 0 \ast a $, the following identities hold:
    \begin{align*}
        0 &= f_{i}(a,a) \\
        &= \sum_{J} f_{i,J}(a) \prod_{s=1}^{n} a_{s}^{j_{s}} \\
        &= \sum_{J} f_{i,J}(b) \prod_{s=1}^{n} a_{s}^{j_{s}} \\
        &= f_{i}(b,a).
    \end{align*}
    As a result, $ (b,a) \in \Gamma_{U} $.  Therefore, $ b \in \mathbb{G}_{a} \ast a $.

    If $ a \in U $ and $ t_{0} \in \mathbb{G}_{a} $, then
    \begin{align}
        \sum_{J} f_{i,J}(a) \prod_{s=1}^{n} a_{s}^{j_{s}} &= f_{i}(a,a), \notag \\
        &=0, \notag \\
        &= f_{i}(t_{0} \ast a,a), \notag \\
        &= \sum_{J} f_{i,J}(t_{0} \ast a) \prod_{s=1}^{n} a_{s}^{j_{s}}. \label{E:6}
    \end{align}
    The left hand side of ~\eqref{E:6} does not depend on $ t_{0} $ while the right hand side would depend on $ t_{0} $ if $ f_{i,J}(X) $ is not invariant.  Therefore $ f_{i,J}(X) $ is invariant.
\end{proof}
\begin{thm} \label{T:trivialLargePedForm}
    If $ \beta: \mathbb{G}_{a} \to \operatorname{GL}(\mathbf{V}) $ is a non-trivial, representation, then the large pedestal ideal of $ S_{k}(\mathbf{V}^{\ast}) $ is equal to zero if and only if: the dimension of $ \operatorname{soc}_{2}(\mathbf{V}^{\ast})/(\mathbf{V}^{\ast})^{\mathbb{G}_{a}} $ is equal to one, the length of the socle series of $ \mathbf{V}^{\ast} $ is two and the variance of any element of $ \operatorname{soc}_{2}(\mathbf{V}^{\ast}) \setminus (\mathbf{V}^{\ast})^{\mathbb{G}_{a}} $ is greater than two.  Moreover, if the large pedestal ideal of $ S_{k}(\mathbf{V}^{\ast}) $ is equal to zero, then the characteristic of $ k $ is greater than zero.
\end{thm}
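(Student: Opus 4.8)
The plan is to reduce the whole statement to a question about \emph{variance}. By Corollary~\ref{Cor:pairSubSpace}, an element $g\in S_k(\mathbf{V}^\ast)$ lies in a $c(t)$-pair $(g,h)$ for some additive $c(t)$ and some $h\in S_k(\mathbf{V}^\ast)^{\mathbb{G}_a}$ exactly when $\operatorname{var}(g)=2$, and in that case $h\neq 0$, since $h=0$ would force $\beta^\sharp(g)=g$, i.e. $g$ invariant with $\operatorname{var}(g)=1$. Hence $\mathfrak{P}_g(S_k(\mathbf{V}^\ast))=0$ if and only if $S_k(\mathbf{V}^\ast)$ has no element of variance two, and I prove the theorem in this form. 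For the ``moreover'' clause, if $\operatorname{char}k=0$ then a non-trivial $\beta$ corresponds to a non-zero locally nilpotent derivation $\delta$; choosing $v$ with $\delta(v)\neq 0$ and letting $a=\delta^{m-1}(v)$ where $m$ is least with $\delta^{m+1}(v)=0$ gives $\delta(a)\neq 0$, $\delta^2(a)=0$, so $\beta^\sharp(a)=a+t\,\delta(a)$ and $(a,\delta(a))$ is a $t$-pair with $\delta(a)\in S_k(\mathbf{V}^\ast)^{\mathbb{G}_a}\setminus\{0\}$; thus $\mathfrak{P}_g\neq 0$. Consequently $\mathfrak{P}_g=0$ forces $\operatorname{char}k=p>0$, and I assume $p>0$ below.

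\emph{Sufficiency.} Assume the socle series of $\mathbf{V}^\ast$ has length two, $\dim\!\big(\operatorname{soc}_2(\mathbf{V}^\ast)/(\mathbf{V}^\ast)^{\mathbb{G}_a}\big)=1$ with generator $z$, and $\operatorname{var}(z)>2$. Picking a basis $y_1,\dots,y_m$ of $(\mathbf{V}^\ast)^{\mathbb{G}_a}$ and adjoining $z$, the coordinate ring is $k[y_1,\dots,y_m,z]$, its invariants are $k[y_1,\dots,y_m]$ by Lemma~\ref{L:beginningEx}, and $\beta^\sharp(z)=z+\sum_j c_j(t)y_j$ with each $c_j$ additive (Lemma~\ref{L:gaPolys}); by hypothesis the vectors $(c_1(t_0),\dots,c_m(t_0))$ span a subspace of dimension $r\geq 2$, and after a linear change of the $y_j$ we may take $c_{r+1}=\dots=c_m=0$ and $c_1,\dots,c_r$ linearly independent. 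Given any $g=\sum_j g_j(y)z^j$ in a $c(t)$-pair one has $\delta(g)=\beta^\sharp(g)-g\in k[y][t]$; splitting off the part of $g$ whose $z$-exponents are not powers of $p$ and applying Corollary~\ref{Cor:expansion} exactly as in Example~\ref{EG:1}, that part must vanish, so $g=\sum_i g_{p^i}(y)z^{p^i}$ and $\delta(g)=\sum_{i=1}^r\big(g(y,c_i(t)y_i)-g_0(y)\big)$. Then the $\mathbb{G}_a$-translates of $g$ span, besides $g$, all of $\sum_{i=1}^r\operatorname{span}_k\{g(y,\mu\,y_i)-g_0(y):\mu\in k\}$, whose summands have pairwise disjoint monomial support in the variables $y_i$; if $g$ genuinely involves $z$ this space has dimension $\geq r$, so $\operatorname{var}(g)\geq r+1\geq 3$. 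Hence $S_k(\mathbf{V}^\ast)$ has no variance-two element and $\mathfrak{P}_g=0$.

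\emph{Necessity.} Assume $\mathfrak{P}_g=0$; fix a basis $y_1,\dots,y_m$ of $(\mathbf{V}^\ast)^{\mathbb{G}_a}$ inside an upper-triangular basis. If $w\in\operatorname{soc}_2(\mathbf{V}^\ast)\setminus(\mathbf{V}^\ast)^{\mathbb{G}_a}$ then $\beta^\sharp(w)=w+\sum_j c_j(t)y_j$ with the $c_j$ additive and not all zero (Lemma~\ref{L:gaPolys}); $w$ is not invariant, so $\operatorname{var}(w)\geq 2$, and $\operatorname{var}(w)=2$ is excluded, giving $\operatorname{var}(w)>2$ — the third condition. For the second, suppose the socle series had length $\geq 3$; then there is $w\in\operatorname{soc}_3\setminus\operatorname{soc}_2$ which, after reordering the basis so that $w$ lies directly above some $z\in\operatorname{soc}_2\setminus(\mathbf{V}^\ast)^{\mathbb{G}_a}$, satisfies $\beta^\sharp(w)=w+q(t)z+(\text{a term in }(\mathbf{V}^\ast)^{\mathbb{G}_a}[t])$ with $q$ additive and non-zero (Lemma~\ref{L:gaPolys}); combining suitable Frobenius powers of $w$ and $z$ with invariant coefficients — replacing $w$ by $w^{p^N}$ and subtracting an invariant multiple of a matching power $z^{p^j}$ so that the $z$-contribution cancels and the residual $t$-dependence collapses to a single additive polynomial — one produces a variance-two element, contradicting $\mathfrak{P}_g=0$. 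For the first condition, if $\dim(\operatorname{soc}_2/(\mathbf{V}^\ast)^{\mathbb{G}_a})\geq 2$, pick independent $z_1,z_2\in\operatorname{soc}_2/(\mathbf{V}^\ast)^{\mathbb{G}_a}$ with $\beta^\sharp(z_i)=z_i+\ell_i(t)$, $\ell_i\in(\mathbf{V}^\ast)^{\mathbb{G}_a}\otimes_k\mathfrak{O}$, and use the two-sided Euclidean structure of $\mathfrak{O}$ to align appropriate Frobenius twists of $\ell_1$ and $\ell_2$, after scaling by monomials in the $y_j$, into a single additive polynomial — the model being the computation behind representation~\eqref{E:89} — again producing a variance-two $g=\sum A_{i,a}(y)z_i^{p^a}$ and contradicting $\mathfrak{P}_g=0$.

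The main obstacle is precisely the last two steps: converting each structural failure (socle length exceeding two, or $\dim\operatorname{soc}_2/(\mathbf{V}^\ast)^{\mathbb{G}_a}$ exceeding one) into an \emph{explicit} variance-two element. The delicate points are choosing which $p$-power exponents of the generators to combine so that the non-invariant terms cancel exactly, and verifying that the required proportionality can always be realized in the non-commutative Ore ring $\mathfrak{O}$ of additive polynomials; by contrast the sufficiency direction and condition (C) of necessity are essentially the computation already performed in Example~\ref{EG:1}.
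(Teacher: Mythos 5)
Your reduction to the variance picture is sound, and the ``moreover'' clause via a local slice $a=\delta^{m-1}(v)$ in characteristic zero is fine and close to the paper's remark. The sufficiency direction (the three structural conditions force $\mathfrak{P}_g = 0$) is also essentially the computation the paper carries out, mirroring Example~\ref{EG:1}: split $g$ into the $p$-power part and the rest, kill the latter via Corollary~\ref{Cor:expansion}, then show the $p$-power part has variance at least three. That part of your proposal is correct.

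The genuine gap is in the necessity direction for conditions (1) and (2), and you half-acknowledge it yourself. When the socle series has length $\geq 3$, you take $w$ with $\beta^\sharp(w) = w + q(t)z + (\text{term in } (\mathbf{V}^\ast)^{\mathbb{G}_a}[t])$ and propose ``replacing $w$ by $w^{p^N}$ and subtracting an invariant multiple of a matching power $z^{p^j}$ so that the $z$-contribution cancels.'' This cannot work as stated: $\delta(w^{p^N}) = q(t)^{p^N} z^{p^N} + \cdots$ contains the monomial $z^{p^N}$, while $\delta\bigl(A(y)\, z^{p^j}\bigr) = A(y)\,\delta(z)^{p^j}$ lies in $k[y][t]$ and carries no $z$ at all. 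So no linear combination of Frobenius powers of $z$ with invariant coefficients can cancel the $z^{p^N}$-term coming from $w^{p^N}$, and the proposed $g$ has $\delta(g) \notin k[y][t]$, hence is never part of a $c(t)$-pair. The analogous step for condition (1) — aligning Frobenius twists of $\ell_1$ and $\ell_2$ in the Ore ring to build a variance-two $g=\sum A_{i,a}(y)z_i^{p^a}$ — is likewise unjustified: in general $\delta(g) = \sum_j\bigl(\cdots\bigr)y_j^{p^a}$ has to collapse to $c(t)h(y)$ for a single additive $c(t)$, and there is no apparent reason that can be arranged when both $\ell_1$ and $\ell_2$ individually have rank $\geq 2$. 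You are correct that these are the crux of the theorem, but no actual construction is supplied, and at least the socle-length step is impossible in the form you describe.

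The paper sidesteps the construction problem entirely with a dimension-and-separation argument. By Theorem~\ref{T:zeroLargePlinth}, $\mathfrak{P}_g = 0$ forces $S_k(\mathbf{V}^\ast)^{\mathbb{G}_a} = S_k\bigl((\mathbf{V}^\ast)^{\mathbb{G}_a}\bigr)$. By Theorem~\ref{T:graphSep}, invariant regular functions separate orbits on a dense open $U \subseteq \mathbf{V}$. If $n > m+1$ where $m = \dim(\mathbf{V}^\ast)^{\mathbb{G}_a}$, the fibres of $\mathbf{V} \to \operatorname{Spec}\bigl(S_k((\mathbf{V}^\ast)^{\mathbb{G}_a})\bigr)$ have dimension $\geq 2$, while $\mathbb{G}_a$-orbits have dimension $\leq 1$, so separation is impossible — contradiction. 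Hence $n = m+1$, which simultaneously gives socle length two and $\dim\bigl(\operatorname{soc}_2(\mathbf{V}^\ast)/(\mathbf{V}^\ast)^{\mathbb{G}_a}\bigr) = 1$, after which the variance condition follows from Corollary~\ref{Cor:pairSubSpace} exactly as you argue. This buys a complete proof without having to produce a single variance-two element; if you want to keep the constructive flavour of your approach, you would need to prove a genuine normal-form lemma showing how a variance-two element can always be manufactured in the two bad cases, and the $z^{p^N}$-cancellation obstruction above shows that any such lemma has to be more subtle than a Frobenius-twist bookkeeping argument.
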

\begin{proof}
    Assume that $ \mathfrak{P}_{g}(S_{k}(\mathbf{V}^{\ast})) $ is equal to zero.  By Theorem ~\ref{T:graphSep} (see page \pageref{T:graphSep}), it is possible to separate orbits using invariant functions on a dense open sub-variety of $ \mathbf{V} $.  By Theorem ~\ref{T:zeroLargePlinth} (see page \pageref{T:zeroLargePlinth}), the ring $ S_{k}(\mathbf{V}^{\ast})^{\mathbb{G}_{a}} $ is equal to $ S_{k}\left((\mathbf{V}^{\ast})^{\mathbb{G}_{a}}\right) $.  Let $ \{x_{1},\dots,x_{m}\} $ be a basis of $ (\mathbf{V}^{\ast})^{\mathbb{G}_{a}} $, and extend it to a basis $ \{x_{1},\dots,x_{n}\} $ of $ \mathbf{V}^{\ast} $.  If $ n>m+1 $, then the fibers of the morphism $ \mathbf{V} \to \operatorname{Spec}\left(S_{k}\left((\mathbf{V}^{\ast})\right)^{\mathbb{G}_{a}}\right) $ are at least two-dimensional.  So it is impossible to separate orbits, contradicting Theorem ~\ref{T:graphSep} (see page \pageref{T:graphSep}).  As a result, $ n $ is equal to $ m+1 $.  Moreover the variance of $ x_{m+1} $ is at least three, or else the large pedestal ideal is non-zero by Corollary ~\ref{Cor:pairSubSpace} (see page \pageref{Cor:pairSubSpace}).  It is also impossible for the variance of $ x_{m+1} $ to be greater than two if the characteristic of $ k $ is equal to zero.  Therefore, the characteristic of $ k $ is $ p>0 $ if the large pedestal ideal of $ k[X] $ is equal to zero.

    Assume that the dimension of $ \operatorname{soc}_{2}\left(\mathbf{V}^{\ast}/(\mathbf{V}^{\ast})\right)^{\mathbb{G}_{a}} $ is equal to one, the length of the socle series of $ \mathbf{V}^{\ast} $ is two, the characteristic of $ k $ is $ p>0 $, and the variance of any element of $ \operatorname{soc}_{2}(\mathbf{V}^{\ast})\setminus (\mathbf{V}^{\ast})^{\mathbb{G}_{a}} $ is greater than two.  If $ \mathbf{V}^{\ast} $ is $ n $-dimensional, then we may extend a basis $ \{x_{1},\dots,x_{n-1}\} $ of $ (\mathbf{V}^{\ast})^{\mathbb{G}_{a}} $ to a basis $ \{x_{1},\dots,x_{n}\} $ of $ \mathbf{V}^{\ast} $.  If $ \beta^{\sharp}(x_{n}) $ is equal to $ x_{n}+\sum_{i=1}^{n-1} c_{i}(t)x_{i} $, then
    \begin{align*}
        x_{n}+\sum_{i=1}^{n-1} c_{i}(t \otimes 1+1\otimes t)x_{i} &= (\operatorname{id}_{k[X]} \otimes \mu_{\mathbb{G}_{a}}^{\sharp})\left(x_{n}+\sum_{i=1}^{n-1} c_{i}(t)x_{i}\right) \\
         &= (\operatorname{id}_{k[X]} \otimes \mu_{\mathbb{G}_{a}}^{\sharp}) \circ \beta^{\sharp}(x_{n}) \\
         &= (\beta^{\sharp} \otimes \operatorname{id}_{k[t]}) \circ \beta^{\sharp}(x_{n}) \\
         &= (\beta^{\sharp} \otimes \operatorname{id}_{k[t]}) \left(x_{n}+\sum_{i=1}^{n-1} c_{i}(1 \otimes t) \otimes x_{i}\right) \\
         &= x_{n}+\sum_{i=1}^{n-1}\left(c_{i}(t \otimes 1)+c_{i}(1 \otimes t)\right)x_{i}.
    \end{align*}
    As a result, each $ c_{i}(t) $ is additive for $ 1 \le i \le n-1 $.  Also, note that $ k[X]^{\mathbb{G}_{a}} $ is equal to $ k[x_{1},\dots,x_{n-1}] $ by Lemma ~\ref{L:beginningEx} (see page \pageref{L:beginningEx}).  Let us denote $ k[x_{1},\dots,x_{n-1}] $ by $ k[\widetilde{X}] $ and an element $ h(X) $ of $ k[\widetilde{X}] $ by $ h(\widetilde{X}) $.

    If the large pedestal ideal of $ k[X] $ is not equal to zero, then there is a non-trivial $ c(t) $-pair $ (g(X),h(\widetilde{X})) $ for some additive polynomial $ c(t) $.  If $ g(X) $ is equal to $ \sum_{j=0}^{d} g_{j}(\widetilde{X})x_{n}^{j} $, then let $ S $ be the set below:
    \begin{equation*}
        S = \{ j \, \, \text{s.t.} \, \, g_{j}(\widetilde{X}) \ne 0 \, \, \text{and} \, \, j \, \, \text{is not a power of } p\}.
    \end{equation*} 
    Let $ G_{1}(X) $ and $ G_{2}(X) $ be the polynomials described below:
    \begin{align*}
        G_{1}(X) &:= \sum_{j \in S} g_{j}(\widetilde{X})x_{n}^{j} \\
        G_{2}(X) &:= \sum_{j \notin S} g_{j}(\widetilde{X})x_{n}^{j}.
    \end{align*}
    Because $ (g(X),h(\widetilde{X})) $ is a $ c(t) $-pair,
    \begin{align}
        \delta(G_{1}(X))+\delta(G_{2}(X))&= \delta(g(X)), \notag \\
        &= h(\widetilde{X})c(t). \label{E:57}
    \end{align}
    Since $ \delta(x_{n}^{j})-\delta(x_{n})^{j} \in \langle x_{n} \rangle k[X][t] $,
    \begin{align*}
        \langle x_{n} \rangle k[X][t] & \ni \sum_{j\in S} g_{j}(\widetilde{X})\left(\delta(x_{n}^{j})-\delta(x_{n})^{j}\right) \\
        &= \left(\sum_{j \in S} g_{j}(\widetilde{X})\delta(x_{n}^{j})\right)-\left(\sum_{j \in S} g_{j}(\widetilde{X})\delta(x_{n})^{j}\right) \\
        &=\delta(G_{1}(X))-G_{1}(\widetilde{X},\delta(x_{n})) \\
        &=h(\widetilde{X})c(t)-\delta(G_{2}(X))-G_{1}(\widetilde{X},\delta(x_{n})) \\
        &\in k[\widetilde{X}][t],
    \end{align*}
    it must be the case that $ G_{1}(X) $ is equal to $ G_{1}(\widetilde{X},\delta(x_{n})) $. 
    
    However, by Corollary ~\ref{Cor:expansion} (see page \pageref{Cor:expansion}) if $ H(X) $ is equal to $ \sum_{j=0}^{s} h_{j}(\widetilde{X})x_{n}^{j} $ and $ \delta(H(X)) = H(\widetilde{X},\delta(x_{n})) $, then $ h_{j}(\widetilde{X}) $ is equal to zero unless $ j $ is a power of $ p $.  So $ G_{1}(X) $ is equal to zero.  If $ \beta^{\sharp}(x_{n}) $ is equal to $ x_{n}+\sum_{j=1}^{n-1} c_{j}(t)x_{j} $, then $ c_{j}(t) $ is additive by our earlier calculations.  If $ G_{2}(X) $ is equal to $ \sum_{j=0}^{s} g_{j}(\widetilde{X}) x_{n}^{p^{j}} $, then
    \begin{align}
        \delta(g(X)) &= \delta(G_{2}(X)), \notag \\
        &= \delta(\sum_{j=0}^{s} g_{j}(\widetilde{X}) x_{n}^{p^{j}}), \notag \\
        &= \sum_{j=0}^{s} g_{j}(\widetilde{X}) \delta(x_{n}^{p^{j}}), \notag \\
        &= \sum_{j=0}^{s} g_{j}(\widetilde{X}) \delta(x_{n})^{p^{j}}, \notag \\
        &= \sum_{j=0}^{s} g_{j}(\widetilde{X}) \sum_{i=1}^{n-1} c_{i}(t)^{p^{j}} x_{i}^{p^{j}}, \notag \\
        &= \sum_{j=0}^{s} g_{j}(\widetilde{X}) \sum_{i=1}^{n-1} c_{i}(t)^{p^{j}} x_{i}^{p^{j}}, \notag \\
        &= \sum_{i=1}^{n-1} \sum_{j=0}^{s} g_{j}(\widetilde{X})\left(c_{i}(t)x_{i}\right)^{p^{j}}, \notag \\
        &= \sum_{i=1}^{n-1} G_{2}(\widetilde{X},c_{i}(t)x_{i}). \label{E:20}
    \end{align}
    So ~\eqref{E:20} shows that
    \begin{align*}
        \operatorname{var}(g(X)) & \ge \operatorname{var}(x_{N}) \\
        & \ge 3.
    \end{align*}
    However, this contradicts the fact that $ (g(X), h(\widetilde{X})) $ is a non-trivial, $ c(t) $-pair by Corollary ~\ref{Cor:pairSubSpace} (see page \pageref{Cor:pairSubSpace}).  Therefore the large pedestal ideal of $ k[X] $ is equal to zero.
\end{proof}
\section{Quasi-Principle Actions.}
Results from the previous section show that a representation whose large pedestal ideal is equal to zero is utterly uninteresting from the perspective of invariant theory and Hilbert's Fourteenth problem.  However, what about quasi-principle representations?  This section deals a lot with some of the tools that may be used for studying quasi-principle $ \mathbb{G}_{a} $-varieties.  If $ (g,1) $ is a principle pair for a $ \mathbb{G}_{a} $-variety $ \operatorname{Spec}(A) $ with action $ \beta $ over a field $ L $ of characteristic zero, then other authors have defined a map from $ A $ to $ A^{\mathbb{G}_{a}} $ which sends an element $ f $ to $ \beta^{\sharp}(f) \mid_{t = -g} $.  We prove that this map works in all characteristics.

If $ \operatorname{Spec}(A) $ is a quasi-principle variety with a quasi-principle $ b(t) $-pair $ (g,h) $, then we may assume that the action of $ \mathbb{G}_{a} $ on $ \operatorname{Spec}(A) $ is free.  The reason is that $ \mathbf{ker}(b(t)) $ acts trivially on $ \operatorname{Spec}(A) $ and $ (A^{\mathbb{G}_{a}/\mathbf{ker}(b(t))})^{\mathbf{ker}(b(t))} \cong A^{\mathbb{G}_{a}} $.
\begin{prop} \label{P:expandedSliceSep}
    If $ \operatorname{Spec}(A) $ is a $ \mathbb{G}_{a} $-variety with action $ \beta $, such that $ A $ is generated by
    $ z_{1},\dots,z_{n} $ as a $ k $-algebra, $ \beta^{\sharp}(z_{i}) $ is equal to $ v_{i}(Z,t) $, and $ (g,h) $ is a principle pair (see Definition ~\ref{D:aPair}), then the rational functions:
    \begin{equation} \label{E:82}
        f_{i}(Z) = v_{i}(z_{1},\dots,z_{n},-g/h) \quad 1 \le i \le n
    \end{equation}
    are invariant.
\end{prop}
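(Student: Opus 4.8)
The plan is to realize each $ f_i $ as the value at $ z_i $ of a single $ k $-algebra homomorphism $ \sigma\colon A_h \to A_h $ and to show that $ \sigma $ takes values in the invariants. First I would note that $ h \in A^{\mathbb{G}_a} $ is nonzero (otherwise $ g/h $ is undefined) and satisfies $ \beta^\sharp(h) = h $, so $ \beta^\sharp $ sends the unit $ h \in A_h $ to a unit and therefore extends uniquely to a $ k $-algebra homomorphism $ \beta^\sharp \colon A_h \to A_h[t] $; the co-associativity identity $ (\beta^\sharp \otimes \operatorname{id}_{k[t]}) \circ \beta^\sharp = (\operatorname{id}_{A_h} \otimes \mu_{\mathbb{G}_a}^\sharp) \circ \beta^\sharp $ remains valid after this localization. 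Combining the defining relation $ \beta^\sharp(g) = g + t h $ of the principle pair $ (g,h) $ with $ \beta^\sharp(h) = h $ gives the key identity $ \beta^\sharp(g/h) = g/h + t $ in $ A_h[t] $, hence $ \beta^\sharp(-g/h) = -g/h - t $.

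Next I would introduce the evaluation homomorphism $ \operatorname{ev}\colon A_h[t] \to A_h $ determined by $ t \mapsto -g/h $ and set $ \sigma := \operatorname{ev} \circ \beta^\sharp $, a composition of $ k $-algebra homomorphisms and hence itself a $ k $-algebra homomorphism. By construction $ \sigma(z_i) = \beta^\sharp(z_i)|_{t = -g/h} = v_i(z_1,\dots,z_n,-g/h) = f_i $, so it suffices to prove that $ \sigma(A_h) \subseteq (A_h)^{\mathbb{G}_a} $, i.e. that $ \beta^\sharp(\sigma(a)) = \sigma(a) $ for every $ a \in A_h $.

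The heart of the argument is to deduce $ \beta^\sharp \circ \sigma = \sigma $ from co-associativity by a substitution. Writing $ \beta^\sharp(a) = \sum_j \phi_j(a) t^j $ and letting $ t_1, t_2 $ denote the two tensor factors of $ k[t] $ as in Lemma~\ref{L:gaPolys}, co-associativity reads, after relabelling the factors if necessary, $ \sum_j \beta^\sharp(\phi_j(a))|_{t \mapsto t_1}\, t_2^{j} = \sum_j \phi_j(a)(t_1 + t_2)^j $ in $ A_h[t_1,t_2] $. Specializing $ t_1 \mapsto t $ and $ t_2 \mapsto -g/h - t $ (a legitimate substitution of elements of $ A_h[t] $ into the polynomial ring $ A_h[t_1,t_2] $), the right-hand side collapses to $ \sum_j \phi_j(a)(-g/h)^j = \sigma(a) $, while the left-hand side becomes $ \sum_j \beta^\sharp(\phi_j(a))(-g/h - t)^j $, which equals $ \beta^\sharp\!\bigl(\sum_j \phi_j(a)(-g/h)^j\bigr) = \beta^\sharp(\sigma(a)) $ since $ \beta^\sharp(-g/h) = -g/h - t $. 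Therefore $ \beta^\sharp(\sigma(a)) = \sigma(a) $, and taking $ a = z_i $ proves the proposition.

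I expect the only real obstacle to be the bookkeeping in the last step: one must check that the substitution into the co-associativity identity is carried out consistently on both sides and matches whichever convention is fixed for $ \beta^\sharp \otimes \operatorname{id}_{k[t]} $. This can be bypassed by a closed-point argument in the style of Proposition~\ref{P:earlyTrivialBundle}: for $ x \in D(h) $ and $ t_0 \in \mathbb{G}_a $ the principle-pair relation gives $ (g/h)(t_0 \ast x) = (g/h)(x) + t_0 $, so the morphism $ D(h) \to D(h) $ sending $ x $ to $ (-(g/h)(x)) \ast x $ is constant on $ \mathbb{G}_a $-orbits; since each $ f_i $ is the pullback of $ z_i $ along this morphism, each $ f_i $ is invariant. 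Either way, nothing beyond the principle-pair relation, the invariance of $ h $, and co-associativity of $ \beta^\sharp $ is needed.
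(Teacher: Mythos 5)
Your proposal is correct; in fact it offers two valid routes, one of which coincides with the paper's. Your closed‑point ``backup'' argument in the final paragraph is essentially verbatim the paper's own proof: the paper writes the cocycle relation $ v_{i}(x,s+w) = v_{i}(z_{1}(w\ast x),\dots,z_{n}(w\ast x),s) $, then substitutes $ s = (-g/h)(w\ast x) = (-g/h)(x)-w $ and simplifies to obtain $ f_{i}(w\ast x) = f_{i}(x) $. Your primary argument, by contrast, is a genuinely algebraic reformulation: you package the $ f_{i} $ as the values of a single $ k $-algebra homomorphism $ \sigma = \operatorname{ev}_{t\mapsto -g/h}\circ\beta^{\sharp} $ on $ A_{h} $, and you deduce $ \beta^{\sharp}\circ\sigma = \sigma $ directly from co-associativity by the $ A_{h} $-algebra specialization $ t_{1}\mapsto t $, $ t_{2}\mapsto -g/h - t $, using $ \beta^{\sharp}(-g/h) = -g/h - t $. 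The bookkeeping checks out: with $ \beta^{\sharp}(a)=\sum_{j}\phi_{j}(a)t^{j} $, the right side of co-associativity collapses to $ \sum_{j}\phi_{j}(a)(-g/h)^{j}=\sigma(a) $ and the left side to $ \sum_{j}\beta^{\sharp}(\phi_{j}(a))(-g/h-t)^{j}=\beta^{\sharp}(\sigma(a)) $. What this buys you over the paper's closed-point argument is that it is coordinate‑free and does not appeal to evaluation at closed points of $ \operatorname{Spec}(A) $ and $ \mathbb{G}_{a} $, so it is the statement one would want if one were working over an arbitrary base or with non‑reduced schemes; what the paper's version buys is transparency and a shorter chain of symbols, at the cost of relying tacitly on the fact that equality of regular functions can be checked on closed points. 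You also correctly flag the localization step (that $ \beta^{\sharp}(h)=h $ is a unit so $ \beta^{\sharp} $ extends to $ A_{h}\to A_{h}[t] $), which the paper leaves implicit. No gaps.
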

\begin{proof}
    Let us denote $ \beta^{\sharp}(z_{i}) $ by $ v_{i}(Z,t) $.  If $ x \in \operatorname{Spec}(A) $ and $ s,w \in \mathbb{G}_{a} $, then since $ \operatorname{Spec}(A) $ is a $ \mathbb{G}_{a} $-variety:
    \begin{align}
        v_{i}(x,s+w) &= z_{i}((s+w) \ast x), \notag \\
        &= v_{i}(z_{1}(w \ast x),\dots,z_{n}(w \ast x),s). \label{E:87}
    \end{align}
    Equating the left and right sides of ~\eqref{E:87}:
    \begin{equation} \label{E:88}
        v_{i}(x,s+w) = v_{i}(z_{1}(w \ast x),\dots,z_{n}(w \ast x),s).
    \end{equation}
    If we substitute $ (-g/h)(w \ast x) $ for $ s $, then:
    \begin{align*}
        f_{i}(w \ast x) &= v_{i}(z_{1}(w \ast x),\dots,z_{n}(w \ast x),(-g/h)(w \ast x)) \\
        &= v_{i}(z_{1}(w \ast x),\dots,z_{n}(w \ast x),(-g/h)(x)-w) \\
        &= v_{i}(x,(-g/h)(x)-w+w) \\
        &= v_{i}(x,(-g/h)(x)) \\
        &= f_{i}(x),
        \end{align*}
    where the jump from the second line to the third uses ~\eqref{E:88}.  Because $ f_{i}(Z) $ is constant on the orbits of $ \mathbb{G}_{a} $, it is in $ A_{h}^{\mathbb{G}_{a}} $.
\end{proof}
\begin{prop} \label{P:itsTheRing}
    Let $ \operatorname{Spec}(A) $ be a $ \mathbb{G}_{a} $-variety with action $ \beta $ and let \linebreak $ z_{1},\dots,z_{n} $ generate $ A $ as a $ k $-algebra.  If $ (g, h) $ is a principle pair (see Definition ~\ref{D:aPair} on page \pageref{D:aPair}), $ v_{i}(Z,t) $ is equal to $ \beta^{\sharp}(z_{i}) $, and $ f_{i}(Z) $ equals $ v_{i}(Z,-g/h) $, then
    \begin{equation*}
        k[f_{1}(Z),\dots,f_{n}(Z)]_{h} = (A_{h})^{\mathbb{G}_{a}}.
    \end{equation*}
    Moreover, if $ r(Z) $ is an element of $ A^{\mathbb{G}_{a}}_{h} $, then it is equal to $ r(f_{1}(Z),\dots,f_{n}(Z)) $.
\end{prop}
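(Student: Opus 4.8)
The plan is to repackage the substitution $t\mapsto -g/h$ as a single ring homomorphism and then identify its image. First I would define $\pi\colon A_h\to A_h$ by $\pi(a)=\beta^{\sharp}(a)\mid_{t=-g/h}$. This is legitimate: since $h\in A^{\mathbb{G}_{a}}$ we have $\beta^{\sharp}(h)=h$, so $\beta^{\sharp}$ extends to a ring homomorphism $A_h\to A_h[t]$, and composing with the evaluation homomorphism $A_h[t]\to A_h$ that sends $t$ to $-g/h$ yields a ring homomorphism $\pi$. By construction $\pi(z_i)=v_i(Z,-g/h)=f_i(Z)$ and $\pi(h)=h$. The one point here requiring a moment of care is precisely this extension of $\beta^{\sharp}$ to the localization, and it is exactly the invariance of $h$ that makes it work.

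Next I would compute $\operatorname{im}(\pi)$. Since $A_h$ is generated as a $k$-algebra by $z_1,\dots,z_n$ and $1/h$, its image under the ring homomorphism $\pi$ is generated by $\pi(z_1)=f_1(Z),\dots,\pi(z_n)=f_n(Z)$ together with $\pi(1/h)=1/h$; hence $\operatorname{im}(\pi)=k[f_1(Z),\dots,f_n(Z)][1/h]=k[f_1(Z),\dots,f_n(Z)]_h$.

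To pin down $\operatorname{im}(\pi)$ as the invariant ring I would use two inclusions. By Proposition~\ref{P:expandedSliceSep} each $f_i(Z)$ lies in $A_h^{\mathbb{G}_{a}}$, and $1/h$ is invariant, so $\operatorname{im}(\pi)\subseteq A_h^{\mathbb{G}_{a}}$. Conversely, $\pi$ is the identity on $A_h^{\mathbb{G}_{a}}$: if $r\in A_h^{\mathbb{G}_{a}}$ then $\beta^{\sharp}(r)=r$ as a constant polynomial in $t$, so $\pi(r)=r\mid_{t=-g/h}=r$; in particular $r\in\operatorname{im}(\pi)$. Combining these gives $A_h^{\mathbb{G}_{a}}=\operatorname{im}(\pi)=k[f_1(Z),\dots,f_n(Z)]_h$, which is the first assertion.

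For the ``moreover'' clause, given $r(Z)\in A_h^{\mathbb{G}_{a}}$ I would write $r(Z)=p(Z)/h^m$ with $p$ a polynomial, and apply $\pi$: using that $\pi$ fixes invariants and is a ring homomorphism with $\pi(z_i)=f_i(Z)$ and $\pi(h)=h$, we get $r(Z)=\pi(r(Z))=p(f_1(Z),\dots,f_n(Z))/h^m=r(f_1(Z),\dots,f_n(Z))$. I expect no real obstacle beyond the bookkeeping just described; the substantive content --- that the substitution $t=-g/h$ actually produces invariants --- has already been supplied by Proposition~\ref{P:expandedSliceSep}.
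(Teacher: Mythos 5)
Your proof is correct and follows the same essential idea as the paper's: substitute $t=-g/h$ into $\beta^{\sharp}(r)=r$ to realize an invariant $r$ as a polynomial expression in $f_1,\dots,f_n$ (localized at $h$), with Proposition~\ref{P:expandedSliceSep} supplying the reverse inclusion. Packaging the substitution as the ring homomorphism $\pi$ and observing that $\pi$ is the identity on $A_h^{\mathbb{G}_a}$ is merely a tidier bookkeeping of the same argument the paper gives more directly.
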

\begin{proof}
    If $ (g,h) $ is a principle pair, and $ r(Z) \in (A_{h})^{\mathbb{G}_{a}} $, then:
    \begin{align}
        r(Z) &= \beta^{\sharp}(r(Z)), \notag \\
        &= r(\beta^{\sharp}(z_{1}),\dots,\beta^{\sharp}(z_{n})), \notag \\
        &=r\left( v_{1}(Z,t),\dots,v_{n}(Z,t) \right). \label{E:36}
    \end{align}
    Since $ r(Z) $ does not depend on the value of $ t $ in ~\eqref{E:36}, we may substitute $ (-g/h) $ for $ t $ in ~\eqref{E:36}.  So,
    \begin{align*}
        r(Z) &= r(v_{1}(Z,-g/h),\dots,v_{n}(Z,-g/h)) \\
        &=r(f_{1}(Z),\dots,f_{n}(Z)).
    \end{align*}
    Therefore $ r(Z) \in k[f_{1}(Z),\dots,f_{n}(Z)]_{h} $, i.e.,
    \begin{equation*}
        k[f_{1}(Z),\dots,f_{n}(Z)]_{h} = A^{\mathbb{G}_{a}}_{h}.
    \end{equation*}
\end{proof}
\section{When The Large Pedestal Ideal is Non-Zero, but the Pedestal Ideal is Trivial.}
\begin{thm} \label{T:trivialPedForm}
    Let $ \beta: \mathbb{G}_{a} \to \operatorname{GL}(\mathbf{V}) $ be an $ n $-dimensional, linear representation.  The following statements are equivalent:
    \begin{itemize}
        \item[a)] the large pedestal ideal of $ S_{k}(\mathbf{V}^{\ast}) $ is non-zero, but $ \mathfrak{P}(S_{k}(\mathbf{V}^{\ast})) $ is equal to zero,
        \item[b)] the following statements hold:
        \begin{itemize}
            \item[i)] There is an upper triangular basis $ \{x_{1},\dots,x_{n}\} $ and an additive polynomial $ b(t) $ such that if $ \beta^{\sharp}(x_{i}) $ is equal to $ \sum_{j=1}^{i} q_{i,j}(t) x_{j} $, then \linebreak $ q_{i,j}(t) \in k[b(t)] $ for $ i<n $.
            \item[ii)] If $ i=n $, then $ q_{n,j}(t) \in k[b(t)] $ if $ x_{j} \notin (\mathbf{V}^{\ast})^{\mathbb{G}_{a}} $.  Otherwise, there is a polynomial $ s_{n,j}(z) $ with no constant term and an additive polynomial $ d_{j}(t) $ such that $ q_{n,j}(t) $ is equal to $ s_{n,j}(b(t))+d_{j}(t) $.  Moreover, \linebreak $ d_{j}(t) \in \mathfrak{O} \setminus \mathfrak{O} \langle b(t) \rangle $ whenever $ d_{j}(t) $ is non-zero.
            \item[iii)] If $ S $ is the set of $ j $ such that $ x_{j} \in (\mathbf{V}^{\ast})^{\mathbb{G}_{a}} $ and $ d_{j}(t) $ is non-zero, then the $ k $-span of $ \{d_{j}(t)\}_{j \in S} $ is greater than one,
            \item[iv)] There is a $ b(t) $-pair $ (g(X),h(X)) $.
        \end{itemize}
    \end{itemize}
    If the large pedestal ideal is non-zero, but the pedestal ideal is equal to zero, then the characteristic of $ k $ is greater than zero.
\end{thm}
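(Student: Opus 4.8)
The plan is to establish the equivalence $\mathrm{a)}\Leftrightarrow\mathrm{b)}$ by two implications and to read off the assertion about $\operatorname{char}(k)$ as a by-product; I dispose of that last point first, since it is self-contained. Suppose $\mathfrak{P}_{g}(S_{k}(\mathbf{V}^{\ast}))\neq 0$ while $\mathfrak{P}(S_{k}(\mathbf{V}^{\ast}))=0$, and suppose toward a contradiction that $\operatorname{char}(k)=0$. Then there is a $c(t)$-pair $(g,h)$ with $h\neq 0$ and $c(t)$ a nonzero additive polynomial. Over a field of characteristic zero every nonzero additive polynomial of $k[t]$ equals $\lambda t$ for some $\lambda\in k^{\ast}$, so $\beta^{\sharp}(\lambda^{-1}g)=\lambda^{-1}g+t\,h$, making $(\lambda^{-1}g,h)$ a principle pair. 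Since $\mathbf{ker}(t)$ is the trivial group scheme, it acts trivially on $\mathbf{V}$, so $(\lambda^{-1}g,h)$ is quasi-principle and $h\in\mathfrak{P}(S_{k}(\mathbf{V}^{\ast}))$, contradicting $\mathfrak{P}(S_{k}(\mathbf{V}^{\ast}))=0$. Hence $\operatorname{char}(k)=p>0$.

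For $\mathrm{b)}\Rightarrow\mathrm{a)}$: the (nontrivial) $b(t)$-pair of iv) shows $h(X)\in\mathfrak{P}_{g}(S_{k}(\mathbf{V}^{\ast}))$, so the large pedestal ideal is nonzero. For the pedestal ideal, I would first check, using ii)--iii), that $b(t)$ generates the fundamental ideal $\mathfrak{f}$: expanding $\delta(g_{0})$ of an arbitrary $g_{0}$ in the basis of i)--ii) via Lemma~\ref{L:deltaLemma} and Corollary~\ref{Cor:expansion}, every contribution from $x_{1},\dots,x_{n-1}$ lies in $k[x_{1},\dots,x_{n-1}][b(t)]$, whereas any genuine dependence on $x_{n}$ deposits, on monomials divisible by the invariants $x_{j}$ $(j\in S)$, coefficients carrying the $d_{j}(t)$; since these span a $k$-space of dimension at least two and sit on distinct monomials they cannot be made jointly divisible by a single additive $c(t)$ with $t$-free quotient, so a nontrivial $c(t)$-pair must have $g_{0}\in k[x_{1},\dots,x_{n-1}]$, and a comparison of $t$-degrees then forces $b(t)$ to right-divide $c(t)$. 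Thus $\mathfrak{f}=\mathfrak{O}\langle b(t)\rangle$. Now if there were a nontrivial quasi-principle $c(t)$-pair, then $c(t)\in\mathfrak{f}$ gives $\mathbf{ker}(b(t))\subseteq\mathbf{ker}(c(t))$, so $\mathbf{ker}(b(t))$ acts trivially on all of $\mathbf{V}$; but then it fixes $x_{n}$, which by the form of $\beta^{\sharp}(x_{n})$ in ii) requires $d_{j}(t)\in\mathfrak{O}\langle b(t)\rangle$ for every $j\in S$, contradicting ii) (with $S\neq\varnothing$ by iii)). Hence no nontrivial quasi-principle pair exists, so $\mathfrak{P}(S_{k}(\mathbf{V}^{\ast}))=0$; since $b(t)\neq t$, this direction also recovers $\operatorname{char}(k)>0$.

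For $\mathrm{a)}\Rightarrow\mathrm{b)}$: assume $\mathfrak{P}_{g}\neq 0$, $\mathfrak{P}=0$; then $\operatorname{char}(k)=p>0$ by the first paragraph. The fundamental ideal $\mathfrak{f}$ is nonzero; let $b(t)$ be its right-principal generator. By Corollary~\ref{Cor:fundamental} there is a $b(t)$-pair, and iterating Lemma~\ref{L:pairsLemma} over finitely many nontrivial pairs whose $c(t)$'s have Ore-gcd $b(t)$ --- using that $S_{k}(\mathbf{V}^{\ast})$ is a UFD, so the resulting $h$ is a nonzero product of irreducible factors --- yields a nontrivial $b(t)$-pair $(g(X),h(X))$, which is iv); this forces $b(t)\neq t$. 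Since $\mathfrak{P}=0$, no nontrivial pair is quasi-principle, so $\mathbf{ker}(b(t))$ does not act trivially on $\mathbf{V}$; put $\mathbf{W}=(\mathbf{V}^{\ast})^{\mathbf{ker}(b(t))}$, a proper $\mathbb{G}_{a}$-subrepresentation. Granting the claim $\operatorname{codim}\mathbf{W}=1$, choose an upper triangular basis $\{x_{1},\dots,x_{n-1}\}$ of $\mathbf{W}$ refining the socle series and any $x_{n}\notin\mathbf{W}$; since $\mathbf{ker}(b(t))$ acts trivially on $\mathbf{W}$, the socle-length induction from the proof of Lemma~\ref{L:principlePairEquiv} gives $\beta^{\sharp}(x_{i})\in k[x_{1},\dots,x_{n-1}][b(t)]$ for $i<n$, which is i). For ii), a downward induction on $j$ using the cocycle identity of Lemma~\ref{L:gaPolys} and i) shows a non-invariant $x_{j}$ receives only $b(t)$-controlled contributions, so $q_{n,j}(t)\in k[b(t)]$, while for invariant $x_{j}$ the same identity produces a decomposition $q_{n,j}(t)=s_{n,j}(b(t))+d_{j}(t)$ with $s_{n,j}$ constant-term-free and $d_{j}(t)$ additive, and $d_{j}(t)\notin\mathfrak{O}\langle b(t)\rangle$ when nonzero --- otherwise $\mathbf{ker}(b(t))$ would fix $x_{n}$ as well, contradicting $\operatorname{codim}\mathbf{W}=1$. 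Finally iii) holds because if $\dim_{k}\operatorname{span}_{k}\{d_{j}(t)\}_{j\in S}\leq 1$ one could replace $b(t)$ by $\mathfrak{O}(b(t),d_{j_{0}}(t))$ and absorb the single defect into a change of basis, producing either a strictly larger fundamental ideal or a nontrivial quasi-principle pair --- both impossible.

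The main obstacle is the claim $\operatorname{codim}\bigl((\mathbf{V}^{\ast})^{\mathbf{ker}(b(t))}\bigr)=1$. The bound $\operatorname{codim}\geq 1$ is immediate, since $\mathbf{ker}(b(t))$ acts nontrivially on $\mathbf{V}$. For $\operatorname{codim}\leq 1$ the plan is to argue by contradiction: if there were two linear forms $x_{n-1},x_{n}$ independent modulo $\mathbf{W}$, one passes to a low-dimensional $\mathbb{G}_{a}$-subquotient of $\mathbf{V}^{\ast}$ still having nonzero large pedestal ideal, applies Theorem~\ref{T:graphSep} and the structural analysis underlying Theorem~\ref{T:trivialLargePedForm} there, and extracts an additive polynomial in $\mathfrak{f}$ of degree strictly smaller than $\deg b(t)$, contradicting that $b(t)$ generates $\mathfrak{f}$. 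Making this reduction precise --- in particular controlling how the $\mathbf{ker}(b(t))$-fixed subspace behaves under passage to subquotients in positive characteristic --- is where I expect the real difficulty to lie; the remaining steps are the lengthy but routine bookkeeping with additive polynomials modulo $\mathfrak{O}\langle b(t)\rangle$ and with the cocycles of Lemma~\ref{L:gaPolys}.
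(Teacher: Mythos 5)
Your approach is genuinely different from the paper's, and while the overall strategy is plausible, there are several real gaps at the steps you yourself flag as delicate.

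For $\mathrm{b)}\Rightarrow\mathrm{a)}$ the paper does not go through the fundamental ideal at all. Instead it shows directly that the stabilizers $(\mathbb{G}_{a})_{x}=\mathbf{ker}(b(t))\cap\mathcal{V}\bigl(\langle\sum_{i}a_{i-1}d_{i}(t)\rangle\bigr)$ vary as $x$ moves over $D(h(X))$ precisely because the span of the $d_{j}(t)$ is at least two-dimensional, and a nontrivial quasi-principle $c(t)$-pair $(g',h')$ would force the stabilizer to be constantly equal to $\mathbf{ker}(c(t))$ on the dense open $D(h')$. Your route instead needs $\mathfrak{f}=\mathfrak{O}\langle b(t)\rangle$ as an intermediate, and the argument you give for it --- that the $d_{j}(t)$ ``sit on distinct monomials'' and so ``cannot be made jointly divisible'' --- is not a proof. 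It is not obvious how to rule out a $c(t)$-pair $(g_{0},h_{0})$ with $c(t)\notin\mathfrak{O}\langle b(t)\rangle$ where $g_{0}$ involves $x_{n}$ to higher powers: the analysis of $\delta(g_{0})$ modulo $\langle x_{n}\rangle$ used in the proofs of Example~\ref{EG:1} and Theorem~\ref{T:trivialLargePedForm} does not carry over directly when the socle length exceeds two, which it can here since $q_{i,j}(t)$ for $i<n$ need not vanish. Note that the conclusion you want from $\mathfrak{f}=\mathfrak{O}\langle b(t)\rangle$ does not use condition iii) at all, whereas the paper's stabilizer argument relies on iii) essentially; that asymmetry is a warning sign that the handwaved step is where iii) is actually being consumed, and therefore where the work is.

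For $\mathrm{a)}\Rightarrow\mathrm{b)}$ your framework via $\mathbf{W}=(\mathbf{V}^{\ast})^{\mathbf{ker}(b(t))}$ is attractive and more geometric than the paper's, but you concede that the codimension-one claim is the main obstacle, and the sketch you offer --- passing to a subquotient, invoking Theorem~\ref{T:graphSep} and ``the structural analysis underlying Theorem~\ref{T:trivialLargePedForm}'', and extracting a lower-degree element of $\mathfrak{f}$ --- is an outline of a hope, not an argument. The paper sidesteps the codimension question entirely: it shows (by a contradiction that feeds any extra $d(t)$-pair through the auxiliary action $\psi$ on $\operatorname{Spec}(k[s])\times\mathbf{V}$ and the ideal $\langle b(s)+g(X)/h(X)\rangle$ to produce an $\mathfrak{O}(d(t),b(t))$-pair, violating $\mathfrak{f}=\mathfrak{O}\langle b(t)\rangle$) that there is a representation $\gamma:\mathbb{G}_{a}\to\operatorname{GL}(\mathbf{W})$ with $\mathfrak{P}_{g}(S_{k}(\mathbf{W}^{\ast}))=0$ and the same $\mathbf{ker}(b(t))$-restriction as $\mathbf{V}^{\ast}$, then applies Theorem~\ref{T:trivialLargePedForm} to $\mathbf{W}$ to read off i), ii), iii), and finally uses $\psi$ again to eliminate the $d_{j}(t)$-terms on the non-invariant $x_{j}$. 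Your plan never constructs such an auxiliary representation and never controls the interaction between the non-invariant part of the socle and $x_{n}$; your condition iii) argument (``absorb the single defect into a change of basis'') is similarly only gestured at. So while your decomposition is a legitimate alternative program, the two pieces you flag as the hard parts --- $\mathfrak{f}=\mathfrak{O}\langle b(t)\rangle$ under b), and $\operatorname{codim}(\mathbf{V}^{\ast})^{\mathbf{ker}(b(t))}=1$ under a) --- are exactly where the paper spends most of its effort, and neither is closed in your writeup.
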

\begin{proof}
    Assume that a) holds.  Let $ \{x_{1},\dots,x_{n}\} $ be an upper triangular basis of $ \mathbf{V}^{\ast} $.  Since $ \mathfrak{P}_{g}(k[X]) \ne 0 $, pairs exist.  The Ore ring is a non-commutative Euclidean ideal domain, so the fundamental ideal $ \mathfrak{f} $ is principle.  Because the large pedestal ideal of $ k[X] $ is non-zero, there is some additive polynomial $ b(t) $ such that $ \mathfrak{f}_{k[X]} = \mathfrak{O} \langle b(t) \rangle $.  Recall the definition of the fundamental ideal in Definition ~\ref{D:freudenbergIdeal}.

    Let us denote restriction of the action of $ \mathbb{G}_{a} $ to $ \mathbf{ker}(b(t)) $ on a representation $ \mathbf{W} $ by $ \operatorname{res}^{\mathbb{G}_{a}}_{\mathbf{ker}(b(t))}(\mathbf{W}) $.

    We claim that there is a representation $ \mathbf{W} $ such that $ \mathfrak{P}_{g}(S_{k}(\mathbf{W}^{\ast})) $ is equal to zero and $ \operatorname{res}^{\mathbb{G}_{a}}_{\mathbf{ker}(b(t))}(\mathbf{W}^{\ast}) = \operatorname{res}^{\mathbb{G}_{a}}_{\mathbf{ker}(b(t))}(\mathbf{V}^{\ast}) $.  Suppose that this is not the case.  If this is so, then for any representation $ \gamma: \mathbb{G}_{a} \to \operatorname{GL}(\mathbf{W}) $ such that 
    \begin{equation*}
        \operatorname{res}^{\mathbb{G}_{a}}_{\mathbf{ker}(b(t))}(\mathbf{W}^{\ast}) = \operatorname{res}^{\mathbb{G}_{a}}_{\mathbf{ker}(b(t))}(\mathbf{V}^{\ast})
    \end{equation*}
    there is a $ d(t) $-pair $ (G(X),H(X)) $ for $ d(t) \notin \mathfrak{O}\langle b(t) \rangle $.

    Since $ (G(X),H(X)) $ is a $ d(t) $-pair under the co-action $ \gamma^{\sharp} $, there are polynomials $ q_{j}(z) \in \langle z \rangle k[z] $ and $ H_{j}(X) \in k[X] $ such that
    \begin{equation*}
        \beta^{\sharp}(G(X)) = G(X)+d(t)H(X)+\sum_{j=2}^{\ell} q_{j}(b(t)) H_{j}(X).
    \end{equation*}
    Since the fundamental ideal $ \mathfrak{f}_{k[X]} $ is generated by $ b(t) $, there is a non-trivial $ b(t) $-pair $ (g(X),h(X)) $ by Corollary ~\ref{Cor:fundamental}.  If $ \psi $ is the action on $ \operatorname{Spec}(k[s]) \times \mathbf{V} $ such that
    \begin{align*}
        \psi^{\sharp} & \mid_{k[X]} = \beta^{\sharp} \\
        \psi^{\sharp}(s) &= s-t,
    \end{align*}
    then the ideal $ \langle b(s)+g(X)/h(X) \rangle $ is $ \mathbb{G}_{a} $-stable.  Let $ G_{1}(X) $ be the rational function below:
    \begin{equation*}
        G_{1}(X) = G(X)+\sum_{j=2}^{\ell} q_{j}(-g(X)/h(X)) H_{j}(X).
    \end{equation*}
    If $ \xi \in \mathcal{V}(\langle b(s)+g(X)/h(X) \rangle) $, and we denote $ \beta^{\sharp}(G(X)) $ by $ u(X,t) $, then $ G_{1}(X) $ is also equal to $ u(X,\xi)-d(\xi)H(X) $.  Since $ \Psi $ is an action, if $ y \in D(h(X)) $, and $ w_{0},w_{1} \in \mathbb{G}_{a} $, then
    \begin{align}
        u(w_{1} \ast y,w_{0}) &= G(w_{0} \ast (w_{1} \ast y)), \notag \\
        &= G((w_{1}+w_{0}) \ast y), \notag \\
        &= u(y,w_{1}+w_{0}). \label{E:99}
    \end{align}
    Note that $ (\xi,-1) $ is a principle pair.  So, if $ x \in D(h(X)) $ and $ w \in \mathbb{G}_{a} $, then upon substituting $ w $ for $ w_{1} $, $ x $ for $ y $ and $ \xi(w \ast x) $ for $ w_{0} $ in ~\eqref{E:99}, the following calculations show that $ (G_{1}(X),H(X)) $ is a $ d(t) $-pair:
    \begin{align*}
        G_{1}(w \ast x) &= u(w \ast x,\xi(w \ast x))-d(\xi(w \ast x))H(w \ast x) \\
        &= u(x, \xi(w \ast x)+w)-d(\xi(w \ast x)) H(w \ast x) \\
        &= u(x,\xi(x)-w+w)-d(\xi(w \ast x))H(w \ast x) \\
        &= u(x, \xi(x))-d(\xi(w \ast x)) H(w \ast x) \\
        &= u(x, \xi(x))-d(\xi(w \ast x)) H(x) \\
        &= u(x, \xi(x))-(d(\xi(x)) -d(w))H(x) \\
        &= u(x, \xi(x))-d(\xi(x))H(x)+d(w)H(x) \\
        &= G_{1}(x)+d(w)H(x),
    \end{align*}
    where we go from the second to third line using ~\eqref{E:99}.
    There are polynomials $ (r(X),s(X)) $ such that $ r(X)/s(X) $ is equal to $ G_{1}(X)/H(X) $.  So, by Lemma ~\ref{L:pairsLemma} (see page \pageref{L:pairsLemma})
    \begin{align*}
        \mathfrak{f} &\supseteq \mathfrak{O}(d(t),b(t)) \\
        & \supsetneq \mathfrak{O} \langle b(t) \rangle.
    \end{align*}
    This contradicts the fact that $ \mathfrak{f} = \mathfrak{O} \langle b(t) \rangle $.  Therefore, there is a representation $ \gamma: \mathbb{G}_{a} \to \operatorname{GL}(\mathbf{W}) $ such that $ \mathfrak{P}_{g}(S_{k}(\mathbf{W}^{\ast})) $ is equal to zero and
    \begin{equation*}
        \operatorname{res}^{\mathbb{G}_{a}}_{\mathbf{ker}(b(t))}(\mathbf{W}^{\ast}) = \operatorname{res}^{\mathbb{G}_{a}}_{\mathbf{ker}(b(t))}(\mathbf{V}^{\ast}).
    \end{equation*}

    If we apply Theorem ~\ref{T:trivialLargePedForm} (see page ~\pageref{T:trivialLargePedForm}) to the representation $ \mathbf{W} $, then there is an upper triangular basis $ \{x_{1},\dots,x_{n}\} $ such that the socle of $ \mathbf{W}^{\ast} $ is generated by $ \{x_{1},\dots,x_{n-1}\} $ and $ \gamma^{\sharp}(x_{n}) $ is equal to $ x_{n}+\sum_{j=1}^{n-1} d_{j}(t)x_{j} $ where $ d_{j}(t) \notin \mathfrak{O} \langle b(t) \rangle $ if it is non-zero.  As a result,
    \begin{enumerate}
        \item if $ \beta^{\sharp}(x_{i}) $ is equal to $ \sum_{j=1}^{i} q_{i,j}(t)x_{j} $ for $ q_{i,j}(t) \in k[t] $, then $ q_{i,j}(t) \in k[b(t)] $ for $ i<n $,
        \item for $ 1 \le j <n $, there is a polynomial $ s_{n,j}(z) $ with no constant term and an additive polynomial such that $ q_{n,j}(t) $ is equal to $ s_{n,j}(b(t)) +d_{j}(t) $.
        \item if $ d_{j}(t) $ is non-zero, then $ d_{j}(t) \notin \mathfrak{O} \langle b(t) \rangle $.
        \item the variance $ \operatorname{var}^{\mathbf{ker}(b(t))}(x_{n}) \ge 3 $.
    \end{enumerate}

    We may assume that $ \{x_{1},\dots,x_{c}\} $ is a basis of $ (\mathbf{V}^{\ast})^{\mathbb{G}_{a}} $.  Once again let $ \psi $ be the action on $ \operatorname{Spec}(k[s]) \times \mathbf{V} $ such that
    \begin{align*}
        \psi^{\sharp} \mid_{k[X]} &= \beta^{\sharp} \\
        \psi^{\sharp}(s) &= s-t.
    \end{align*}
    By our previous work $ \psi $ acts on $ \mathcal{V}(\langle b(s)+g(X)/h(X) \rangle $.  Since $ \psi $ is a co-action
    \begin{align*}
        v_{n}(X,t_{1}+t_{2}) &= (\operatorname{id}_{k[X]} \otimes \mu_{\mathbb{G}_{a}}^{\sharp}) \circ \psi^{\sharp}(x_{n}) \\
        &= (\psi^{\sharp} \otimes \operatorname{id}_{t_{1}}) \circ \psi^{\sharp}(x_{n}) \\
        &= v_{n}(v_{1}(X,t_{2}),\dots,v_{n}(X,t_{2}),t_{1}).
    \end{align*}
    We may simplify this as
    \begin{equation} \label{E:100}
        v_{n}(X,t_{1}+t_{2}) = v_{n}(v_{1}(X,t_{2}),\dots,v_{n}(X,t_{2}),t_{1}).
    \end{equation}
    Let $ g_{1}(X) $ equal $ x_{n}+\sum_{j=1}^{n-1} s_{n,j}(-g(X)/h(X))x_{j} $.  If $ \xi $ is an element of $ \mathcal{V}(\langle b(s)+g(X)/h(X) \rangle) $, then $ g_{1}(X) $ is also equal to $ v_{n}(X,\xi)-\sum_{j=1}^{n} d_{j}(\xi)x_{j} $.  Upon substituting $ \xi $ in for $ t_{1} $ and $ t $ for $ t_{2} $ in ~\eqref{E:100} we obtain
    \begin{align*}
        \psi^{\sharp}(v_{n}(X,\xi)) &= v_{n}(v_{1}(X,t),\dots,v_{n}(X,t),\xi-t) \\
        &= v_{n}(X,\xi-t+t) \\
        &= v_{n}(X,\xi).
    \end{align*}
    Therefore,
    \begin{align*}
        \beta^{\sharp}(g_{1}(X)) &=\psi^{\sharp}(g_{1}(X)) \\
        &= \psi^{\sharp}\left(v_{n}(X,\xi)-\sum_{j=1}^{n-1} d_{j}(\xi)x_{j}\right) \\
        &= v_{n}(X,\xi)- \psi^{\sharp}\left(\sum_{j=1}^{n-1}d_{j}(\xi)x_{j}\right) \\
        &= v_{n}(X,\xi)-\sum_{j=1}^{n-1} d_{j}(\xi-t)\sum_{i=1}^{j} q_{j,i}(t)x_{i} \\
        &= v_{n}(X,\xi)-\sum_{j=1}^{n-1}d_{j}(\xi)\left(\sum_{i=1}^{j} q_{j,i}(t)x_{i}\right) +\sum_{j=1}^{n-1} d_{j}(t)\left(\sum_{i=1}^{j} q_{j,i}(t)x_{i}\right) \\
        &= g_{1}(X) -\sum_{j=1}^{n-1} d_{j}(\xi)\left(\sum_{i=1}^{j-1}q_{j,i}(t)x_{i}\right)+\sum_{j=1}^{n-1} d_{j}(t)\left(\sum_{i=1}^{j} q_{j,i}(t)x_{i}\right),
    \end{align*}
    which would mean that $ \beta^{\sharp}(g_{1}(X)) \notin k[X]_{h(X)}[t] $ if $ d_{j}(t) $ is not equal to zero for all $ c<j <n $.  As a result, $ q_{n,j}(t) \in k[b(t)] $ for $ j>c $.
      
    Let $ S $ be the set of $ 1 \le j \le c $ such that $ d_{j}(t) \ne 0 $.  Because $ \operatorname{var}^{\mathbf{ker}(b(t))}(x_{n}) $ the $ k $-span of $ \{d_{j}(t)\}_{j \in S} $ is greater than one.
    If conditions i), ii), iii) and iv) of b) hold, then $ \mathfrak{P}_{g}(k[X]) \ne 0 $ because there is a $ b(t) $-pair.  Let us assume that the dimension of the $ k $-span of $ \{d_{j}(t)\}_{j\in S} $ is $ m $.  After a change of basis we may assume that $ q_{i,j}(t) \in k[b(t)] $ unless $ i=n $ and $ 1 \le j \le m $.  Moreover we may assume that whenever $ 1 \le j \le m $, there is a polynomial $ s_{n,j}(z) $ with no constant term and an additive polynomial $ d_{j}(t) \notin \mathfrak{O}\langle b(t) \rangle $ such that $ q_{n,j}(t) $ is equal to $ s_{n,j}(b(t))+d_{j}(t) $.

    If $ x $ is a point of $ \mathbf{V}^{as} \cap D(h(X)) $, then $ (\mathbb{G}_{a})_{x} \subseteq \mathbf{ker}(b(t)) $ since any $ t_{0} \in (\mathbb{G}_{a})_{x} $ has the property that
    \begin{align*}
        g(x) &= g(t_{0} \ast x) \\
        &= g(x)+b(t_{0})h(x).
    \end{align*}
    The only other condition for a point $ t_{0} \in \mathbf{ker}(b(t)) $ to stabilize a point $ x $ equal to $ (a_{0},\dots,a_{m-1},\dots,a_{n-1}) $ is for $ \sum_{i=1}^{m} a_{i-1}d_{i}(t_{0}) $ to equal zero.  There is an open sub-variety of points $ y $ equal to $ (c_{0},\dots,c_{m-1},\dots,c_{n-1}) $ such that
    \begin{equation*}
        \sum_{i=1}^{m} a_{i-1}d_{i}(t) \ne \sum_{i=1}^{m} c_{i-1}d_{i}(t).
    \end{equation*}
    Since
    \begin{align*}
        (\mathbb{G}_{a})_{x} &= \mathbf{ker}(b(t)) \cap \mathcal{V}(\langle \sum_{i=1}^{m} a_{i-1} d_{i}(t) \rangle) \\
        (\mathbb{G}_{a})_{y} &= \mathbf{ker}(b(t)) \cap \mathcal{V}(\langle \sum_{i=1}^{m} c_{i-1} d_{i}(t) \rangle),
    \end{align*}
    there cannot be an open sub-variety $ \mathbf{V}^{as} $ of $ \mathbf{V} $ such that the stabilizer of any point is identical.  Therefore a quasi-principle pair cannot exist, i.e, $ \mathfrak{P}(k[X]) $ is equal to zero.  So a) and b) are equivalent conditions.
\end{proof}
Assume that $ \beta: \mathbb{G}_{a} \to \operatorname{GL}(\mathbf{V}) $ is an $ n $-dimensional, linear representation such that $ \mathfrak{P}_{g}(S_{k}(\mathbf{V}^{\ast})) $ is non-zero, but $ \mathfrak{P}(S_{k}(\mathbf{V}^{\ast})) $ is equal to zero.  By Theorem ~\ref{T:trivialPedForm}, there is a basis $ \{x_{1},\dots,x_{c}\} $ of $ (\mathbf{V}^{\ast})^{\mathbb{G}_{a}} $ and a basis $ \{x_{1},\dots,x_{n}\} $ of $ \mathbf{V}^{\ast} $ such that 
\begin{itemize}
    \item[i)] there is an additive polynomial $ b(t) $ such that if $ \beta^{\sharp}(x_{i})= \sum_{j=1}^{i} q_{i,j}(t)x_{j} $, then $ q_{i,j}(t) \in k[b(t)] $ whenever $ i<n $ or $ i=n $ and $ j>c $,
    \item[ii)] if $ i=n $ and $ 1 \le j \le c $, then there is a polynomial $ s_{n,j}(z) $ with no non-constant term and an additive polynomial $ d_{j}(t) $ such that $ q_{n,j}(t) $ is equal to $ s_{n,j}(b(t)) +d_{j}(t) $. Moreover, $ d_{j}(t) \notin \mathfrak{O} \langle b(t) \rangle $ if it is non-zero.
    \item[iii)] the dimension of the span of $ \{d_{j}(t)\}_{j=1}^{s} $ as a $ k $-vector space is at least two,
    \item[iv)] there is a $ b(t) $-pair $ (g(X),h(X)) $.
\end{itemize}
Let us denote $ \beta^{\sharp}(x_{i}) $ by $ v_{i}(X,b(t)) $ if $ i<n $ and $ \beta^{\sharp}(x_{n}) $ by $ v_{n}(X,b(t)) +u(X,t) $ where $ u(X,t) $ is equal to $ \sum_{j=1}^{c} d_{j}(t)x_{j} $ and $ v_{n}(X,b(t)) $ is equal to $ \beta^{\sharp}(x_{n})-u(X,t) $.  Let $ f_{i}(X) $ equal $ v_{i}(X,-g(X)/h(X)) $ and let $ f_{n}(X) $ equal $ v_{n}(X,-g(X)/h(X)) $.
\begin{lem} \label{L:startToNoPedInv}
    Let $ L $ be the splitting field of $ b(s)+g(X)/h(X) $ over $ k(X) $ and let $ L^{sep} $ be the separable closure of $ k(X) $ in $ L $.  If $ A $ is the integral closure of $ k[X] $ in $ L^{sep} $, then the variety $ \operatorname{Spec}(A) $ is a generically principle, $ \mathbb{G}_{a} $-variety and the natural morphism $ \tau: \operatorname{Spec}(A) \to \mathbf{V} $ is a generically finite, $ \mathbb{G}_{a} $-equivariant morphism.
\end{lem}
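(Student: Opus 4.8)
The plan is to exhibit $\operatorname{Spec}(A)$ explicitly as (the normalization of) a $\mathbb{G}_{a}$-stable hypersurface in $\mathbf{V} \times \mathbb{A}^{1}_{k}$ and to read the three assertions off that description. First I would recall the auxiliary action used in the proof of Theorem~\ref{T:trivialPedForm}: on $\mathbb{A}^{1}_{k} \times \mathbf{V}$, with coordinate $s$ on the line, let $\psi$ be the action with $\psi^{\sharp}\!\mid_{k[X]} = \beta^{\sharp}$ and $\psi^{\sharp}(s) = s - t$. Since $(g,h)$ is a $b(t)$-pair we have $\psi^{\sharp}(g/h) = g/h + b(t)$, hence $\psi^{\sharp}\bigl(b(s)h(X)+g(X)\bigr) = b(s-t)h + g + b(t)h = b(s)h+g$, so $\mathcal{V}(\langle b(s)h(X)+g(X)\rangle)$ is $\mathbb{G}_{a}$-stable. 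Adjoining one root $s_{0}$ of $b(s)+g(X)/h(X)$ to $k(X)$ already produces the splitting field $L = k(X)(s_{0})$, because any two roots differ by a reduced root of $b(t)$, which lies in $k$.

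Next I would pin down $L^{\mathrm{sep}}$. Write $b(t) = c(t^{p^{\ell}})$ in the Ore ring with $\ell$ maximal; then $c(t)$ has a nonzero linear coefficient $c_{0}$, so $c$ is separable and $c(t^{p^{\ell}})=b(t)$. From $b(s_{0}) = -g/h$ we get $c(u) = -g/h$ for $u := s_{0}^{p^{\ell}}$, and since the $X$-derivative of $c(X)+g/h$ is the unit $c_{0}$, that polynomial is separable; hence $u$ is separable over $k(X)$ and $k(X)(u) \subseteq L^{\mathrm{sep}}$. Conversely $s_{0}^{p^{\ell}} = u$ exhibits $L = k(X)(s_{0})$ as purely inseparable of degree $p^{\ell}$ over $k(X)(u)$, so in fact $L^{\mathrm{sep}} = k(X)(u)$.

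Consequently $\operatorname{Spec}(A)$ is the normalization of $\mathbf{V}$ in $k(X)(u)$, and over $D(h)$ it is (a connected component of) $\operatorname{Spec}\bigl(k[X]_{h}[u]/(c(u)+g/h)\bigr)$, a finite étale $k[X]_{h}$-algebra — the $u$-derivative of the relation is the unit $c_{0}$ — hence normal. The action lifts: putting $\psi^{\sharp}(u) = u - t^{p^{\ell}}$, forced by $u = s_{0}^{p^{\ell}}$, and using additivity of $c$ together with $c(t^{p^{\ell}})=b(t)$, one checks $\psi^{\sharp}(c(u)+g/h) = c(u) - c(t^{p^{\ell}}) + g/h + b(t) = c(u)+g/h$, so the defining ideal is $\mathbb{G}_{a}$-stable; this gives the action on $D(h)\cap\operatorname{Spec}(A)$, and by uniqueness of normalizations it extends to $\operatorname{Spec}(A)$, with $\tau$ tautologically $\mathbb{G}_{a}$-equivariant. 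Finally $\tau$ is generically finite — indeed finite — because $[L^{\mathrm{sep}}:k(X)]$ divides $\deg c$, so the integral closure $A$ is a finite $k[X]$-module.

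The remaining point, that $\operatorname{Spec}(A)$ is generically principle, is the step I expect to be the main obstacle. On $\operatorname{Spec}(A)$ the element $u$ satisfies $\psi^{\sharp}(u) = u + t^{p^{\ell}}\cdot(-1)$, so $(u,-1)$ is a $t^{p^{\ell}}$-pair; when $\ell = 0$ (i.e.\ $b(t)$ separable) this is already a principle pair and we are done. For $\ell \geq 1$ the task is to produce a genuine principle pair, and the natural route is to show that the $\mathbb{G}_{a}$-action on $\operatorname{Spec}(A)$ factors through the $\ell$-fold Frobenius of $\mathbb{G}_{a}$, equivalently (Lemma~\ref{L:principlePairEquiv}) that $\mathbf{ker}(t^{p^{\ell}})$ acts trivially, i.e.\ $\psi^{\sharp}$ maps $A_{h}$ into $A_{h}[t^{p^{\ell}}]$. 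For the generators $x_{1},\dots,x_{n-1}$, $1/h$ and $u$ this is immediate from condition (i) and the construction; the only genuine content is the summand $\sum_{j}d_{j}(t)x_{j}$ of $\psi^{\sharp}(x_{n})$, which must be reorganized — using the relation $c(u) = -g(X)/h(X)$ now available on $\operatorname{Spec}(A)$, exactly as the identity for $g_{1}(X)$ was used in the proof of Theorem~\ref{T:trivialPedForm} — into terms of $t$-degree divisible by $p^{\ell}$. Granting this, $\mathbb{G}_{a}/\mathbf{ker}(t^{p^{\ell}}) \cong \mathbb{G}_{a}$ via Frobenius, the reparametrization turns $(u,-1)$ into a principle pair, and $A^{\mathbb{G}_{a}} = A^{\mathbb{G}_{a}/\mathbf{ker}(t^{p^{\ell}})}$; hence $\operatorname{Spec}(A)$ is generically principle, which finishes the proof.
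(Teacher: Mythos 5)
Your analysis of $L^{\mathrm{sep}}$ is cleaner and more explicit than what the paper actually writes, and the portion of your argument handling the separable case ($\ell=0$, where $b(t)$ has nonzero linear term) is correct and essentially parallel to the paper's: the paper factors $b(s)+g/h$ over $A[s]$ into linear factors $\prod_{i}(\eta_{i}+\phi_{i}s)^{e_{i}}$ and observes that each $(\eta_{i},\phi_{i})$ is a principle pair, which amounts to taking your $u=s_{0}$ (up to adding a root of $b$ lying in $k$). The equivariance of $\tau$, its generic finiteness, and the $\mathbb{G}_a$-stability of $\langle b(s)h+g\rangle$ are handled the same way in both arguments.

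The genuine gap is exactly where you flag it. You need the action $\psi^{\sharp}$ on $A_{h}$ to factor through $t^{p^{\ell}}$, i.e.\ $\psi^{\sharp}(A_{h})\subseteq A_{h}[t^{p^{\ell}}]$, and you propose to absorb the troublesome summand $\sum_{j}d_{j}(t)x_{j}$ of $\psi^{\sharp}(x_{n})$ into $A_{h}[t^{p^{\ell}}]$ by exploiting $c(u)=-g/h$. This cannot work. Writing $d_{j}(t)=\sum_{i}d_{j,i}t^{p^{i}}$, one has $\sum_{j}d_{j}(t)x_{j}\in A_{h}[t^{p^{\ell}}]$ if and only if $\sum_{j}d_{j,i}x_{j}=0$ in $A_{h}$ for every $i<\ell$. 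But $A_{h}=k[X]_{h}[u]/\langle c(u)+g/h\rangle$ is a free $k[X]_{h}$-module (the relation is monic of positive degree in $u$), so $k[X]_{h}\hookrightarrow A_{h}$, and the invariant linear forms $x_{1},\dots,x_{c}$ remain linearly independent over $k$ inside $A_{h}$. Hence the condition forces $d_{j}(t)\in k[t^{p^{\ell}}]$ for every $j$ with $d_{j}\neq 0$. The hypotheses of Theorem~\ref{T:trivialPedForm} only say $d_{j}(t)\notin\mathfrak{O}\langle b(t)\rangle$; they do not say $d_{j}(t)\in k[t^{p^{\ell}}]$, and indeed when $b(t)=t^{p^{\ell}}$ the condition $d_{j}(t)\notin\mathfrak{O}\langle t^{p^{\ell}}\rangle$ forces $d_{j}(t)$ to have a nonzero coefficient on some $t^{p^{i}}$ with $i<\ell$, so the factorization through $t^{p^{\ell}}$ is impossible. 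The ``reorganization via $c(u)=-g/h$'' changes $x_{n}$ only by an element of $k[x_{1},\dots,x_{n-1},u]_{h}$ and therefore cannot remove the coefficients of the low $t$-powers multiplying the independent $x_{j}$.

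For comparison, the paper does not attempt your reduction to Frobenius. It instead asserts a factorization $b(s)+g/h=\prod_{i=1}^{p^{v}}(\eta_{i}+\phi_{i}s)^{e_{i}}$ with $\eta_{i},\phi_{i}\in A$, reads off $\psi^{\sharp}(\eta_{i})=\eta_{i}+t\phi_{i}$, and concludes $(\eta_{i},\phi_{i})$ is a principle pair. That route also uses, implicitly, that the roots $s_{0}+\zeta$ of $b(s)+g/h$ lie in $L^{\mathrm{sep}}=\operatorname{Frac}(A)$, which is true precisely when $b(t)$ is separable; when $b(t)=c(t^{p^{\ell}})$ with $\ell\geq 1$ the factorization over $A[s]$ is into factors $s^{p^{\ell}}-u-\xi_{j}$ (with $\xi_{j}$ roots of $c$), not into linear ones, and the paper's pair $(\eta_{i},\phi_{i})$ does not exist in $A$. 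So the discrepancy you ran into is not an artifact of your route: both arguments quietly assume $b(t)$ separable. If you want to rescue the lemma for inseparable $b(t)$ you would need either an explicit hypothesis that $b(t)$ is separable, or to replace $L^{\mathrm{sep}}$ by the full splitting field $L$ (at the cost of $\tau$ being inseparable), or to show that the hypotheses of Theorem~\ref{T:trivialPedForm} already force $d_{j}(t)\in k[t^{p^{\ell}}]$ — none of which your proposal establishes.
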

\begin{proof}
    Let $ \psi $ be the action of $ \mathbb{G}_{a} $ on $ \mathbf{V} \times \operatorname{Spec}(k[s]) $ such that
    \begin{align*}
        \psi^{\sharp} \mid_{k[X]} &= \beta^{\sharp} \\
        \psi^{\sharp}(s) &= s-t.
    \end{align*}
    Because
    \begin{align*}
        \psi^{\sharp}(b(s)+g(X)/h(X)) &= \psi^{\sharp}(b(s))+\psi^{\sharp}(g(X)/h(X)) \\
        &= b(s-t)+\psi^{\sharp}(g(X)/h(X)) \\
        &= b(s-t)+\left(g(X)/h(X)+b(t)\right) \\
        &= b(s)-b(t)+g(X)/h(X)+b(t) \\
        &= b(s)+g(X)/h(X),
    \end{align*}
    the ideal $ \langle b(s)+g(X)/h(X) \rangle k[X,s]_{h(X)} $ is $ \mathbb{G}_{a} $-stable.  Therefore, the action $ \psi $ is well defined on $ \operatorname{Spec}(A) $.  Since $ \psi^{\sharp} \mid_{k[X]} =\beta^{\sharp} $, the morphism $ \tau: \operatorname{Spec}(A) \to \mathbf{V} $ is a generically finite, $ \mathbb{G}_{a} $-equivariant morphism.
    
    Assume that the separable degree of $ b(s)+g(X)/h(X) $ as a polynomial in $ k(X)[s] $ is $ p^{v} $.  There are $ 2p^{v} $ elements $ \{\phi_{i},\eta_{i}\}_{i=1}^{p^{v}} $ of $ A $ and natural numbers $ e_{1},\dots,e_{p^{v}} \in \mathbb{N} $ such that
    \begin{equation*}
        b(s)+g(X)/h(X) = \prod_{i=1}^{p^{v}}(\eta_{i}+\phi_{i}s)^{e_{i}}.
    \end{equation*}
    The co-action $ \psi^{\sharp} $ leaves $ \phi_{i} $ invariant for $ 1 \le i \le p^{v} $ and sends $ \eta_{i} $ to $ \eta_{i}+t\phi_{i} $.  Because $ (\eta_{i},\phi_{i}) $ is a principle pair for any $ 1 \le i \le p^{v} $, the variety $ \operatorname{Spec}(A) $ is generically principle.
\end{proof}
\begin{thm}
    Let us adopt the conventions of Lemma ~\ref{L:startToNoPedInv}.  Let $ H $ be the Galois group of $ L^{sep} $ over $ k(X) $ and let $ r_{i} $ equal $ f_{n}(X)+ u_{n}(X,-\eta_{i}/\phi_{i}) $ for $ 1\le i\le p^{v} $.  The ring $ k[X]_{h(X)}^{\mathbb{G}_{a}} $ is equal to
    \begin{align*}
        k[f_{1}(X),\dots,f_{n-1}(X),r_{1},\dots,r_{p^{c}}]^{H}.
    \end{align*}
\end{thm}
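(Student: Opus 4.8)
The plan is to transport the computation to the generically principle $\mathbb{G}_{a}$-variety $\operatorname{Spec}(A)$ produced by Lemma~\ref{L:startToNoPedInv}, compute its ring of invariants there via the slice substitution of Proposition~\ref{P:itsTheRing}, and then descend along the Galois group $H$. The descent step supplies the framework: since $L^{sep}$ is the separable closure of $k(X)$ inside the normal splitting field $L$, the extension $L^{sep}/k(X)$ is Galois with group $H$, and because $k[X]$ (hence $k[X]_{h(X)}$) is integrally closed, $\left(A_{h(X)}\right)^{H} = k[X]_{h(X)}$. First I would check that the action $\psi$ of $\mathbb{G}_{a}$ on $\operatorname{Spec}(A)$ commutes with $H$: $\psi^{\sharp}$ fixes each $\beta^{\sharp}(x_{i})\in k[X][t]$, which is $H$-invariant, and it sends each root $-\eta_{i}/\phi_{i}$ of $b(s)+g(X)/h(X)$ to $-\eta_{i}/\phi_{i}-t$, an operation that commutes with the permutation of roots by $H$. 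Consequently $H$ preserves $A_{h(X)}^{\mathbb{G}_{a}}$ and $\left(A_{h(X)}^{\mathbb{G}_{a}}\right)^{H} = \left(A_{h(X)}^{H}\right)^{\mathbb{G}_{a}} = k[X]_{h(X)}^{\mathbb{G}_{a}}$, so it suffices to identify $A_{h(X)}^{\mathbb{G}_{a}}$ and to verify that the proposed generating set is $H$-stable.

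Next I would compute $A_{h(X)}^{\mathbb{G}_{a}}$. By Lemma~\ref{L:startToNoPedInv} the pair $(\eta_{1},\phi_{1})$ is a principle pair on $\operatorname{Spec}(A_{h(X)})$, and $\phi_{1}$ is a unit there: comparing leading coefficients in $s$ in the factorization $b(s)+g(X)/h(X)=\prod_{i}(\eta_{i}+\phi_{i}s)^{e_{i}}$ shows $\prod_{i}\phi_{i}^{e_{i}}$ is a nonzero scalar. Writing $A_{h(X)}=k[z_{1},\dots,z_{N}]$ with the $z_{i}$ including $x_{1},\dots,x_{n}$, $h(X)^{-1}$, and the root $\beta_{1}:=-\eta_{1}/\phi_{1}$, Proposition~\ref{P:itsTheRing} applied to $(\eta_{1},\phi_{1})$ gives $A_{h(X)}^{\mathbb{G}_{a}}=k\bigl[\psi^{\sharp}(z_{1})\!\mid_{t=\beta_{1}},\dots,\psi^{\sharp}(z_{N})\!\mid_{t=\beta_{1}}\bigr]_{h(X)}$. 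Since $\psi^{\sharp}(x_{i})=v_{i}(X,b(t))$ and $b(\beta_{1})=-g(X)/h(X)$, this substitution carries $x_{i}$ to $f_{i}(X)$ for $i<n$, carries $x_{n}$ to $f_{n}(X)+u(X,\beta_{1})=r_{1}$, and carries $\beta_{1}$ to $\beta_{1}-t$, i.e. to $0$. Every other generator lies in $k(X)(\beta_{1})$, hence is a polynomial in $x_{1},\dots,x_{n}$, $h(X)^{-1}$ and $\beta_{1}$, so its substitution lies in $k[f_{1}(X),\dots,f_{n-1}(X),r_{1}]_{h(X)}$. Therefore $A_{h(X)}^{\mathbb{G}_{a}}=k[f_{1}(X),\dots,f_{n-1}(X),r_{1}]_{h(X)}$.

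Finally I would account for the remaining $r_{i}$ and the Galois action. Any two distinct roots of $b(s)+g(X)/h(X)$ differ by a $k$-point of $\mathbf{ker}(b(t))$, which lies in $k$ because $k$ is algebraically closed; so $-\eta_{i}/\phi_{i}=\beta_{1}+\zeta_{i}$ with $\zeta_{i}\in k$, and additivity of the $d_{j}$ yields $r_{i}-r_{1}=\sum_{j=1}^{c}d_{j}(\zeta_{i})x_{j}$, a $k$-linear combination of $x_{1},\dots,x_{c}$. Each $x_{j}$ with $j\le c$ is $\mathbb{G}_{a}$-invariant, hence equals $f_{j}(X)$; thus $r_{i}\in k[f_{1}(X),\dots,f_{n-1}(X),r_{1}]_{h(X)}$ and, conversely, $r_{1}$ belongs to $k[f_{1}(X),\dots,f_{n-1}(X),r_{1},\dots,r_{p^{v}}]_{h(X)}$, so $A_{h(X)}^{\mathbb{G}_{a}}=k[f_{1}(X),\dots,f_{n-1}(X),r_{1},\dots,r_{p^{v}}]_{h(X)}$. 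Moreover $f_{1}(X),\dots,f_{n-1}(X)\in k[X]_{h(X)}$ are $H$-fixed while each $\sigma\in H$ permutes the roots $-\eta_{i}/\phi_{i}$ and fixes $f_{n}(X)$, hence permutes $r_{1},\dots,r_{p^{v}}$; so the generating set is $H$-stable and taking $H$-invariants commutes with inverting the $H$-fixed element $h(X)$. Combined with the descent step this gives $k[X]_{h(X)}^{\mathbb{G}_{a}}=k[f_{1}(X),\dots,f_{n-1}(X),r_{1},\dots,r_{p^{v}}]^{H}$.

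The computational core is the collapse $r_{i}-r_{1}\in\langle x_{1},\dots,x_{c}\rangle_{k}$, which lets the whole invariant ring be generated using only $r_{1}$ together with the (already invariant) coordinates $f_{1},\dots,f_{n-1}$, the other $r_{i}$ being needed solely to obtain an $H$-stable generating set; this is immediate from additivity of the $d_{j}$. The genuinely careful, though essentially formal, points will be verifying that the $\mathbb{G}_{a}$- and $H$-actions commute, that $\left(A_{h(X)}\right)^{H}=k[X]_{h(X)}$, and that Proposition~\ref{P:itsTheRing} may be applied to a full set of $k$-algebra generators of $A_{h(X)}$ — for which one uses that $A_{h(X)}$ is a finitely generated $k$-algebra with every generator lying in $k(X)(\beta_{1})$.
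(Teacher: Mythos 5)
Your proof follows essentially the same strategy as the paper's: pass to the auxiliary generically principle variety $\operatorname{Spec}(A)$ from Lemma~\ref{L:startToNoPedInv}, apply the slice substitution of Proposition~\ref{P:itsTheRing}, and recover $k[X]_{h(X)}^{\mathbb{G}_{a}}$ by Galois descent along $H$. There is, however, a real organizational difference, and it creates a gap. You try to \emph{compute} $A_{h(X)}^{\mathbb{G}_{a}}$ outright, as the subring $k[f_{1},\dots,f_{n-1},r_{1}]_{h(X)}$, and only then take $H$-invariants. To make this work you assert that every generator of $A_{h(X)}$ lies in $k[x_{1},\dots,x_{n},h^{-1},\beta_{1}]$, i.e.\ that $A_{h(X)}=k[X]_{h(X)}[\beta_{1}]$. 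That is not automatic: $A$ is the \emph{integral closure} of $k[X]$ in $L^{sep}$, and $k[X]_{h(X)}[\beta_{1}]$ need not be normal, so it can be strictly smaller than $A_{h(X)}$. If $A_{h(X)}$ has generators outside $k[X]_{h(X)}[\beta_{1}]$ (and in particular the $\phi_{i}$, which your own leading-coefficient argument only shows to be units, not necessarily scalars), their slices could fall outside $k[f_{1},\dots,f_{n-1},r_{1}]_{h(X)}$, and your computed $A_{h(X)}^{\mathbb{G}_{a}}$ could be too small — which would make the inclusion $k[X]_{h(X)}^{\mathbb{G}_{a}}\subseteq k[f_{1},\dots,f_{n-1},r_{1},\dots,r_{p^{v}}]^{H}$ unjustified.

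The paper avoids ever pinning down all of $A_{h(X)}^{\mathbb{G}_{a}}$. For the forward inclusion it takes an arbitrary $u(X)\in k[X]_{h(X)}^{\mathbb{G}_{a}}$, applies $\beta^{\sharp}$ and substitutes $t=-\eta_{i}/\phi_{i}$ to get $u(X)=u\bigl(f_{1}(X),\dots,f_{n-1}(X),r_{i}\bigr)$ directly, then observes that $u(X)$, being in $k(X)$, is $H$-fixed. This sidesteps the normality issue entirely: it never needs a generating set for $A_{h(X)}$, only that the substitution is a ring homomorphism into $A_{h(X)}^{\mathbb{G}_{a}}$. For the reverse inclusion the paper does invoke an explicit generating set for $A$, so it is not immune to the same kind of concern, and its final chain of equalities is itself a little loose (it writes $A_{h(X)}^{\mathbb{G}_{a}}\cap A^{H}$ where $A_{h(X)}^{\mathbb{G}_{a}}\cap A_{h(X)}^{H}$ is meant); but the structural point stands: the paper's forward inclusion is cheaper than yours. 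On the positive side, your explicit verification that the $\mathbb{G}_{a}$- and $H$-actions commute, and the clean statement of Galois descent $\bigl(A_{h(X)}\bigr)^{H}=k[X]_{h(X)}$, are sharper than what the paper writes; and your observation that $r_{i}-r_{1}=\sum_{j=1}^{c}d_{j}(\zeta_{i})x_{j}$ is a correct and pretty fact, though it is not needed and does not appear in the paper. To repair your argument, replace step~4 by the paper's direct substitution applied to an arbitrary $u(X)\in k[X]_{h(X)}^{\mathbb{G}_{a}}$ and keep everything else.
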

\begin{proof}
    Because $ A $ is equal to $ k[x_{1},\dots,x_{n},\phi_{i},\eta_{i}]_{i=1}^{p^{v}} $, the ring $ A_{\phi_{i}}^{\mathbb{G}_{a}} $ is equal to
    \begin{equation*}
        A_{\phi_{i}}^{\mathbb{G}_{a}} = \left(k[f_{1}(X),\dots,f_{n-1}(X),r_{i}, \eta_{j} \phi_{i}-\eta_{i}\phi_{j}]_{j\ne i}\right)_{\phi_{i}},
    \end{equation*}
    by Proposition ~\ref{P:itsTheRing}.

    Observe that by our construction the ring $ A^{H} \subseteq k[X] $.  Let $ u(X) $ be an element of $ k[X]_{h(X)}^{\mathbb{G}_{a}} $. Because $ \beta^{\sharp} $ is an action
    \begin{align}
        u(X) &= \beta^{\sharp}(u(X)),\notag \\
        &= u\left(v_{1}(X,b(t)),\dots,v_{n-1}(X,b(t)),v_{n}(X,b(t))+u_{n}(X,t)\right) \label{E:95}
    \end{align}
    The left hand side of ~\eqref{E:95} does not depend on $ t $, so we may substitute $ -\eta_{i}/\phi_{i} $ for $ t $. If we do so, then
    \begin{align*}
        u(X)&= u(f_{1}(X),\dots,f_{n-1}(X),r_{i}) \\
        &\in k[f_{1}(X),\dots,f_{n-1}(X),r_{1},\dots,r_{p^{v}}]^{H}.
    \end{align*}
    Therefore,
    \begin{equation*}
        k[X]^{\mathbb{G}_{a}}_{h(X)} \subseteq k[f_{1}(X),\dots,f_{n-1}(X),r_{1},\dots,r_{p^{v}}]^{H}.
    \end{equation*}
    Observe that $ r_{1},\dots,r_{p^{c}} $ are all invariant so,
    \begin{align*}
        k[f_{1}(X),\dots,f_{n-1}(X),r_{1},\dots,r_{p^{v}}]^{H} &\subseteq \left(A_{\phi_{1}\cdots \phi_{p^{v}}}^{\mathbb{G}_{a}}\right)^{H} \\
        &= (A_{h(X)}^{\mathbb{G}_{a}})^{H} \\
        & \subseteq A_{h(X)}^{\mathbb{G}_{a}} \cap A^{H} \\
        &= A_{h(X)}^{\mathbb{G}_{a}} \cap k[X] \\
        &=k[X]_{h(X)}^{\mathbb{G}_{a}}.
    \end{align*}.
    
    Therefore,
    \begin{equation*}
        k[X]_{h(X)}^{\mathbb{G}_{a}} = k[f_{1}(X),\dots,f_{n-1}(X),r_{1},\dots,r_{p^{v}}]^{H}.
    \end{equation*}
\end{proof} 
\bibliographystyle{amsplain}
\bibliography{TheoryOfGAActions}
\end{document}